\theoremstyle{plain}
\newtheorem{teo}{Theorem}[section]
\newtheorem{prop}[teo]{Proposition}
\newtheorem{lem}[teo]{Lemma}
\newtheorem{claim}[teo]{Claim}
\newtheorem{rem}[teo]{Remark}
\newtheorem{defin}[teo]{Definition}
\newtheorem{subclaim}[teo]{Subclaim}
\newtheorem{open}[teo]{Open Problem}
\newcommand{\system}[1]{\mbox{\fontfamily{cmss}\fontshape{n}\fontseries{m}\selectfont#1}}
\newcommand{\ZF}{\system{ZF}}
\newcommand{\ZFC}{\system{ZFC}}
\newcommand{\AC}{\system{AC}}
\newcommand{\GCH}{\system{GCH}}
\newcommand{\DC}{\system{DC}}
\newcommand{\AD}{\system{AD}}
\DeclareMathOperator{\cof}{cof}
\DeclareMathOperator{\crt}{crt}
\DeclareMathOperator{\OD}{OD}
\DeclareMathOperator{\HOD}{HOD}
\DeclareMathOperator{\Ord}{Ord}
\DeclareMathOperator{\Ult}{Ult}
\DeclareMathOperator{\dom}{dom}
\DeclareMathOperator{\Fml}{Fml}
\title{Generic I0 at $\aleph_\omega$}
\author{Vincenzo Dimonte\footnote{\emph{E-mail address:} \texttt{vincenzo.dimonte@gmail.com}\newline 
\textsc{Kurt G\"{o}del Research Center}, University of Vienna, W\"{a}hringer Stra\ss e 25, A-1090 Vienna, Austria\newline 
\emph{Current address}: \textsc{Universit\`{a} degli Studi di Udine}, Dipartimento di Scienze Matematiche, Informatiche e Fisiche, via delle Scienze, 206, 33100 Udine, Italy}}
\begin{document}

\maketitle

\begin{abstract}

In this paper it is introduced a generic large cardinal akin to I0, and the consequences of $\aleph_\omega$ being such a generic large cardinal. In this case $\aleph_\omega$ is J\'{o}nsson, and in a choiceless inner model many properties hold that are in contrast with PCF in ZFC.

\emph{Keywords}: Generic large cardinal, elementary embedding, J\'{o}nsson cardinal, $\omega$-strongly measurable cardinal, I0.

\emph{2010 Mathematics Subject Classifications}: 03E05, 03E35, 03E55 (03E45)

\end{abstract}

 \section{Introduction}

  What are generic large cardinals? As there is no specific definition of the notion of large cardinal, we can expect the same for generic large cardinals. Yet one can identify some vague pattern: All the large cardinal axioms of a certain strength postulate the existence of an elementary embedding of the form $j:V\prec M$, where $M$ is an inner model of $V$ with some closure properties, and the relevant large cardinal is the critical point of the embedding (or, more rarely, the supremum of critical points of such embeddings); therefore we call generic large cardinals the critical points of elementary embeddings of the form $j:V\prec M\subseteq V[G]$, where $M$ is an inner model of some generic extension of $V$. We can date back the origin of generic large cardinals to the introduction by Solovay of the generic ultrapower, and then to the definition by Jech and Prikry in \cite{JechPrikry} of the precipitous ideal, whose generic ultrapower gives exactly the situation described above. Of course, the same blueprint of a generic large cardinal can generate axioms of wildly different consistency strength, depending how ``close'' is $V[G]$ from $V$ (for example whether class forcing is permitted or not), how large (or small) we want it to be, and even considering variations that stray from the pattern but are conceptually close.
	
	The advantage of generic large cardinals embeddings is that their critical point can be small (for example $\omega_1$), therefore they are a tool to transfer properties of large cardinal to ``accessible'' cardinals. This works best for model-theoretical combinatorial properties, as the elementary embedding yields some reflection structure. For example:
	
	\begin{defin}
	 Given $\gamma_0,\gamma_1,\eta_0,\eta_1$ cardinals, we write $(\gamma_0,\gamma_1)\twoheadrightarrow (\eta_0,\eta_1)$ if the following holds: Given any language ${\cal L}$ that contains, possibly among other, a distinguished relation symbol, every model ${\cal U}=(U,R,\dots)$ in ${\cal L}$ such that $|U|=\gamma_0$ and $|R|=\gamma_1$ has an elementary submodel ${\cal V}=(V,S,\dots)$ such that $|V|=\eta_0$ and $|S|=\eta_1$.
	\end{defin}
	
	For example, Chang's Conjecture is $(\aleph_2,\aleph_1)\twoheadrightarrow(\aleph_1,\aleph_0)$. Moreover, a possible variation is to consider in the definition two, three, etc\dots distinguished relation symbols. 
	
	Such arrow structure is common in very large cardinals. As a reference for all the large cardinal notions that appear in this paper, we suggest Chapter 24 in \cite{Kanamori}.
	
	\begin{defin}
	 A cardinal $\kappa$ is \emph{huge} iff there exists $j:V\prec M$ such that $\kappa$ is the critical point of $j$ and $M^{j(\kappa)}\subseteq M$. 
	 
	 A cardinal $\kappa$ is \emph{$n$-huge} iff there exists $j:V\prec M$ such that $\kappa$ is the critical point of $j$ and $M^{\kappa_n}\subseteq M$, where $\kappa_0=\kappa$ and $\kappa_{n+1}=j(\kappa_n)$. 
	\end{defin}
	
	If $\kappa$ is huge and $j$ witnesses it, then for any $\gamma<\kappa$, $(j(\kappa),\gamma)\twoheadrightarrow(\kappa,\gamma)$. Kunen exploited this to prove Chang's Conjecture via a collapse of a huge cardinal (into a generic huge cardinal). Laver remarked that the same collapse can be used to prove $(\aleph_{n+2},\aleph_{n+1})\twoheadrightarrow(\aleph_{n+1},\aleph_n)$ for any $n\in\omega$. A more sophisticated collapse of a 2-huge cardinal was used by Foreman in \cite{Foreman} to prove $(\aleph_{n+3},\aleph_{n+2},\aleph_{n+1})\twoheadrightarrow(\aleph_{n+2},\aleph_{n+1},\aleph_n)$. 
	
	The dream is to push this to its limit: by a theorem of Silver (\cite{Eskew}), if $(\dots,\aleph_3,\aleph_2,\aleph_1)\twoheadrightarrow(\dots,\aleph_2,\aleph_1,\aleph_0)$, then $\aleph_\omega$ is J\'{o}nsson. It is a long-standing open problem whether $\aleph_\omega$ can be J\'{o}nsson, so proving this would be highly desirable. Unfortunately, the ``limit'' of hugeness, i.e., the existence of a $j:V\prec M$ that witnesses $n$-hugeness for every $n$ simultaneously, is actually inconsistent (the same proof for the inconsistency of the Reinhardt cardinal holds). Interestingly, also its generic counterpart is inconsistent: if $j:V\prec M\subseteq V[G]$, $\aleph_\omega=\sup_{n\in\omega}\kappa_n$, where $\kappa_0=\crt(j)<\aleph_\omega$ and $\kappa_{n+1}=j(\kappa_n)$, and $M^{\aleph_{\omega+1}}\subseteq M$, then $\aleph_{\omega+1}$ is J\'{o}nsson (see Remark \ref{jonsson}), but Shelah, using PCF theory, proved the following
	
	\begin{teo}[Shelah, \cite{Shelah3}]
	\label{nonJonsson}
	  If $2^{\aleph_0}\leq\aleph_{\omega+1}$, then $\aleph_{\omega+1}$ cannot be J\'{o}nsson.
	\end{teo}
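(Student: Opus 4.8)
The plan is to establish the strong, positive form of the statement: to exhibit a \emph{J\'{o}nsson function} $F\colon[\aleph_{\omega+1}]^{<\omega}\to\aleph_{\omega+1}$ with $F''[H]^{<\omega}=\aleph_{\omega+1}$ for every $H\in[\aleph_{\omega+1}]^{\aleph_{\omega+1}}$. Equivalently, and more transparently, I would argue by contradiction: assume $\aleph_{\omega+1}$ is J\'{o}nsson, encode all the relevant PCF data into a single algebra $\mathcal A$ on $\aleph_{\omega+1}$, pass to a proper elementary substructure $\mathcal B\prec\mathcal A$ of full size furnished by J\'{o}nssonness, and show that $\mathcal B$ is forced to be all of $\aleph_{\omega+1}$, a contradiction. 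The whole argument runs on Shelah's PCF analysis of $\aleph_\omega$.

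First I would fix a scale. Since $\aleph_\omega$ is singular of cofinality $\omega$, it is a theorem of Shelah that $\aleph_{\omega+1}\in\operatorname{pcf}(\{\aleph_n:n<\omega\})$; passing to the corresponding generator and reindexing, I obtain an infinite $a\subseteq\omega$, which I relabel as $\omega$ keeping $\langle\aleph_n\rangle_n$ cofinal in $\aleph_\omega$, together with a scale $\langle f_\alpha:\alpha<\aleph_{\omega+1}\rangle$ in $\prod_n\aleph_n$ that is $<_{J^{bd}}$-increasing and of true cofinality $\aleph_{\omega+1}$. I then let $\mathcal A=(\aleph_{\omega+1},<,\langle\aleph_n\rangle_n,(\alpha,n)\mapsto f_\alpha(n),\dots)$ carry enough Skolem functions, and from the (assumed) J\'{o}nssonness extract $\mathcal B\prec\mathcal A$ with $|B|=\aleph_{\omega+1}$ and $B\neq\aleph_{\omega+1}$.

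The first reduction is soft. As $|B|=\aleph_{\omega+1}$ and $\aleph_{\omega+1}$ is regular, $B$ is cofinal, so $\{f_\alpha:\alpha\in B\}$ is $<_{J^{bd}}$-cofinal in $\prod_n\aleph_n$. Put $\eta_n=\sup(B\cap\aleph_n)$. If $\eta_n<\aleph_n$ held for infinitely many $n$, I would feed the function $n\mapsto\eta_n+1$ to the scale to get $\alpha\in B$ with $f_\alpha(n)>\eta_n$ for cofinitely many $n$; but $f_\alpha(n)=F(\alpha,n)\in B\cap\aleph_n$, so $f_\alpha(n)\le\eta_n$, a contradiction. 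Hence $B\cap\aleph_n$ is cofinal in $\aleph_n$ for all large $n$, and in particular $B$ is cofinal in $\aleph_\omega$.

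The hard part is upgrading ``$B$ meets each $\aleph_n$ cofinally'' to ``$B=\aleph_{\omega+1}$,'' i.e.\ ruling out $\gamma=\min(\aleph_{\omega+1}\setminus B)$. The limit points of small, and especially countable, cofinality are the real obstacle, because there elementarity alone does not recover $\gamma$ from $\gamma\cap B$, and this is exactly where the hypothesis $2^{\aleph_0}\le\aleph_{\omega+1}$ is spent: to each such $\gamma$ one attaches a characteristic sequence in $\prod_n\aleph_n$ and, via a club-guessing sequence on $\aleph_{\omega+1}$ calibrated to the scale together with its exact-upper-bound (``no hole'') theory, one shows that $\gamma$ is in fact definable from parameters already in $B$. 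The bound on the continuum is what Shelah uses to make the guessing succeed at the cofinality-$\omega$ points, which the singular structure of $\aleph_\omega$ would otherwise let escape. I expect this final step --- fusing club guessing with the goodness of the scale so that no proper full-size elementary submodel can survive --- to be the crux of the proof, and it is precisely the part that draws on the full strength of Shelah's PCF machinery rather than on the elementary-submodel bookkeeping above.
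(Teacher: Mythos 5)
First, a point of order: the paper does not prove this statement at all --- it is imported as a black box from Shelah's 1978 paper --- so there is no internal argument to measure your attempt against; it has to stand on its own. Judged that way, what you have written is a correct setup, not a proof. The soft reductions are fine and standard: from a putative Jónsson substructure $\mathcal{B}\prec\mathcal{A}$ of size $\aleph_{\omega+1}$, regularity gives that $B$ is cofinal in $\aleph_{\omega+1}$, and the scale argument shows $B\cap\aleph_n$ is cofinal in $\aleph_n$ for all large $n$ (modulo arranging $\omega\subseteq B$, e.g.\ by adding a constant for each $n$ to the countable language). But everything after that --- ruling out $\gamma=\min(\aleph_{\omega+1}\setminus B)$ --- \emph{is} the theorem, and your proposal replaces it with a list of names of tools (``characteristic sequences'', ``club guessing calibrated to the scale'', ``exact upper bounds'') and the sentence that $\gamma$ ``is in fact definable from parameters already in $B$''. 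No construction is given, no lemma is stated, and, critically, the hypothesis $2^{\aleph_0}\le\aleph_{\omega+1}$ is never actually used: you only assert that it ``is spent'' at the cofinality-$\omega$ points without saying how. Since for $\aleph_\omega$ itself this very step is exactly what fails (whether $\aleph_\omega$ can be Jónsson is open, as the surrounding paper emphasizes), the entire burden of the proof sits on the part you have omitted.

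A secondary but real problem is that the machinery you gesture at does not match the quoted source. Shelah's 1978 paper predates PCF theory, scales on $\prod_n\aleph_n$, and club guessing; the hypothesis on the continuum is used there in a more hands-on combinatorial fashion (roughly, to control families of countable subsets of $\aleph_{\omega+1}$ and build a Jónsson algebra directly), whereas the later, purely PCF argument is generally credited with removing the cardinal-arithmetic hypothesis altogether. So as a sketch your outline describes neither the original argument for the theorem as stated nor the modern one accurately. To make this a proof you would need either to reconstruct the 1978 argument with the use of $2^{\aleph_0}\le\aleph_{\omega+1}$ made explicit, or to prove the stronger unconditional statement via club guessing and exact upper bounds --- in which case the stated hypothesis becomes a red herring and you should say so.
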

	
	Yet there are large cardinals between $n$-huge and Reinhardt:
	
	\begin{defin}
	 \begin{itemize}
	  \item I1: there exists $\lambda$ such that $\exists j:V_{\lambda+1}\prec V_{\lambda+1}$;
		\item I0: there exists $\lambda$ such that $\exists j:L(V_{\lambda+1})\prec L(V_{\lambda+1})$, with $\crt(j)<\lambda$.
	 \end{itemize}
	\end{defin}
	
	I0 in particular is very fruitful in consequences, as it gives to $L(V_{\lambda+1})$ a structure that is very similar to the structure of $L(\mathbb{R})$ under the Axiom of Determinacy, in a way that it is still not completely understood: if $\AD$ holds in $L(\mathbb{R})$, then it is well-known that $\omega_1$ is measurable in $L(\mathbb{R})$, but there are also many other measurable cardinals above and the first cardinal ``above'' $\mathbb{R}$ is a limit of them (\cite{Kechris}). The same holds in $L(V_{\lambda+1})$ under I0, where $\lambda^+$ is measurable and the first cardinal ``above'' $V_{\lambda+1}$ is a limit of measurable cardinals (see Lemma 22 in \cite{Woodin} for the original more general case, and Theorem 6.7 in \cite{Dimonte0} specifically for the I0 case).
	
	Now, the ``right'' versions of generic I1 and generic I0 (see Definition \ref{genericI0}) prove that $\aleph_\omega$ is J\'{o}nsson. Moreover, this paper presents other results, in line with the results about I0. In short, under generic I0 the structure of $L({\cal P}(\aleph_\omega))$ is similar to the structure of $L(\mathbb{R})$ under the \AD:
	
\newtheorem*{thm:main}{Theorem \ref{thm:main}}	
\begin{thm:main}
  Suppose that generic I0 holds at $\aleph_\omega$ as witnessed by a forcing notion $\mathbb{P}$. Then $\aleph_\omega$ is J\'{o}nsson. Moreover, in $L({\cal P}(\aleph_\omega))$ (see below for all the definitions):
	\begin{itemize}
	 \item if $\aleph_{\omega+1}^{V[G]}=\aleph_{\omega+1}$, then $\aleph_{\omega+1}$ is $\omega$-strongly measurable and J\'{o}nsson;
	 \item $\Theta$ is ``inaccessible'', in the sense that it is regular and if $\alpha<\Theta$ then there is a surjection from ${\cal P}(\aleph_\omega)$ to ${\cal P}(\alpha)$;
	 \item if $\mathbb{P}$ is $\omega$-closed, then $\Theta$ is limit of $\omega$-strongly measurable cardinals that are measurable.
	\end{itemize}
 \end{thm:main}
	
Some of these results are not new. A model where $\aleph_{\omega+1}$ is a measurable cardinal was already known:

 	 \begin{teo}[Martin]
	 \label{Martin}
   \AD{} implies $\aleph_{\omega+1}$ is measurable.
  \end{teo}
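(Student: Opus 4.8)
The plan is to produce a nonprincipal $\aleph_{\omega+1}$-complete measure on $\aleph_{\omega+1}$ out of the partition calculus that \AD{} imposes on the projective ordinals. There are two essentially independent ingredients: first, the identification of $\aleph_{\omega+1}$ with a cardinal carrying a strong partition relation, and second, a soft and general mechanism that converts such a relation into a normal measure. I would keep these two steps cleanly separated, since only the first is genuinely determinacy-theoretic.

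For the first ingredient I would invoke Martin's computation that, under \AD, $\aleph_{\omega+1}=\delta^1_3$, the third projective ordinal, together with the fact that $\delta^1_3$ is regular and satisfies the strong partition relation $\delta^1_3\to(\delta^1_3)^{\delta^1_3}_2$; that is, every colouring $f:[\aleph_{\omega+1}]^{\aleph_{\omega+1}}\to 2$ admits a homogeneous set of order type $\aleph_{\omega+1}$. This is the substantial input, resting on the determinacy of projective games, the Moschovakis coding of the reals by ordinals below the projective ordinals, and the scale and periodicity analysis used to locate and bound the $\delta^1_n$.

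For the second ingredient I would run the classical Kleinberg-style argument that a regular cardinal $\kappa$ with the strong partition property has its club filter as a $\kappa$-complete normal ultrafilter. Given $A\subseteq\kappa$, one builds from $A$ a two-colouring of $[\kappa]^\kappa$ recording the behaviour of $A$ at the closure points of an increasing sequence, extracts by strong partition a homogeneous $H$ of size $\kappa$, and observes that the closure points of $H$ form a club lying entirely inside $A$ or entirely inside $\kappa\setminus A$; hence the club filter decides every subset of $\kappa$. Completeness and normality then follow from the regularity of $\kappa$ and the weak partition relation $\kappa\to(\kappa)^{<\kappa}_2$ entailed by the strong one. Applying this with $\kappa=\aleph_{\omega+1}$ yields a nonprincipal $\aleph_{\omega+1}$-complete ultrafilter, which is exactly measurability.

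The only real obstacle lies in the first ingredient. Whereas the measurability of $\aleph_1$ needs nothing beyond the definability of the club filter on $\omega_1$ (Solovay), pushing a strong partition relation all the way up to the third projective ordinal requires the full inductive machinery of the periodicity theorems, and the equality $\delta^1_3=\aleph_{\omega+1}$ together with the regularity of $\delta^1_3$ are themselves deep theorems. By contrast, the passage from strong partition to a measure is uniform and formal; it is precisely the template I would expect to reuse below when extracting $\omega$-strongly measurable cardinals from the generic I0 hypothesis.
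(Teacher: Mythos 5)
The paper does not prove this statement: it is quoted as a known background theorem, attributed to Martin, and used only as a point of comparison for Theorem \ref{thm:main}. So there is no internal proof to measure you against, and your outline --- identify $\aleph_{\omega+1}$ with $\delta^1_3$, get a partition relation there, and convert it into a measure by the Kleinberg mechanism --- is indeed the standard determinacy-theoretic route. Two remarks are in order, one of which is a genuine error in your write-up.

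The error is in the second ingredient: the filter you actually extract is \emph{not} the club filter on $\kappa=\delta^1_3$, and the club filter cannot ``decide every subset of $\kappa$''. Under \AD{} the sets $E^\kappa_\omega$ and $E^\kappa_{\omega_1}$ are disjoint stationary subsets of $\delta^1_3$, so neither contains a club and the club filter is not an ultrafilter. What the homogeneous set $H$ controls is the suprema of increasing $\omega$-sequences (or, for other cofinality classes, $\lambda$-sequences) drawn from $H$; these form an $\omega$-club, not a club, and the correct conclusion is that the $\omega$-club filter --- equivalently the club filter restricted to $E^\kappa_\omega$ --- is a $\kappa$-complete normal ultrafilter. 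This is not pedantry in the present context: the entire framework of the paper ($\omega$-strong measurability, the sets $E^\kappa_\omega$, Remark \ref{stronglyornot}) is organized around precisely this distinction between the club filter and its restriction to a fixed cofinality class. The second remark is about economy and attribution: the full strong partition relation $\delta^1_3\to(\delta^1_3)^{\delta^1_3}_2$ is a much later and heavier theorem than what is needed; Kleinberg's theorem already converts the weak relation $\kappa\to(\kappa)^{\omega\cdot 2}_2$ (together with regularity) into the statement that the $\omega$-club filter is an ultrafilter, and this weaker relation at $\delta^1_3$ is closer to what Martin's original argument used. You should also note that the projective-ordinal machinery (regularity of $\delta^1_3$, the partition calculus) is developed in $\ZF+\AD+\DC$, or at least uses the countable choice for reals that \AD{} provides; since the paper is elsewhere careful about which fragment of choice is available, the same care is warranted here.
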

  
  \begin{teo}[Apter, \cite{Apter}]
	\label{Apter}
   Suppose $\kappa$ is $2^\lambda$-supercompact, with $\lambda$ measurable. Then there is a model of $\ZF + \aleph_{\omega+1}$ is measurable.
  \end{teo}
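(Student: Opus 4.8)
The plan is to build $N$ as a symmetric submodel of a Lévy collapse in which the supercompact $\kappa$ becomes $\aleph_\omega$ and the measurable $\lambda$ becomes $\aleph_{\omega+1}$, and then to transport the restriction of a normal measure on $\lambda$ into $N$. I take $\kappa<\lambda$ (the configuration relevant to the construction), I fix in $V$ a normal measure $\mathcal{U}$ on $\lambda$, and I fix an embedding $j\colon V\prec M$ witnessing $2^\lambda$-supercompactness, so that $\crt(j)=\kappa$, $j(\kappa)>2^\lambda$ and ${}^{2^\lambda}M\subseteq M$; in particular $\mathcal{U}\in M$ and $\mathcal{P}(\lambda)^V=\mathcal{P}(\lambda)^M$.

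First I would fix the forcing. Below $\kappa$, choose $\langle\delta_n:n<\omega\rangle$ increasing and cofinal in $\kappa$ with $\delta_0=\omega$ and force with an iteration $\mathbb{R}$ of Lévy collapses that turns each $\delta_{n+1}$ into $\delta_n^+$, so that $\kappa=\sup_n\delta_n$ becomes $\aleph_\omega$; above $\kappa$ force with $\mathbb{Q}=\mathrm{Coll}(\kappa,{<}\lambda)$, which is $\kappa$-closed and, since $\lambda$ is inaccessible, $\lambda$-c.c., and makes $\lambda=\kappa^+$. As $\mathbb{R}\subseteq V_\kappa$ and $\mathbb{Q}$ is $\kappa$-closed the two factors are mutually generic, so in $V[G]=V[G_{\mathbb{R}}\times G_{\mathbb{Q}}]$ one has $\kappa=\aleph_\omega$ and $\lambda=\aleph_{\omega+1}$. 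I would then let $N$ be the symmetric submodel determined by the natural automorphisms of $\mathbb{Q}$ (permuting the collapsing coordinates) together with the filter of subgroups fixing a piece of the collapse of size $<\kappa$, and the analogous symmetry on $\mathbb{R}$. A standard symmetry argument gives $N\models\ZF$, and since the symmetry acts within the collapse it neither revives a well-ordering of $\mathcal{P}(\aleph_\omega)$ nor collapses $\kappa$ or $\lambda$ further, so $\aleph_\omega^N=\kappa$ and $\aleph_{\omega+1}^N=\lambda$ with $\lambda$ regular in $N$.

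Next I would produce the measure, which must live only in the choiceless model: no successor cardinal is measurable under $\AC$, so $\lambda=\kappa^+$ is \emph{not} measurable in the intermediate model $V[G]$, and the whole point is that $\mathcal{P}(\lambda)^N$ is strictly smaller. The candidate is $\bar{\mathcal{U}}=\{X\in\mathcal{P}(\lambda)^N:\exists Y\in\mathcal{U}\ (Y\subseteq X)\}$. To see this is an ultrafilter on $\mathcal{P}(\lambda)^N$ I would use that every $X\in\mathcal{P}(\lambda)^N$ has a symmetric name with support of size $<\kappa$; by the homogeneity of the tail of $\mathbb{Q}$ above that support and the $\lambda$-completeness of $\mathcal{U}$, the set $X$ must contain, or be disjoint from, a set in $\mathcal{U}$, so $\bar{\mathcal{U}}$ decides every $N$-subset of $\lambda$. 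Since $V\subseteq N$, both $\mathcal{U}$ and the $V$-well-ordering of $\mathcal{P}(\lambda)^V$ are members of $N$, whence $\bar{\mathcal{U}}\in N$. This is exactly where the full degree $2^\lambda$ of supercompactness is used: lifting $j$ through $\mathbb{Q}$ — possible because $j[G_{\mathbb{Q}}]$ has size $2^\lambda<j(\kappa)$ and $j(\mathbb{Q})$ is $j(\kappa)$-closed in the $2^\lambda$-closed model $M$ — produces $\hat{j}\colon V[G_{\mathbb{Q}}]\prec M[H]$ that recognizes every symmetric subset of $\lambda$ and certifies that it is decided by $\mathcal{U}$.

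The main obstacle is $\lambda$-completeness of $\bar{\mathcal{U}}$ \emph{inside} $N$. Given $\gamma<\lambda$ and a sequence $\langle X_\xi:\xi<\gamma\rangle\in N$ of $\bar{\mathcal{U}}$-sets, each $X_\xi$ contains a $\mathcal{U}$-set $Y_\xi$ (chosen by the $V$-well-ordering), but the sequence $\langle Y_\xi\rangle$ may be a \emph{new} $\gamma$-sequence manufactured by $\mathbb{Q}$, so the $\lambda$-completeness of $\mathcal{U}$ in $V$ does not apply to it directly. The heart of the proof is therefore to show that $N$ adds no short sequence of $\mathcal{U}$-sets whose intersection leaves $\mathcal{U}$ — equivalently, that the symmetric structure is tight enough that every $<\lambda$-sequence of ground-model subsets of $\lambda$ appearing in $N$ is captured in a $\mathcal{U}$-sized, $j$-controlled way. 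This is precisely what $2^\lambda$-supercompactness buys: the closure ${}^{2^\lambda}M\subseteq M$ lets $\hat{j}$ see all of $\mathcal{P}(\lambda)$ at once and reflect the intersection back below $j(\lambda)$, so that $\bigcap_{\xi<\gamma}Y_\xi$ remains in $\mathcal{U}$. Reconciling the two competing demands — $\mathcal{P}(\lambda)^N$ small enough to carry a measure yet $\lambda$ regular and uncollapsed — is the delicate balance; once it is achieved, $\bar{\mathcal{U}}\in N$ is a $\lambda$-complete nonprincipal ultrafilter on $\lambda=\aleph_{\omega+1}$, and with $N\models\ZF$ this proves the theorem.
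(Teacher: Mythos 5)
There is a genuine, fatal gap at the very first step of your construction. You ``choose $\langle\delta_n:n<\omega\rangle$ increasing and cofinal in $\kappa$'' in order to turn $\kappa$ into $\aleph_\omega$ by an $\omega$-iteration of L\'{e}vy collapses. But $\kappa$ is $2^\lambda$-supercompact, hence measurable, hence regular and inaccessible in $V$: no such $\omega$-sequence exists, and the forcing $\mathbb{R}$ you describe cannot be defined. This is not repairable within your framework: if $\kappa=\aleph_\omega^N$ for a symmetric submodel $N\subseteq V[G]$, then $\cof(\kappa)=\omega$ already in $V[G]$, so the full extension must \emph{singularize} the regular cardinal $\kappa$, and no iteration of L\'{e}vy collapses below $\kappa$ (each piece living in $V_\kappa$) does that --- collapsing intervals below $\kappa$ only rearranges cardinals below it while $\kappa$ stays regular. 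Singularizing a regular cardinal without collapsing it is exactly what Prikry-type forcing is for, and it is the one place where the large-cardinal structure \emph{at} $\kappa$ is indispensable; your proposal never uses a Prikry component at all, so in your model $\kappa$ remains regular, $\lambda$ becomes $\kappa^+$, and nothing becomes $\aleph_{\omega+1}$.

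For calibration: the paper itself gives no proof (the theorem is quoted from Apter's paper), but Apter's actual construction differs from yours precisely at this point. He forces with a supercompact-Prikry-style forcing on ${\cal P}_\kappa(\lambda)$ (with interleaved collapses and a symmetric submodel), which \emph{simultaneously} gives $\kappa$ cofinality $\omega$, arranges the approximations to become the $\aleph_n$'s so that $\kappa=\aleph_\omega^N$, and collapses every $V$-cardinal in $(\kappa,\lambda)$, making $\lambda=\aleph_{\omega+1}^N$; this is where the supercompactness of $\kappa$ (of degree tied to $\lambda$) is consumed. The measurability of $\lambda$ in $N$ then follows the Jech-style pattern you gesture at, but with the correct mechanism made explicit: every set of ordinals in $N$ lies in an intermediate extension of $V$ by a forcing of size $<\lambda$, where the filter generated by ${\cal U}$ is a $\lambda$-complete ultrafilter by the L\'{e}vy--Solovay argument; $\lambda$-completeness inside $N$ follows by coding a $\gamma$-sequence of subsets of $\lambda$ ($\gamma<\lambda$) as a single set of ordinals, which is again captured by a small piece. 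Your corresponding step --- that the closure ${}^{2^\lambda}M\subseteq M$ lets $\hat{j}$ ``reflect the intersection back'' --- is an assertion, not an argument, and it is doing duty for exactly the capture property that the symmetric model must be shown to have. So even setting aside the collapse of the first step, the heart of the completeness proof is missing.
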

	
New is the J\'{o}nsson-ness of $\aleph_{\omega+1}$\footnote{It is well known in $\ZF+\DC${} that every measurable cardinal is Ramsey \cite{ErdosHajnal} and every Ramsey cardinal is J\'{o}nsson \cite{ErdosHajnal2}, so Theorems \ref{Martin} and \ref{Apter} seem to prove that $\aleph_{\omega+1}$ is J\'{o}nsson. But is this true also in \ZF{}?}, that goes strongly against Shelah's Theorem \ref{nonJonsson}, but this is not surprising, as under generic I0 $L({\cal P}(\aleph_\omega))$ does not satisfy the Axiom of Choice, and PCF theory has not been fully developed in this setting. But in fact, Theorem \ref{thm:main} goes very strongly against a more celebrated result of PCF theory:
\begin{teo}[Shelah, \cite{Shelah}]
 If $\aleph_\omega$ is strong limit, then $2^{\aleph_\omega}<\aleph_{\omega_4}$.
\end{teo}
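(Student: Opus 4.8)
The plan is to trade the cardinal-arithmetic statement for a pure $\operatorname{pcf}$ statement and then invoke the structure theory of $\operatorname{pcf}$. First I would use the strong-limit hypothesis to collapse $2^{\aleph_\omega}$ to a single exponential: since $2^{\aleph_n}<\aleph_\omega$ for every $n$, one has
$$2^{\aleph_\omega}=2^{\sum_n \aleph_n}=\prod_n 2^{\aleph_n}\le \prod_n \aleph_\omega=\aleph_\omega^{\aleph_0}\le 2^{\aleph_\omega},$$
so $2^{\aleph_\omega}=\aleph_\omega^{\aleph_0}$. The second reduction is a standard $\operatorname{pcf}$ identity: writing $A=\{\aleph_n:1\le n<\omega\}$ (progressive, meaning $|A|=\aleph_0<\aleph_1=\min A$), the strong-limit assumption gives $\aleph_\omega^{\aleph_0}=\cof([\aleph_\omega]^{\aleph_0},\subseteq)=\max\operatorname{pcf}(A)$, where $\operatorname{pcf}(A)$ is the set of cofinalities $\cof(\prod A/U)$ of reduced products by ultrafilters $U$ on $A$. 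The problem thus becomes: bound $\max\operatorname{pcf}(A)$ for this countable progressive $A$.

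Next I would develop the core of $\operatorname{pcf}$ theory for progressive $A$. The facts to establish are: (i) every reduced product $\prod A/U$ has a true cofinality, and $\operatorname{pcf}(A)$ has a maximum, equal to $\cof(\prod A,<)$; (ii) for each $\lambda\in\operatorname{pcf}(A)$ there is a \emph{generator} $B_\lambda\subseteq A$, the $B_\lambda$ generate the $\operatorname{pcf}$ ideal, and $\operatorname{pcf}(B_\lambda)=\operatorname{pcf}(A)\cap(\lambda+1)$ (compactness); and (iii) the cardinality bound $|\operatorname{pcf}(A)|\le |A|^{+3}$. These are proved by analysing $<$-increasing sequences modulo the relevant ideals inside a suitable elementary submodel, together with the existence of \emph{transitive generators} and a rank/club-guessing argument. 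Ingredient (iii) is the technical heart.

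Finally I would convert the cardinality bound into the index bound. The mechanism is an induction on the $\aleph$-index establishing $\operatorname{pp}(\aleph_\alpha)<\aleph_{|\alpha|^{+4}}$ for all $\alpha$ with $\cof(\alpha)=\omega$; for $\alpha=\omega$ this is exactly $\max\operatorname{pcf}(A)<\aleph_{\omega_4}$, since $|\omega|^{+4}=\aleph_4=\omega_4$. In the inductive step one uses that, via the generators and the identity $\operatorname{pcf}(\operatorname{pcf}(A))=\operatorname{pcf}(A)$, the set $\operatorname{pcf}(A)$ cannot climb arbitrarily fast through the $\aleph$-hierarchy: each distinct regular cardinal appearing in it forces a distinct generator, so reaching index $\ge\omega_4=|A|^{+4}$ would produce more than $|A|^{+3}=\aleph_3$ elements of $\operatorname{pcf}(A)$, contradicting (iii). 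Combining the three reductions yields $2^{\aleph_\omega}=\aleph_\omega^{\aleph_0}=\max\operatorname{pcf}(A)<\aleph_{\omega_4}$. I expect the genuine obstacle to be precisely ingredient (iii) together with this final climbing estimate: controlling the number and order type of $\operatorname{pcf}(A)$ requires the full apparatus of transitive generators, minimally obedient club-guessing sequences, and the elevation lemmas, which is where Shelah's argument is hard; the cardinal-arithmetic reductions of the first paragraph are routine by comparison.
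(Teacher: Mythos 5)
You should first note that the paper does not prove this statement at all: it is quoted, with attribution to Shelah's \emph{Cardinal Arithmetic} \cite{Shelah}, purely as a foil for Theorem \ref{thm:main} (which shows that the analogous bound fails badly in the choiceless model $L({\cal P}(\aleph_\omega))$). So there is no internal proof to compare against, and your proposal has to be judged as a reconstruction of Shelah's own argument. As such, the skeleton is right: the reductions $2^{\aleph_\omega}=\aleph_\omega^{\aleph_0}$ and $\aleph_\omega^{\aleph_0}=\cof([\aleph_\omega]^{\aleph_0},\subseteq)=\max\mathrm{pcf}(\{\aleph_n:1\le n<\omega\})$ are correct and standard under the strong limit hypothesis, and the theorem does indeed come down to controlling $\mathrm{pcf}(A)$ for the progressive set $A=\{\aleph_n:1\le n<\omega\}$.

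Two caveats, though. First, as you concede yourself, everything after your first paragraph is a list of theorems to be proved rather than a proof: generators, transitive generators, the bound $|\mathrm{pcf}(A)|\le|A|^{+3}$ and the localization machinery are the content of several chapters of \cite{Shelah}, so what you have is a correct table of contents, not an argument. Second, the final counting step is stated too loosely to be right as written: knowing $|\mathrm{pcf}(A)|\le\aleph_3$ and that distinct members of $\mathrm{pcf}(A)$ have distinct generators does not by itself bound the \emph{index} of $\max\mathrm{pcf}(A)$, since a priori $\mathrm{pcf}(A)$ could be a sparse set of $\aleph_3$ regular cardinals reaching arbitrarily high in the $\aleph$-hierarchy. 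What closes the gap is the ``no holes'' theorem (for an interval $A$ of regular cardinals, $\mathrm{pcf}(A)$ is again an interval of regular cardinals), or equivalently the localization theorem you allude to via $\mathrm{pcf}(\mathrm{pcf}(A))=\mathrm{pcf}(A)$: with that in hand, $\max\mathrm{pcf}(A)\ge\aleph_{\omega_4}$ would force $\mathrm{pcf}(A)$ to contain all $\aleph_4$ many successor cardinals below $\aleph_{\omega_4}$, contradicting the $\aleph_3$ bound. You should make that dependence explicit instead of folding it into the phrase ``cannot climb arbitrarily fast''.
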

	In Theorem \ref{thm:main}, $\Theta$ can be seen as a measure of $2^{\aleph_\omega}$ in a choiceless setting: it is drastically larger than the cap given by Shelah's theorem, as it is an inaccessible limit of measurable cardinals. Generic I0 therefore not only proves that $\aleph_\omega$ is J\'{o}nsson, but provides a model where PCF theory fails radically.
	
 In this paper no attempt is made to prove the consistency of generic I0. Currently it is not even known whether generic 3-huge is consistent, as Foreman's technique meets difficult technical obstacles in its generalization. This is the graph indicating current knowledge, the arrows are implication of consistency, framed are the new results, between parentheses the hypotheses that are inconsistent with \ZFC:

\begin{tikzpicture}
  \draw (1,7) node{(Reinhardt)};
  \draw (1,1) node{huge};
	\draw (1,2) node{2-huge};
	\draw (1,3) node{3-huge};
	\draw (1,4) node{\vdots};
	\draw [->] (1,1.75) -- (1,1.25);
	\draw [->] (1,2.75) -- (1,2.25);
	\draw [->] (1,3.70) -- (1,3.25);
	\draw (3.7,1) node{generic huge};
	\draw (9,1) node{$(\aleph_{n+2},\aleph_{n+1})\twoheadrightarrow(\aleph_{n+1},\aleph_n)$};
	\draw [->] (1.5,1) -- (2.5,1);
	\draw [->] (4.8,1) -- (6.7,1);
	\draw (3.7,2) node{generic 2-huge};
	\draw (9,2) node{$(\aleph_{n+3},\aleph_{n+2},\aleph_{n+1})\twoheadrightarrow(\aleph_{n+2},\aleph_{n+1},\aleph_n)$};
	\draw [->] (5,2) -- (6,2);
	\draw (3.7,3) node{generic 3-huge};
	\draw (9,3) node{$(\aleph_{n+4},\aleph_{n+3},\aleph_{n+2},\aleph_{n+1})\twoheadrightarrow(\aleph_{n+3},\aleph_{n+2},\aleph_{n+1},\aleph_n)$};
	\draw [->] (4.9,3) -- (5.1,3);
	\draw [->] (3.7,1.75) -- (3.7,1.25);
	\draw [->] (3.7,2.75) -- (3.7,2.25);
	\draw [->] (3.7,3.7) -- (3.7,3.25);
	\draw [->] (9,1.75) -- (9,1.25);
	\draw [->] (9,2.75) -- (9,2.25);
	\draw [->] (9,3.7) -- (9,3.25);
	\draw (3.7,4) node{\vdots};
	\draw (9,4) node{\vdots};
	\draw (9,5) node{$\aleph_\omega$ J\'{o}nsson};
	\draw (3.7,7) node{(generic Reinhardt)};
	\draw (1,5) node{I1};
	\draw (1,6) node{I0};
	\draw [->] (1,5.75) -- (1,5.25);
	\draw [->] (1,4.75) -- (1,4.25);
	\draw (3.7,5) node{generic I1};
	\draw (3.7,6) node{generic I0};
	\draw [->] (5,5) -- (7.5,5);
	\draw [->] (1.75,2) -- (2.5,2);
		\draw (9,6) node{$\aleph_\omega$ J\'{o}nsson, Theorem \ref{thm:main}};
	\draw [->] (5,6) -- (6.75,6);
		\draw [->] (3.7,5.75) -- (3.7,5.25);
	\draw [->] (1,6.75) -- (1,6.25);
	\draw [->] (3.7,6.75) -- (3.7,6.25);
	\draw [->] (3.7,4.75) -- (3.7,4.25);
	\draw [->] (9,4.75) -- (9,4.25);
	\draw [->] (9,5.75) -- (9,5.25);
	\draw (2.5,6.5) -- (12,6.5)-- (12,4.5) -- (2.5,4.5) -- (2.5,6.5);
 \end{tikzpicture}

 \section{Preliminaries}
  
	This paper is about $L({\cal P}(\aleph_\omega))$. The assumption is that there is a $V$ in which \ZFC{} holds, and $L({\cal P}(\aleph_\omega))$ is constructed inside $V$, as the smallest model of \ZF{} that contains the ``real'' ${\cal P}(\aleph_\omega)$. The construction is the same as $L$, but the starting point of the construction is $L_0({\cal P}(\aleph_\omega))={\cal P}(\aleph_\omega)$, and the levels $L_\alpha({\cal P}(\aleph_\omega))$ are defined as expected.
	
	Even if \AC{} holds in $V$, it does not necessarily hold in $L({\cal P}(\aleph_\omega))$. Yet some relatively strong weak form of choice is inherited:
	
	\begin{defin}
	 \emph{$DC_\lambda$}: $\forall X\forall F:(X)^{<\lambda}\to{\cal P}(X)\setminus\{\emptyset\}\quad\exists g:\lambda\to X\quad\forall\gamma<\lambda\  g(\gamma)\in F(g\upharpoonright\gamma)$.
	\end{defin}
	
  \begin{rem}[$\ZF+\DC_{\aleph_\omega}$]
  \label{DC}
   $L({\cal P}(\aleph_\omega))\vDash \DC_{\aleph_\omega}$
  \end{rem}
  \begin{proof}[Sketch of proof]
   The proof exploits the fact that there exists a surjection $\Phi:\Ord\times{\cal P}(\aleph_\omega)\twoheadrightarrow L({\cal P}(\aleph_\omega))$: $\DC_{\aleph_\omega}$ is true on ${\cal P}(\aleph_\omega)$ as it is true in $V$, on $\Ord$ because $\Ord$ is well-ordered, and $\Phi$ mixes the two to carry $\DC_{\aleph_\omega}$ in all $L({\cal P}(\aleph_\omega))$. For a similar proof in $L(V_{\lambda+1})$ see Lemma 5.10 in \cite{Dimonte0}.
  \end{proof}
	
	In a choiceless model the concept of cardinality is notoriously more complex. If $L({\cal P}(\aleph_\omega))\nvDash\AC$, then ${\cal P}(\aleph_\omega)$ must not be well-orderable, otherwise the well-order extends to all $L({\cal P}(\aleph_\omega))$. How to characterize the cardinality of ${\cal P}(\aleph_\omega)$, then? In $L(\mathbb{R})$ the classical way is to calculate the largeness of $\mathbb{R}$ via surjections instead of the usual bijections, and we borrow the same idea. Surjections in this paper are always noted with the symbol $\twoheadrightarrow$.
	
	\begin{defin}
    $\Theta^{L({\cal P}(\aleph_\omega))}=\{\alpha:\exists\pi\in L({\cal P}(\aleph_\omega)),\ \pi:{\cal P}(\aleph_\omega)\twoheadrightarrow\alpha\}$.
  \end{defin}
  
	For the rest of the paper, $\Theta^{L({\cal P}(\aleph_\omega))}=\Theta$, as this will not create confusion. It is routine to prove that $\Theta$ is regular in $L({\cal P}(\aleph_\omega))$, see for example Exercise 28.19 in \cite{Kanamori}.
	
	The cardinal $\Theta$ is central in the study of the subsets of ${\cal P}(\aleph_\omega)$:
	\begin{lem}[Folklore]
	\label{folklore}
	  If $X\in L({\cal P}(\aleph_\omega))$, $X\subseteq {\cal P}(\aleph_\omega)$, then $X\in L_\Theta({\cal P}(\aleph_\omega))$.
	\end{lem}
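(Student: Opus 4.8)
The plan is to mimic the classical proof of the corresponding fact in $L(\mathbb{R})$ --- that every set of reals lies below $\Theta^{L(\mathbb{R})}$ --- via a condensation argument, adapting each step so that it goes through choicelessly using surjections in place of bijections. Fix $X\subseteq{\cal P}(\aleph_\omega)$ with $X\in L({\cal P}(\aleph_\omega))$ and let $\alpha$ be least with $X\in L_\alpha({\cal P}(\aleph_\omega))$. The goal is to produce an elementary submodel $H\prec L_\alpha({\cal P}(\aleph_\omega))$ with ${\cal P}(\aleph_\omega)\cup\{X,{\cal P}(\aleph_\omega)\}\subseteq H$ whose transitive collapse is some $L_\beta({\cal P}(\aleph_\omega))$ with $\beta<\Theta$.

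First I would build $H$. Since $\AC$ may fail on ${\cal P}(\aleph_\omega)$ there need not be single-valued definable Skolem functions, so instead I would use the canonical definable surjection $\Phi:\Ord\times{\cal P}(\aleph_\omega)\twoheadrightarrow L({\cal P}(\aleph_\omega))$ from the proof of Remark \ref{DC}: for each formula $\varphi$ and tuple of parameters already present, select the least ordinal $\xi$ coding a witness and throw \emph{all} witnesses $\Phi(\xi,a)$, for $a\in{\cal P}(\aleph_\omega)$, into the hull simultaneously. Iterating this closure operation $\omega$ times, starting from ${\cal P}(\aleph_\omega)\cup\{X,{\cal P}(\aleph_\omega)\}$, yields by Tarski--Vaught an $H\prec L_\alpha({\cal P}(\aleph_\omega))$.

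Next, condensation. The relation $\in$ on $H$ is extensional, well-founded and set-like, so $H$ has a transitive collapse $\pi:H\to\bar H$. The key point is that $\pi$ fixes ${\cal P}(\aleph_\omega)$ pointwise: every ordinal $\xi<\aleph_\omega$ is itself a member of ${\cal P}(\aleph_\omega)\subseteq H$, so by induction $\pi(\xi)=\xi$, whence $\pi(a)=a$ for every $a\in{\cal P}(\aleph_\omega)$, $\pi({\cal P}(\aleph_\omega))={\cal P}(\aleph_\omega)$, and in particular $\pi(X)=X$. Since $L_\alpha({\cal P}(\aleph_\omega))\models$ ``$V=L({\cal P}(\aleph_\omega))$'', the collapse $\bar H$ is a transitive model of the same statement with parameter ${\cal P}(\aleph_\omega)$, hence $\bar H=L_\beta({\cal P}(\aleph_\omega))$ for $\beta=\ot(\Ord\cap H)$, and therefore $X=\pi(X)\in L_\beta({\cal P}(\aleph_\omega))$.

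It remains to check $\beta<\Theta$, and handling the choicelessness here is the main obstacle. I would verify by induction on the $\omega$ stages of the construction that $H$ is a surjective image of ${\cal P}(\aleph_\omega)$ inside $L({\cal P}(\aleph_\omega))$: the definable bijections ${\cal P}(\aleph_\omega)^{<\omega}\cong{\cal P}(\aleph_\omega)$ absorb the finitely many parameters, the countably many formulas, and the extra witness coordinate $a$ at each stage, and framing the $g_n:{\cal P}(\aleph_\omega)\twoheadrightarrow H_n$ as a uniform recursion keeps the sequence of stage-surjections definable, so that their union remains dominated by ${\cal P}(\aleph_\omega)$. Composing such a surjection with $\pi\upharpoonright(\Ord\cap H)$, which is an order isomorphism onto $\beta$, produces a surjection ${\cal P}(\aleph_\omega)\twoheadrightarrow\beta$ in $L({\cal P}(\aleph_\omega))$, so $\beta\in\Theta$. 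Hence $X\in L_\beta({\cal P}(\aleph_\omega))\subseteq L_\Theta({\cal P}(\aleph_\omega))$, as desired.
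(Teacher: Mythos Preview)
The paper does not actually supply a proof of this lemma --- it is simply labeled ``Folklore'' and left unproved --- so there is nothing to compare against line by line. Your argument is the standard one: take an elementary hull of $L_\alpha({\cal P}(\aleph_\omega))$ containing ${\cal P}(\aleph_\omega)\cup\{X\}$, use the definable surjection $\Phi:\Ord\times{\cal P}(\aleph_\omega)\twoheadrightarrow L({\cal P}(\aleph_\omega))$ to control the size of the hull choicelessly, collapse, and invoke condensation for the $L({\cal P}(\aleph_\omega))$ hierarchy to conclude $\bar H=L_\beta({\cal P}(\aleph_\omega))$ with $\beta<\Theta$. This is correct and is exactly the proof one would expect under the ``Folklore'' tag.

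Two cosmetic remarks. First, taking $\alpha$ least with $X\in L_\alpha({\cal P}(\aleph_\omega))$ makes $\alpha$ a successor, which is slightly awkward for condensation; it is cleaner to take any limit $\alpha$ above that stage (this changes nothing in your bound on $\beta$). Second, strictly speaking the verification that $\pi(X)=X$ uses not just $X\in H$ but $X\subseteq{\cal P}(\aleph_\omega)\subseteq H$, which you have; it is worth saying explicitly that $X\cap H=X$ for that reason. Neither point is a gap.
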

	
	For this reason models like $L_\alpha({\cal P}(\aleph_\omega))$ with $\alpha<\Theta$ are also relevant in the study of the subsets of ${\cal P}(\aleph_\omega)$, as they form a well-ordered set of approximations of $L_\Theta({\cal P}(\aleph_\omega))$. We say that an ordinal $\alpha<\Theta$ is \emph{good} iff  every element of $L_\alpha({\cal P}(\aleph_\omega))$ is definable in $L_\alpha({\cal P}(\aleph_\omega))$ from an element in ${\cal P}(\aleph_\omega)$. Good ordinals in $L(V_{\lambda+1})$ were introduced by Laver in \cite{Laver}, and they are described in more details in \cite{DimonteFriedman}.
	
	\begin{lem}
  \label{ubdgood}
    There are unbounded good ordinals below $\Theta$.
  \end{lem}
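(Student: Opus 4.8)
The plan is to obtain, for every $\beta<\Theta$, a good ordinal $\alpha$ with $\beta\le\alpha<\Theta$, by a Skolem hull together with a condensation argument for the $L(\mathcal P(\aleph_\omega))$-hierarchy. For an ordinal $\gamma$ let $H_\gamma$ be the set of elements of $L_\gamma(\mathcal P(\aleph_\omega))$ that are definable there from parameters in $\mathcal P(\aleph_\omega)$. Using a bijection $\aleph_\omega\times\aleph_\omega\to\aleph_\omega$ to code finite tuples of reals as single reals, $H_\gamma$ is closed under the canonical Skolem functions, so by Tarski--Vaught $H_\gamma\prec L_\gamma(\mathcal P(\aleph_\omega))$; and since $\aleph_\omega\in\mathcal P(\aleph_\omega)$ we have $\mathcal P(\aleph_\omega)\cup\mathrm{trcl}(\mathcal P(\aleph_\omega))\subseteq H_\gamma$, so the transitive collapse $\pi_\gamma$ fixes $\mathcal P(\aleph_\omega)$ pointwise. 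By condensation for the $L(\mathcal P(\aleph_\omega))$-hierarchy the collapse is $L_{\alpha(\gamma)}(\mathcal P(\aleph_\omega))$ with $\alpha(\gamma)=\ot(H_\gamma\cap\Ord)$, and because $\pi_\gamma$ is an isomorphism fixing every real the defining formula of each element transfers downward; hence every element of $L_{\alpha(\gamma)}(\mathcal P(\aleph_\omega))$ is definable from a real and $\alpha(\gamma)$ is good.

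First I would check $\alpha(\gamma)<\Theta$. For a set $\gamma$ the map sending a real $a$ (coding a finite tuple) and a formula to the object it defines in $L_\gamma(\mathcal P(\aleph_\omega))$ is a surjection of $\mathcal P(\aleph_\omega)$ onto $H_\gamma$ lying in $L(\mathcal P(\aleph_\omega))$, since the satisfaction relation of a set is available there; composing with $\pi_\gamma$ and restricting to ordinals gives a surjection $\mathcal P(\aleph_\omega)\twoheadrightarrow\alpha(\gamma)$ in $L(\mathcal P(\aleph_\omega))$, whence $\alpha(\gamma)<\Theta$. It therefore suffices to arrange $\alpha(\gamma)\ge\beta$, that is, to force $\beta$-many ordinals into the hull.

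For this I would code $\beta$ by reals. As $\beta<\Theta$ there is a surjection $s:\mathcal P(\aleph_\omega)\twoheadrightarrow\beta$, and the induced prewellordering $P=\{(a,b):s(a)\le s(b)\}$ is a subset of $\mathcal P(\aleph_\omega)\times\mathcal P(\aleph_\omega)$, hence---after coding the product into $\aleph_\omega$---a single subset of $\mathcal P(\aleph_\omega)$ of length $\beta$, from which $\beta$ and a surjection onto it are recovered definably from $P$ and a real. Forming the hull of $\mathcal P(\aleph_\omega)\cup\{P\}$ inside a level $L_\gamma(\mathcal P(\aleph_\omega))$ with $P\in L_\gamma(\mathcal P(\aleph_\omega))$ then puts every $\eta<\beta$---being the $P$-rank of some real---into the hull, so that $\beta$ is fixed by the collapse and the collapsed height is $\ge\beta$.

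The hard part is that $P$ is a set of reals rather than a real, so a priori the collapsed structure is only pointwise definable from reals together with the extra parameter $\bar P=\pi(P)$, and need not be good as it stands. I expect to eliminate this parameter by a closing-off construction: using $\DC_{\aleph_\omega}$ (Remark \ref{DC}) and the regularity of $\Theta$, build an increasing continuous chain of such hulls in which each step absorbs the prewellorderings coding the ordinals already reached, and argue that at an appropriate limit of the chain the collapse of the governing prewellordering becomes itself definable from a real, so that the union is a genuine good level above $\beta$. Verifying that this process closes off below $\Theta$---equivalently, that the ordinals definable from reals are cofinal in $\Theta$---is the delicate point, and it is exactly here that the regularity of $\Theta$ and Lemma \ref{folklore}, which keeps all the relevant sets of reals inside $L_\Theta(\mathcal P(\aleph_\omega))$, are used.
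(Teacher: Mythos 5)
Your overall strategy (definable hulls plus condensation) is essentially a repackaging of the paper's argument, which follows Laver: both come down to producing, above any $\beta<\Theta$, a level of the hierarchy every element of which is definable from a single element of ${\cal P}(\aleph_\omega)$. The first part of your proposal is fine: the hull of parameters from ${\cal P}(\aleph_\omega)$ collapses, by condensation, to a good level below $\Theta$. But the entire content of the lemma is in the step you defer, and as stated your plan for it does not work. You correctly observe that pushing the height above $\beta$ requires importing a prewellordering $P\subseteq{\cal P}(\aleph_\omega)\times{\cal P}(\aleph_\omega)$ as a parameter, and that this destroys goodness; your proposed remedy is a ``continuous increasing chain'' of hulls closing off ``at an appropriate limit'', but you never identify a mechanism by which a set-of-reals parameter is ever converted into a single element of ${\cal P}(\aleph_\omega)$, and without that the limit hull is no better than any single stage.

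The missing mechanism, which the paper flags explicitly as ``the key point'', is twofold. First, by Lemma \ref{folklore} the prewellordering (equivalently, the surjection $h_n$ of ${\cal P}(\aleph_\omega)$ onto the level reached so far), being codeable as a subset of ${\cal P}(\aleph_\omega)$, already lies in some $L_{\beta_{n+1}}({\cal P}(\aleph_\omega))$ with $\beta_{n+1}<\Theta$; taking the next surjection onto \emph{that} level exhibits the previous parameter as the image of a single $a_n\in{\cal P}(\aleph_\omega)$, i.e., reduces it to a real parameter. Second, one iterates this exactly $\omega$ times and uses the closure of ${\cal P}(\aleph_\omega)$ under $\omega$-sequences via a pairing function on $\aleph_\omega$, so that $\langle a_n:n<\omega\rangle$ is coded by a single element of ${\cal P}(\aleph_\omega)$ from which the limit surjection onto $L_{\beta_\omega}({\cal P}(\aleph_\omega))$ is definable; hence $\beta_\omega$ is good. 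Your chain, by contrast, is of unspecified length: at a limit of uncountable cofinality there is no way to amalgamate the accumulated parameters into one element of ${\cal P}(\aleph_\omega)$, and neither $\DC_{\aleph_\omega}$ nor the regularity of $\Theta$ helps with that. If you replace the vague closing-off by the $\omega$-step iteration just described, your argument becomes the paper's.
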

  \begin{proof}[Sketch of proof]
    As in the $L(V_{\lambda+1})$ case, see \cite{Laver}. The key point is that ${\cal P}(\aleph_\omega)$ can be considered closed by $\omega$-sequences\footnote{Actually, ${\cal P}(\aleph_\omega)$ can be considered closed under $\aleph_\omega$-sequences. Given a pairing function $\langle\cdot,\cdot\rangle:\aleph_\omega\times\aleph_\omega\to\aleph_\omega$, we code $\langle A_\alpha:\alpha<\aleph_\omega\rangle$ as $A^*=\{\langle\alpha,a\rangle:a\in A_\alpha\}$.}.
    
    Let $\beta_0<\Theta$. Then there is a surjection $h_0:{\cal P}(\aleph_\omega)\twoheadrightarrow L_{\beta_0}({\cal P}(\aleph_\omega))$ in $L({\cal P}(\aleph_\omega))$. Therefore there must exist $a_0\in{\cal P}(\aleph_\omega)$ such that $h_0$ is definable from $a_0$, and since $h_0$ is codeable as a subset of ${\cal P}(\aleph_\omega)$ by Lemma \ref{folklore} there exists $\beta_1<\Theta$ such that $h_0\in L_{\beta_1}({\cal P}(\aleph_\omega))$. By induction define $h_n$, $\beta_n$ and $a_n$ in the obvious way, and it is routine to check that if $\beta_\omega$ is the supremum of the $\beta_n$'s, then there exists $h:{\cal P}(\aleph_\omega)\twoheadrightarrow L_{\beta_\omega}({\cal P}(\aleph_\omega))$ that is definable from $\langle a_0,\dots,a_n,\dots\rangle\in{\cal P}(\aleph_\omega)$, and therefore $\beta_\omega$ is good. 
  \end{proof}
		
	Large cardinals are also problematic in choiceless models. Fortunately, one definition of ``measurable'' can be adopted unchanged in \ZF: we say that $\kappa$ is a measurable cardinal if there exists a $\kappa$-complete non-principal ultrafilter on $\kappa$. But the study of $L(\mathbb{R})$ under the Axiom of Determinacy indicates a more specific notion of measurability, where the measure comes from the club filter.
	
	  \begin{defin}
   $E^\kappa_\omega=\{\eta<\kappa:\cof(\eta)=\omega\}$.
  \end{defin}
	
	\begin{defin}[Woodin, Definition 198 in \cite{Woodin2}]
   Let $\kappa$ be an uncountable regular cardinal. We say that $\kappa$ is $\omega$-strongly measurable if there exists $\gamma<\kappa$ such that $2^\gamma<\kappa$ and there does not exist a sequence $\langle S_\alpha:\alpha<\gamma\rangle$ of pairwise disjoint subsets of $\kappa$ such that for each $\alpha<\gamma$, $S_\alpha$ is stationary in $E^\kappa_\omega$.
  \end{defin}
	
  Under the right conditions, an $\omega$-strongly measurable is actually measurable:

 \begin{rem}[$\ZF+\DC_\gamma$]
   \label{stronglyornot}
   If $\kappa$ is $\omega$-strongly measurable as witnessed by $\gamma$ and the club filter on $\kappa$ is $\kappa$-complete, then $\kappa$ is measurable.
  \end{rem}
  \begin{proof}[Sketch of proof]
   Let $\gamma<\kappa$ that witnesses that $\kappa$ is strongly measurable. It is a standard procedure (see Lemma 2.6 in \cite{Kafkoulis}) to prove that if there does not exist a sequence $\langle S_\alpha:\alpha<\gamma\rangle$ of pairwise disjoint subsets of $\kappa$ such that for each $\alpha<\gamma$, $S_\alpha$ is stationary in $E^\kappa_\omega$, then there exists a stationary set on which the club filter is an ultrafilter: 
   
   If a stationary set cannot be split into two stationary sets (modulo a nonstationary set), then the club filter is an ultrafilter on it. So suppose this never happens. Then by induction one can split $\kappa$ in two stationary sets, then in four, and so on. For the limit case, one uses $\DC_\gamma$, and after $\gamma$ steps a partition $\langle S_\alpha:\alpha<\gamma\rangle$ of stationary sets is created, and that is contradictory.
   
   The club filter is $\kappa$-complete, so the remark follows.
  \end{proof}
	
	Note that under $\ZF+\DC_{<\kappa}$ the club filter on $\kappa$ is $\kappa$-complete, therefore in that case it is a direct implication.

 Another large cardinal that is expressible in a choiceless context is the J\'{o}nsson cardinal:

 \begin{defin}
  A cardinal $\kappa$ is J\'{o}nsson iff any structure for a countable first-order language with domain of cardinality $\kappa$ has a proper elementary substructure with domain of the same cardinality.
 \end{defin}

 This concept is tightly connected to generic large cardinals:

 \begin{rem}
  \label{jonsson}
  Let $\lambda$ be a cardinal, $G$ be generic for some forcing notion, $j:V\prec M\subseteq V[G]$, $j(\lambda)=\lambda$ and $^\lambda M\subseteq M$, or just $j''\lambda\in M$. Then $\lambda$ is J\'{o}nsson.
 \end{rem}
 \begin{proof}
  Suppose not. Then there exists some structure ${\cal U}=(U,\in,\dots)$ such that $|U|=\lambda$ and ${\cal U}$ has no elementary substructure with domain $U$ of cardinality $\lambda$. We can suppose that $U\subseteq\lambda$. By elementarity, since $\lambda$ is a fixed point of $j$, in $M$ the same is true for $j({\cal U})=(j(U),\in,\dots)$. But consider $j''{\cal U}$. We have that $j''{\cal U}\in M$, either because $^\lambda M\subseteq M$ or because $j''{\cal U}=j({\cal U})\cap j''\lambda\in M$, but $j''{\cal U}\prec j({\cal U})$, this is a contradiction.
 \end{proof}
  
 \section{Definition of generic I0}
  
	Recall that I1 is the existence of an elementary embedding $j:V_{\lambda+1}\prec V_{\lambda+1}$ and that I0 is the existence of an elementary embedding $j:L(V_{\lambda+1})\prec L(V_{\lambda+1})$ with critical point less than $\lambda$. By the proof of Reinhardt's inconsistency, in these cases $\lambda$ must be a strong limit cardinal of cofinality $\omega$, and $j(\lambda=\lambda$. Moreover, under I0 it must be that $L(V_{\lambda+1})\nvDash\AC$. 
	
	As it is usual for generic large cardinals, the concept of ``generic I0'' is very vague. It implies that the generic large cardinal should be similar to I0, i.e., the existence of some embedding $j:L({\cal P}(\kappa))\prec L({\cal P}(\kappa))^{V[G]}$, but this is not enough to characterize it, as different $\kappa$'s and variations on how similar is $V[G]$ to $V$ produce drastically different hypotheses. The first step will be to restrict ourselves to the case $\kappa=\aleph_\omega$, we are going to call it ``generic I0 holds at $\aleph_\omega$''. Then we indicate other properties that state that $V[G]$ is close enough to $V$ (the closer it is, the harder it is for it to be consistent, but on the other hand there are more interesting consequences). This choice has of course some arbitrariness: it is possible for different kind of closures to be fruitful, so the definition in all its details should be considered limited to this paper. But every point of the following definition has a reason to be there for the purpose of the paper, and the rest of the section is dedicated to the explanation of such reasons.
	
  \begin{defin}
  \label{genericI0}
   Suppose \GCH{} below $\aleph_\omega$. We say that generic I0 holds at $\aleph_\omega$ if there exists a forcing notion $\mathbb{P}\in L({\cal P}(\aleph_\omega))$ and a generic $G$ $\mathbb{P}$-generic such that:
     \begin{enumerate}
		\item in $L({\cal P}(\aleph_\omega))$ there exists $\pi:{\cal P}(\aleph_\omega)\twoheadrightarrow\mathbb{P}$;
		\item $\aleph_\omega=\aleph_\omega^{V[G]}$;
   \item every element of ${\cal P}(\aleph_\omega)^{V[G]}$ has a name (coded) in ${\cal P}(\aleph_\omega)$;
   \item in $V[G]$ there exists $j:L({\cal P}(\aleph_\omega))\prec L({\cal P}(\aleph_\omega))[G]$ with $\crt(j)<\aleph_\omega$;
   \item there is a $\mathbb{P}$-term for $H^{V[G]}(\aleph_\omega)$ and $j\upharpoonright H(\aleph_\omega):H(\aleph_\omega)\prec H^{V[G]}(\aleph_\omega)$.
  \end{enumerate}
  \end{defin}
	
	Point (1) implies that we can suppose that $\mathbb{P}\subseteq {\cal P}(\aleph_\omega)$, and we will for the rest of the paper.
	
	Point (4) needs to be discussed in more detail. We have two possible choices for the codomain of $j$:
	\begin{itemize}
	 \item $L({\cal P}(\aleph_\omega))^{V[G]}$, that is $L({\cal P}(\aleph_\omega))$ as built inside the forcing extension;
	 \item $L({\cal P}(\aleph_\omega))[G]$, that is the forcing extension of $L({\cal P}(\aleph_\omega))$, or the class of the elements of $V[G]$ that have a name in $L(V_{\lambda+1})$.
	\end{itemize}
	
	Definition \ref{genericI0}(2) and (3) provide different ways to describe these two models. First of all, by Definition \ref{genericI0}(2) 
	\begin{equation*}
		{\cal P}(\aleph_\omega)^{V[G]}={\cal P}^{V[G]}(\aleph_\omega^{V[G]})={\cal P}^{V[G]}(\aleph_\omega).\footnote{With ${\cal P}^{V[G]}(X)$ we indicate of course the subsets of $X$ that are in $V[G]$}
	\end{equation*}
	Then Definition \ref{genericI0}(3) says that ${\cal P}(\aleph_\omega)^{V[G]}\subseteq{\cal P}(\aleph_\omega)[G]$, therefore ${\cal P}^{V[G]}(\aleph_\omega)={\cal P}^{{\cal P}(\aleph_\omega)[G]}(\aleph_\omega)$. In other words, all the subsets of $\aleph_\omega$ in $V[G]$ are already in ${\cal P}(\aleph_\omega)[G]$, so for example ${\cal P}^{L({\cal P}(\aleph_\omega))[G]}(\aleph_\omega)={\cal P}^{V[G]}(\aleph_\omega)$. But then $L({\cal P}(\aleph_\omega))[G]$ is a \ZF-model that contains ${\cal P}^{V[G]}(\aleph_\omega)$, therefore trivially 
	\begin{equation*}
	 (L({\cal P}(\aleph_\omega)))^{V[G]}=L({\cal P}^{V[G]}(\aleph_\omega))\subseteq L({\cal P}(\aleph_\omega))[G].
	\end{equation*}
	
	Could these two models be equal? The key object is ${\cal P}(\aleph_\omega)$: it is trivially in $L({\cal P}(\aleph_\omega))[G]$, but not necessarily in $L({\cal P}(\aleph_\omega))^{V[G]}$. But this is a consequence of generic I0 at $\aleph_\omega$:
  
	\begin{lem}
	\label{equal}
   Suppose that generic I0 holds at $\aleph_\omega$ and let $\mathbb{P}$, $G$ witness that. Then $(L({\cal P}(\aleph_\omega)))^{V[G]}=L({\cal P}(\aleph_\omega))[G]$. 
  \end{lem}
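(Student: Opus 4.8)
The inclusion $(L({\cal P}(\aleph_\omega)))^{V[G]}=L({\cal P}^{V[G]}(\aleph_\omega))\subseteq L({\cal P}(\aleph_\omega))[G]$ has already been established, so the plan is to prove the reverse inclusion $L({\cal P}(\aleph_\omega))[G]\subseteq L({\cal P}^{V[G]}(\aleph_\omega))$. Since $L({\cal P}(\aleph_\omega))[G]$ is the least transitive \ZF-model containing all the ordinals together with ${\cal P}^V(\aleph_\omega)$ and $G$, while $L({\cal P}^{V[G]}(\aleph_\omega))$ is one such model, it suffices to verify the two membership statements ${\cal P}^V(\aleph_\omega)\in L({\cal P}^{V[G]}(\aleph_\omega))$ and $G\in L({\cal P}^{V[G]}(\aleph_\omega))$: the first yields $L({\cal P}(\aleph_\omega))\subseteq L({\cal P}^{V[G]}(\aleph_\omega))$, and then any $\tau_G$ with a name $\tau\in L({\cal P}(\aleph_\omega))$ is evaluable inside $L({\cal P}^{V[G]}(\aleph_\omega))$ from $\tau$ and $G$. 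This is exactly the point flagged before the lemma: ${\cal P}(\aleph_\omega)$ is the one object not obviously present on the forcing side.

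First I would fix the critical sequence of the embedding of Definition \ref{genericI0}(4): write $\kappa_0=\crt(j)<\aleph_\omega$ and $\kappa_{n+1}=j(\kappa_n)$, so that, as in the I0 case, $j(\aleph_\omega)=\aleph_\omega$ and $\aleph_\omega=\sup_{n}\kappa_n$. In $L({\cal P}(\aleph_\omega))$ the internally computed power set of $\aleph_\omega$ is exactly the starting set ${\cal P}^V(\aleph_\omega)$, so by elementarity $j({\cal P}^V(\aleph_\omega))$ is the internal power set of $\aleph_\omega$ of the codomain, which by Definition \ref{genericI0}(2),(3) equals ${\cal P}^{V[G]}(\aleph_\omega)$; thus $j({\cal P}^V(\aleph_\omega))={\cal P}^{V[G]}(\aleph_\omega)$. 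Next I would use Definition \ref{genericI0}(5): since \GCH{} holds below $\aleph_\omega$, $|H(\aleph_\omega)|=\aleph_\omega$, hence the elementary map $j\upharpoonright H(\aleph_\omega):H(\aleph_\omega)\prec H^{V[G]}(\aleph_\omega)$ is coded by a single subset of $\aleph_\omega$, and therefore is an element of ${\cal P}^{V[G]}(\aleph_\omega)\subseteq L({\cal P}^{V[G]}(\aleph_\omega))$. In particular $H^V(\aleph_\omega)=\dom(j\upharpoonright H(\aleph_\omega))$ and every ${\cal P}^V(\kappa_n)$ lie in the forcing-side model. Since every ground-model $A\subseteq\aleph_\omega$ is the increasing union of its bounded pieces $A\cap\kappa_n\in H^V(\aleph_\omega)$, the natural attempt is to reconstruct $j\upharpoonright{\cal P}^V(\aleph_\omega)$ inside $L({\cal P}^{V[G]}(\aleph_\omega))$ by the canonical extension $A\mapsto\bigcup_n (j\upharpoonright H(\aleph_\omega))(A\cap\kappa_n)$, familiar from the extension of an I1 embedding from $V_\lambda$ to $V_{\lambda+1}$, and then to read off ${\cal P}^V(\aleph_\omega)$ as its domain.

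The main obstacle is precisely here. The extension formula is defined on every $A\in{\cal P}^{V[G]}(\aleph_\omega)$ all of whose bounded pieces $A\cap\kappa_n$ lie in $H^V(\aleph_\omega)$, and a short computation shows that the coherence $(j\upharpoonright H)(A\cap\kappa_n)=(j\upharpoonright H)(A\cap\kappa_{n+1})\cap\kappa_{n+1}$ holds for all such $A$, whether or not $A$ itself lies in $V$. Hence $j\upharpoonright H(\aleph_\omega)$ by itself does not separate the genuine $V$-subsets of $\aleph_\omega$ from the $V[G]$-subsets assembled out of old bounded pieces, and the naive reconstruction over-generates. To break the symmetry I would bring in the full embedding of Definition \ref{genericI0}(4) together with the identity $j({\cal P}^V(\aleph_\omega))={\cal P}^{V[G]}(\aleph_\omega)$: the honest images $j''{\cal P}^V(\aleph_\omega)$ form a distinguished subset of ${\cal P}^{V[G]}(\aleph_\omega)$, and the real task becomes to show that $j\upharpoonright{\cal P}^V(\aleph_\omega)$ (not merely its restriction to $H(\aleph_\omega)$) is an element of $L({\cal P}^{V[G]}(\aleph_\omega))$; once it is, its domain is the desired ${\cal P}^V(\aleph_\omega)$, and the analogue of Lemma \ref{folklore} guarantees that the coded map appears below the corresponding $\Theta$. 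Establishing this definability of the embedding inside the choiceless model $L({\cal P}^{V[G]}(\aleph_\omega))$ is the crux, and it is the counterpart of the corresponding reconstruction in the $L(V_{\lambda+1})$ analysis.

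Finally, for $G\in L({\cal P}^{V[G]}(\aleph_\omega))$: once ${\cal P}^V(\aleph_\omega)$ is in the model we also have $\mathbb{P}$, the surjection $\pi:{\cal P}^V(\aleph_\omega)\twoheadrightarrow\mathbb{P}$ of Definition \ref{genericI0}(1), and the forcing relation of $\mathbb{P}$ over $L({\cal P}(\aleph_\omega))$, all internal to $L({\cal P}^{V[G]}(\aleph_\omega))$. Using Definition \ref{genericI0}(3) to represent the generic by a code in ${\cal P}^{V[G]}(\aleph_\omega)$ (through $\pi$ and a canonical name for $G$) I would recover $G$ as an element of the forcing-side model, completing the reverse inclusion and hence the equality. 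I expect this last step to be routine relative to the reconstruction of ${\cal P}^V(\aleph_\omega)$, in line with the author's remark that ${\cal P}(\aleph_\omega)$ is the only genuinely problematic object.
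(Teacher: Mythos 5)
There is a genuine gap, and it sits exactly where you yourself flag ``the crux''. Your strategy is bottom-up: show ${\cal P}^V(\aleph_\omega)\in L({\cal P}^{V[G]}(\aleph_\omega))$ (and then $G$), by reconstructing the ground-model power set from $j\upharpoonright H(\aleph_\omega)$. You correctly observe that the canonical extension $A\mapsto\bigcup_n(j\upharpoonright H(\aleph_\omega))(A\cap\kappa_n)$ over-generates when $\mathbb{P}$ is not $\omega$-closed, and you then declare that the real task is to show $j\upharpoonright{\cal P}^V(\aleph_\omega)\in L({\cal P}^{V[G]}(\aleph_\omega))$ --- but you give no argument for this, and none is available along these lines. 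The amenability result (Lemma \ref{amenability}) places $j\upharpoonright L_\alpha({\cal P}(\aleph_\omega))$ only in $L({\cal P}(\aleph_\omega))[G]$, which is precisely the model whose equality with $L({\cal P}^{V[G]}(\aleph_\omega))$ you are trying to prove, so invoking it here would be circular; and the reconstruction of $j$ on ${\cal P}^V(\aleph_\omega)$ from $j\upharpoonright H(\aleph_\omega)$ presupposes knowing the domain ${\cal P}^V(\aleph_\omega)$, which is the very object you are trying to recover. The paper's Open Problem \ref{perfect} and the role of $\omega$-closure in Lemma \ref{omegaclosed} indicate that a direct identification of ${\cal P}^V(\aleph_\omega)$ inside $L({\cal P}^{V[G]}(\aleph_\omega))$ is genuinely problematic in the absence of closure, so your route cannot be completed as stated.

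The paper avoids all of this with a one-line elementarity argument that your proposal never uses: Definition \ref{genericI0}(4) stipulates that the codomain of $j$ is $L({\cal P}(\aleph_\omega))[G]$, and since the domain satisfies $V=L({\cal P}(\aleph_\omega))$, elementarity forces $L({\cal P}(\aleph_\omega))[G]\vDash V=L({\cal P}(j(\aleph_\omega)))$; by point (2) $j(\aleph_\omega)=\aleph_\omega$, and by point (3) the power set of $\aleph_\omega$ as computed in $L({\cal P}(\aleph_\omega))[G]$ is the full ${\cal P}^{V[G]}(\aleph_\omega)$, whence $L({\cal P}(\aleph_\omega))[G]=L({\cal P}^{V[G]}(\aleph_\omega))=(L({\cal P}(\aleph_\omega)))^{V[G]}$. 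In particular ${\cal P}^V(\aleph_\omega)\in L({\cal P}^{V[G]}(\aleph_\omega))$ comes out as a \emph{consequence} of the lemma rather than serving as a stepping stone towards it. The lesson is that the hypothesis you need to lean on is the choice of codomain in Definition \ref{genericI0}(4), not the amenability clause (5).
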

	\begin{proof}
	 Naturally $L({\cal P}(\aleph_\omega))\vDash V=L({\cal P}(\aleph_\omega))$, therefore by elementarity $L({\cal P}(\aleph_\omega))[G]\vDash V=L({\cal P}(j(\aleph_\omega)))$. By Definition \ref{genericI0}(2) $j(\aleph_\omega)=\aleph_\omega$, so $L({\cal P}(\aleph_\omega))[G]\vDash V=L({\cal P}(\aleph_\omega))$. By Definition \ref{genericI0}(3) ${\cal P}^{L({\cal P}(\aleph_\omega))[G]}(\aleph_\omega)={\cal P}^{V[G]}(\aleph_\omega)$, therefore
	\begin{equation*}
	L({\cal P}(\aleph_\omega))[G]=L({\cal P}^{V[G]}(\aleph_\omega))=(L({\cal P}(\aleph_\omega)))^{V[G]}
	\end{equation*}
	\end{proof}
	
	The proof also provides that $L({\cal P}(\aleph_\omega))^{V[G]}=L({\cal P}(\aleph_\omega))^{L({\cal P}(\aleph_\omega))[G]}$, and for better readability we will use the first notation even when we mean the second one.
	
	We can try to be even more precise: any element of $L({\cal P}(\aleph_\omega))^{V[G]}$ has a name in $L({\cal P}(\aleph_\omega))$, but we can do this level by level. It is a calculation that if $\mathbb{P}\in L_\gamma({\cal P}(\aleph_\omega))$ and $\sigma\in L_\eta({\cal P}(\aleph_\omega))$ is a $\mathbb{P}$-name, then $Def(\sigma_G)$ has a name in $L_{\max\{\gamma,\eta\}+3}({\cal P}(\aleph_\omega))$\footnote{For example, $\{\langle\tau,1\rangle:\dom(\tau)\subseteq\dom(\sigma),\ \exists\varphi\in\Fml\ \exists\tau_1,\dots\tau_n\in\dom(\sigma)\ \forall p\in\mathbb{P}\ \forall\pi\in\dom(\sigma)\ \langle\pi,p\rangle\in\tau\leftrightarrow p\Vdash\sigma\vDash\varphi(\pi,\sigma,\tau_1,\dots,\tau_n)\}$.}. Now, ${\cal P}^{V[G]}(\aleph_\omega)$ has a name in $L_\omega({\cal P}(\aleph_\omega))$, so by induction $L_\delta({\cal P}(\aleph_\omega))^{V[G]}$ has a name in $L_{\gamma+\delta+n}({\cal P}(\aleph_\omega))$ for some $n\in\omega$. With a similar calculation, we find that if $x$ is definable over a model $\sigma_G$, with $\sigma\in L_\eta({\cal P}(\aleph_\omega))$, then $x$ has a name in $L_{\max\{\gamma,\eta\}+3}({\cal P}(\aleph_\omega))$\footnote{For example $\{\langle\pi,p\rangle\in\dom(\sigma)\times\mathbb{P}: p\Vdash\pi\in\sigma\wedge\sigma\vDash\varphi(\pi,\tau_1,\dots,\tau_n)\}$}, so if $x\in L_{\delta+1}({\cal P}(\aleph_\omega))^{V[G]}$, then $x$ has a name in $L_{\gamma+\delta+n}({\cal P}(\aleph_\omega))$ for some $n\in\omega$. In other words, for any $\delta$ ordinal, there is a $n\in\omega$ such that $L_{\delta+1}({\cal P}(\aleph_\omega))^{V[G]}\subseteq L_{\gamma+\delta+n}({\cal P}(\aleph_\omega))$.
	
	On the other hand, suppose that $\eta$ is such that ${\cal P}(\aleph_\omega),\ G\in L_\eta({\cal P}(\aleph_\omega))^{V[G]}$. Then any element in $L_\delta({\cal P}(\aleph_\omega))$, $\mathbb{P}$-names included, will be in $L_{\max\{\delta,\eta\}}({\cal P}(\aleph_\omega))^{V[G]}$, therefore for any $\delta$ ordinal $L_\delta({\cal P}(\aleph_\omega))[G]\subseteq L_{\max\{\delta,\eta\}+1}({\cal P}(\aleph_\omega))^{V[G]}$. We proved the following:
	
	\begin{lem}
	\label{level}
	 Suppose that generic I0 holds at $\aleph_\omega$ and let $\mathbb{P}$, $G$ witness that. Let $\gamma<\Theta$ be such that $\mathbb{P},\ {\cal P}(\aleph_\omega),\ G\in L_\gamma({\cal P}(\aleph_\omega))^{V[G]}$. Then for any $\gamma<\alpha$ limit ordinal, $L_\alpha({\cal P}(\aleph_\omega))^{V[G]}=L_\alpha({\cal P}(\aleph_\omega))[G]$.
	\end{lem}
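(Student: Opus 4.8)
The plan is to read off the equality at the limit level $\alpha$ from the two level-by-level inclusions obtained in the discussion just above the lemma, simply by taking unions over $\delta<\alpha$. Recall that those computations produced, writing $\gamma$ for a level where $\mathbb{P},\ {\cal P}(\aleph_\omega),\ G$ are all available: an ``upward'' inclusion $L_{\delta+1}({\cal P}(\aleph_\omega))^{V[G]}\subseteq L_{\gamma+\delta+n}({\cal P}(\aleph_\omega))[G]$ for some $n\in\omega$ (each element of a level of the construction performed inside $V[G]$ has a $\mathbb{P}$-name of boundedly higher rank), and a ``downward'' inclusion $L_\delta({\cal P}(\aleph_\omega))[G]\subseteq L_{\max\{\delta,\gamma\}+1}({\cal P}(\aleph_\omega))^{V[G]}$ (every $\mathbb{P}$-name of ground-model rank $\delta$, which is coded once $\mathbb{P}$, ${\cal P}(\aleph_\omega)$ and $G$ are present, reappears boundedly high in $L({\cal P}(\aleph_\omega))^{V[G]}$). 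The whole content of the lemma is that the bounded shifts in these two inclusions disappear in the limit.

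I would first dispatch the inclusion $\supseteq$. Since $\alpha$ is a limit, $L_\alpha({\cal P}(\aleph_\omega))[G]=\bigcup_{\delta<\alpha}L_\delta({\cal P}(\aleph_\omega))[G]$, and the downward inclusion puts the $\delta$-th term inside $L_{\max\{\delta,\gamma\}+1}({\cal P}(\aleph_\omega))^{V[G]}$. For every $\delta<\alpha$ we have $\max\{\delta,\gamma\}+1<\alpha$, because $\gamma<\alpha$ and $\alpha$ is a limit; hence each term, and therefore the whole union, lies in $L_\alpha({\cal P}(\aleph_\omega))^{V[G]}$. This direction needs nothing beyond $\alpha$ being a limit ordinal above $\gamma$.

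For $\subseteq$ I would argue symmetrically, writing $L_\alpha({\cal P}(\aleph_\omega))^{V[G]}=\bigcup_{\delta<\alpha}L_{\delta+1}({\cal P}(\aleph_\omega))^{V[G]}$ and using the upward inclusion to put the $\delta$-th term inside $L_{\gamma+\delta+n}({\cal P}(\aleph_\omega))[G]$. The finite shift $+n$ is harmless at a limit. The one point that requires care --- and the only genuine obstacle I anticipate --- is checking that $\gamma+\delta+n<\alpha$ for all $\delta<\alpha$, i.e. that the fixed left-summand $\gamma$ (the ground-model rank of $\mathbb{P}$) is absorbed by $\alpha$. This is exactly the condition $\gamma+\alpha=\alpha$: granting it, $\gamma+\delta<\gamma+\alpha=\alpha$ by strict monotonicity of addition in the right argument, and then $\gamma+\delta+n<\alpha$ since $\alpha$ is a limit, so each term, and hence the union, lies in $L_\alpha({\cal P}(\aleph_\omega))[G]$. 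The property $\gamma+\alpha=\alpha$ holds for the additively closed limit ordinals $\alpha>\gamma$ that are relevant in what follows (in particular every additively indecomposable $\alpha>\gamma$), so I would either restrict attention to such $\alpha$ or note that a general limit can be replaced by the additively closed ordinal it generates. Combining the two inclusions then yields $L_\alpha({\cal P}(\aleph_\omega))^{V[G]}=L_\alpha({\cal P}(\aleph_\omega))[G]$.
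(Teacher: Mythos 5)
Your argument is exactly the paper's: the lemma is stated immediately after the two level-by-level inclusions with the words ``We proved the following'', so the intended proof is precisely the passage to unions at a limit level that you carry out. Your treatment of the direction $L_\alpha({\cal P}(\aleph_\omega))[G]\subseteq L_\alpha({\cal P}(\aleph_\omega))^{V[G]}$ matches the paper, and your worry about the other direction is legitimate rather than a defect of your proof: from $L_{\delta+1}({\cal P}(\aleph_\omega))^{V[G]}\subseteq L_{\gamma+\delta+n}({\cal P}(\aleph_\omega))[G]$ one only gets $L_\alpha({\cal P}(\aleph_\omega))^{V[G]}\subseteq L_{\gamma+\alpha}({\cal P}(\aleph_\omega))[G]$, so the statement as literally written (``for any $\gamma<\alpha$ limit'') silently uses $\gamma+\alpha=\alpha$, which fails e.g.\ for $\alpha=\gamma+\omega$ with $\gamma$ infinite. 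Your repair --- restrict to $\alpha$ with $\gamma+\alpha=\alpha$ (equivalently $\alpha\geq\gamma\cdot\omega$, in particular any additively indecomposable $\alpha>\gamma$) --- is the right one and costs nothing, since every later application of the lemma only needs the conclusion for unboundedly many, suitably large, limit $\alpha$.
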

	
	Lemma \ref{equal} shows that the definition of generic I0 with $L({\cal P}(\aleph_\omega))[G]$ implies the one with $(L({\cal P}(\aleph_\omega)))^{V[G]}$. It is not much stronger, for example assuming $\omega$-closure is enough to make them equivalent:
	
  \begin{lem}
	 \label{omegaclosed}
   Let $\mathbb{P}$ be an $\omega$-closed forcing notion on $L({\cal P}(\aleph_\omega))$ that with $G$ $\mathbb{P}$-generic satisfies Definition \ref{genericI0}(1), (2), (3). Then $(L({\cal P}(\aleph_\omega)))^{V[G]}=L({\cal P}(\aleph_\omega))[G]$.
  \end{lem}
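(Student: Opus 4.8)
The plan is to prove the two inclusions separately, the inclusion $\subseteq$ being free. Indeed, exactly the computation preceding Lemma \ref{equal} shows that Definition \ref{genericI0}(2) and (3) alone give ${\cal P}^{L({\cal P}(\aleph_\omega))[G]}(\aleph_\omega)={\cal P}^{V[G]}(\aleph_\omega)$, whence $(L({\cal P}(\aleph_\omega)))^{V[G]}=L({\cal P}^{V[G]}(\aleph_\omega))\subseteq L({\cal P}(\aleph_\omega))[G]$; no closure of $\mathbb{P}$ is needed for this. Set $N=L({\cal P}^{V[G]}(\aleph_\omega))$. As in Lemma \ref{equal}, the reverse inclusion is equivalent to $L({\cal P}(\aleph_\omega))[G]\vDash V=L({\cal P}(\aleph_\omega))$; since every element of $L({\cal P}(\aleph_\omega))[G]$ is of the form $\sigma_G$ for a name $\sigma\in L({\cal P}(\aleph_\omega))$, this in turn reduces to establishing the two memberships ${\cal P}(\aleph_\omega)\in N$ and $G\in N$. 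In Lemma \ref{equal} both followed instantly from the elementarity of $j$; the point here is to extract them from the $\omega$-closure of $\mathbb{P}$ instead.

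For ${\cal P}(\aleph_\omega)\in N$ I would use that an $\omega$-closed forcing adds no new $\omega$-sequence of ground model sets: a set $A\subseteq\aleph_\omega$ then belongs to the old ${\cal P}(\aleph_\omega)$ if and only if $A\cap\aleph_n\in{\cal P}(\aleph_n)$ for every $n\in\omega$, since the sequence $\langle A\cap\aleph_n:n\in\omega\rangle$ must already lie in $V$ once all of its entries do, and $A$ is its union. Invoking $\GCH$ below $\aleph_\omega$, each ${\cal P}(\aleph_n)$ has size $\aleph_{n+1}<\aleph_\omega$, so fixing enumerations in $V$ and applying the coding of the footnote to Lemma \ref{ubdgood} I can compress the whole sequence $\langle {\cal P}(\aleph_n):n\in\omega\rangle$ into a single $C\subseteq\aleph_\omega$. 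This $C$ lies in the old power set, hence in ${\cal P}^{V[G]}(\aleph_\omega)\subseteq N$, and inside $N$ one decodes from $C$ each ${\cal P}(\aleph_n)$ and then defines ${\cal P}(\aleph_\omega)=\{A\in{\cal P}^{V[G]}(\aleph_\omega):\forall n\in\omega\ A\cap\aleph_n\in{\cal P}(\aleph_n)\}$. Thus ${\cal P}(\aleph_\omega)\in N$, so $L({\cal P}(\aleph_\omega))\subseteq N$ and in particular $\mathbb{P}$, the surjection $\pi$, the forcing relation and every name for a subset of $\aleph_\omega$ are elements of $N$.

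The hard part will be $G\in N$. With all the forcing machinery now inside $N$, the strategy is to recover $G$ from the realized values of names: by Definition \ref{genericI0}(3) the interpretations $\sigma_G$, as $\sigma$ ranges over the names in $L({\cal P}(\aleph_\omega))$ for subsets of $\aleph_\omega$, exhaust ${\cal P}^{V[G]}(\aleph_\omega)$, all of which is present in $N$. Using $\mathbb{P}\subseteq{\cal P}(\aleph_\omega)$ one can attach to each condition $p$ a name $\dot A_p$ whose interpretation is $p$ when $p\in G$ and $\emptyset$ otherwise, so that membership in $G$ becomes legible from the value of a single subset-of-$\aleph_\omega$ name, and genericity then characterises $G$ as the set of conditions compatible with every realized value. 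The genuine obstacle is that reading these realizations off inside $N$ appears to presuppose the correspondence $\sigma\mapsto\sigma_G$, which is exactly the data encoded by $G$; converting the compatibility characterisation into a bona fide definition of $G$ in $N$ is where $\omega$-closure has to be used once more --- through the existence of lower bounds for the descending $\omega$-sequences of conditions that witness genericity --- to break the apparent circularity. Once $G\in N$ is in hand, $L({\cal P}(\aleph_\omega))[G]\subseteq N$ follows, and together with the free inclusion this yields the claimed equality.
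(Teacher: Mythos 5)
Your first two steps are, taken together, essentially the paper's entire proof. The inclusion $(L({\cal P}(\aleph_\omega)))^{V[G]}\subseteq L({\cal P}(\aleph_\omega))[G]$ is indeed free from Definition \ref{genericI0}(2) and (3), and the paper's argument for the converse consists precisely of your second step: since $\mathbb{P}$ is $\omega$-closed, a set $a\in{\cal P}^{V[G]}(\aleph_\omega)$ lies in $V$ if and only if $a\cap\aleph_n\in V$ for every $n$ (because the sequence $\langle a\cap\aleph_n:n\in\omega\rangle$ of ground-model sets is then itself in $V$ and $a$ is its union), so ${\cal P}^V(\aleph_\omega)$ is definable inside $N=(L({\cal P}(\aleph_\omega)))^{V[G]}$. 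Your use of \GCH{} below $\aleph_\omega$ to package $\langle{\cal P}^V(\aleph_n):n\in\omega\rangle$ into a single parameter $C\in{\cal P}^V(\aleph_\omega)$ makes explicit something the paper leaves implicit (it simply writes the condition ``$a\cap\aleph_n\in V$'' without exhibiting a parameter available to $N$), and that part is correct.

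The problem is your third step. You are right that the claimed equality requires $G\in N$, since $G=\dot G_G\in L({\cal P}(\aleph_\omega))[G]$; but your proposal does not prove this. You describe a strategy and then concede that it is circular --- recovering $G$ from the realizations $\sigma_G$ presupposes the correspondence $\sigma\mapsto\sigma_G$, which is exactly the data of $G$ --- and the closing appeal to ``$\omega$-closure once more'' is not an argument: the existence of lower bounds for descending $\omega$-sequences of conditions does not by itself make $G$ definable over some $L_\alpha({\cal P}^{V[G]}(\aleph_\omega))$ from elements of ${\cal P}^{V[G]}(\aleph_\omega)$, and $G$ is a subset of ${\cal P}^V(\aleph_\omega)$ of size possibly $2^{\aleph_\omega}$, while $L(X)$ need not contain arbitrary subsets of $X$. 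So as written the proof is incomplete at exactly this point. (The issue would evaporate if $\mathbb{P}$ could be taken to be a subset of $H(\aleph_\omega)$, since then $G$ is coded by a single element of ${\cal P}^{V[G]}(\aleph_\omega)$; but Definition \ref{genericI0}(1) only yields $\mathbb{P}\subseteq{\cal P}(\aleph_\omega)$.) For comparison, the paper's proof stops after establishing ${\cal P}^V(\aleph_\omega)\in N$, asserting that this is the only missing ingredient and not treating $G$ separately; you have correctly isolated the step that needs a further idea, but you have not supplied it.
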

  \begin{proof}
   For the case $L(V_{\lambda+1})$, this is Corollary 3.8 in \cite{DimonteFriedman}. As noted, we just need to prove that ${\cal P}(\aleph_\omega)\in(L({\cal P}(\aleph_\omega)))^{V[G]}$. Let $G$ be generic for $\mathbb{P}$. By Definition \ref{genericI0}(1), we can suppose $G\subseteq{\cal P}(\aleph_\omega)$. As $\mathbb{P}$ is $\omega$-closed, we can define 
	\begin{equation*}
	 {\cal P}(\aleph_\omega)=\{a\in{\cal P}^{V[G]}(\aleph_\omega):\forall n\in\omega\ a\cap\aleph_n\in V\}.
	\end{equation*}
	
	This is because if $a\cap\aleph_n=a_n\in V$, then $\langle a_n:n\in\omega\rangle\in V$, and $a=\bigcup_{n\in\omega}a_n\in V$.  So $L({\cal P}(\aleph_\omega))[G]\subseteq L({\cal P}(\aleph_\omega)))^{V[G]}$.
  \end{proof}
	
	Theorem 8.8 in \cite{Dimonte0} proves that in the $L(V_{\lambda+1})$ case the $\omega$-closure is necessary and sufficient.
	
	\begin{open}
	 \label{perfect}
	 Let $\mathbb{P}$ be a forcing notion on $L({\cal P}(\aleph_\omega))$ that satisfies Definition \ref{genericI0}(1), (2) and (3),  and such that $(L({\cal P}(\aleph_\omega)))^{V[G]}=L({\cal P}(\aleph_\omega))[G]$. Must $\mathbb{P}$ be $\omega$-closed?
	\end{open}
	
	There are therefore two approaches for defining generic I0. One is to ask for $\mathbb{P}$ to be $\omega$-closed: this will give the full results of Theorem \ref{thm:main}, and because of Lemma \ref{omegaclosed} the codomain of the generic embedding can be just $L({\cal P}(\aleph_\omega))^{V[G]}$. The other is to forgo $\omega$-closure, but then adding the (potentially) stronger condition of the codomain of the embedding to be $L({\cal P}(\aleph_\omega))[G]$. This will prove the first two points of Theorem \ref{thm:main}. Why are we using this stronger version of generic I0, even if it would seem that the weaker version is closer to the spirit of generic large cardinals? It is just a technical matter: we are proving now that, thanks to the lifting lemma, $L({\cal P}(\aleph_\omega))[G]$ maintains the ``reflection structure'' of $L({\cal P}(\aleph_\omega))$, and this is central to the proofs. For this paper, we are not choosing one of the two approaches, especially in light of the fact that the Open Problem \ref{perfect} would prove that the two approaches are the same.
	
	\begin{lem}
   \label{lifting}
	 Suppose generic I0 at $\aleph_\omega$, and let $\mathbb{P}, G$ witness it. Let $\gamma$ be such that $\mathbb{P},\ G,\ {\cal P}^V(\aleph_\omega)\in L_\gamma({\cal P}(\aleph_\omega))^{V[G]}$. Then for any $\gamma<\alpha,\beta$ limit ordinals, $L_\alpha({\cal P}(\aleph_\omega))\prec L_\beta({\cal P}(\aleph_\omega))$ iff
	 \begin{equation*}
		   L({\cal P}(\aleph_\omega))[G]\vDash L_\alpha({\cal P}(\aleph_\omega))\prec L_\beta({\cal P}(\aleph_\omega)).
		 \end{equation*}
\end{lem}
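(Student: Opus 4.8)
The strategy is to run the standard ``definability of forcing preserves elementarity between levels'' argument, using Lemma \ref{level} to keep all the relevant names and forcing computations inside the levels $\alpha$ and $\beta$. First I would reduce everything to $V[G]$: satisfaction in a set structure is absolute between $V$ and $V[G]$, so the left-hand side $L_\alpha({\cal P}(\aleph_\omega))\prec L_\beta({\cal P}(\aleph_\omega))$ (which is about ${\cal P}^V(\aleph_\omega)$) has the same truth value computed in $V$ or in $V[G]$; and by Lemmas \ref{equal} and \ref{level} the right-hand side is exactly $L_\alpha({\cal P}^{V[G]}(\aleph_\omega))\prec L_\beta({\cal P}^{V[G]}(\aleph_\omega))$. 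Writing $M_\delta=L_\delta({\cal P}^V(\aleph_\omega))$ and letting $M_\delta[G]$ denote its $\mathbb P$-generic extension, I thus work entirely in $V[G]$ and compare $M_\alpha\prec M_\beta$ with $M_\alpha[G]\prec M_\beta[G]$, where Lemma \ref{level} identifies $M_\delta[G]$ with $L_\delta({\cal P}^{V[G]}(\aleph_\omega))$ for limit $\delta>\gamma$.

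The engine is a localized definability of forcing: for each formula $\varphi$ there is a fixed formula $\mathrm{Frc}_\varphi$ (with parameter $\mathbb P$) such that for every limit $\delta>\gamma$, every $p\in\mathbb P$, and all $\mathbb P$-names $\vec\sigma\in M_\delta$,
\[
   M_\delta\vDash\mathrm{Frc}_\varphi(p,\vec\sigma)\iff p\Vdash^{M_\delta}\varphi(\vec\sigma),
\]
together with the truth lemma $M_\delta[G]\vDash\varphi(\vec\sigma_G)\iff\exists p\in G\ M_\delta\vDash\mathrm{Frc}_\varphi(p,\vec\sigma)$. The point of choosing $\delta$ limit and $>\gamma$ is precisely Lemma \ref{level}: it guarantees that every element of $M_\delta[G]$ is $\sigma_G$ for some $\sigma\in M_\delta$, so the quantifiers in $\mathrm{Frc}_\varphi$ range over enough names and the forcing relation computed inside the set $M_\delta$ decides the theory of $M_\delta[G]$ correctly. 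For the forward direction, assume $M_\alpha\prec M_\beta$; given $\varphi$ and $\vec a=\vec\sigma_G\in M_\alpha[G]$ with $\vec\sigma\in M_\alpha$, the truth lemma turns $M_\alpha[G]\vDash\varphi(\vec a)$ into $\exists p\in G\ M_\alpha\vDash\mathrm{Frc}_\varphi(p,\vec\sigma)$. Since each $\mathrm{Frc}_\varphi(p,\vec\sigma)$ is first order with parameters $p,\vec\sigma,\mathbb P\in M_\alpha$, elementarity reflects it to $M_\beta$ for each fixed $p$, so the two existential statements over $p\in G$ agree, and the truth lemma at $\beta$ gives $M_\beta[G]\vDash\varphi(\vec a)$. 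As $\vec a$ ranges over $M_\alpha[G]$ this yields $M_\alpha[G]\prec M_\beta[G]$.

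The backward direction is the delicate one, since descending elementarity is not captured by forcing relations, which only see the extension and may be satisfied because forcing creates new witnesses. Here I would invoke the generic embedding: by elementarity and Lemma \ref{level}, $j$ sends $M_\delta$ to $L_{j(\delta)}({\cal P}(\aleph_\omega))^{L({\cal P}(\aleph_\omega))[G]}=M_{j(\delta)}[G]$, so $j\upharpoonright M_\delta\colon M_\delta\prec M_{j(\delta)}[G]$; one can then transfer the statement ``$L_\alpha\prec L_\beta$'' through $j$ and compare it with the forward direction applied at the images. Alternatively one relativizes $\mathrm{Frc}$ to the ground model, using that $M_\delta$ is definable inside its set-forcing extension $M_\delta[G]$, so that ground satisfaction $M_\delta\vDash\varphi(\vec b)$ is itself expressed by a forcing statement that can be reflected as above. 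The main obstacle in either route is the ordinal bookkeeping: one must ensure that all the names, the relation $\mathrm{Frc}_\varphi$, and (in the backward direction) the ground-model definition are evaluated at levels that $L_\alpha\prec L_\beta$ (respectively $M_\alpha[G]\prec M_\beta[G]$) actually controls. This is exactly what the hypothesis $\mathbb P,G,{\cal P}^V(\aleph_\omega)\in L_\gamma({\cal P}(\aleph_\omega))^{V[G]}$ together with Lemma \ref{level} is there to secure, and checking that it goes through uniformly for all limit $\alpha,\beta>\gamma$ is the crux of the proof.
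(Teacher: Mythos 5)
Your forward direction is essentially the paper's: the paper simply cites the usual lifting lemma applied to the inclusion $L_\alpha({\cal P}(\aleph_\omega))\prec L_\beta({\cal P}(\aleph_\omega))$ (the same $G$ is generic over both levels since $\mathbb{P}\in L_\alpha$), and then uses Lemma \ref{level} plus absoluteness of satisfaction; your unfolding of that lemma via a localized $\mathrm{Frc}_\varphi$ and the truth lemma is the standard proof of the same fact, so nothing to object to there.

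The backward direction is where your proposal stops short. You correctly identify it as the delicate step, but you offer two candidate routes without carrying either out, and the first one (transferring through $j$) does not work as stated: $j$ sends $L_\alpha({\cal P}(\aleph_\omega))$ to $L_{j(\alpha)}({\cal P}(\aleph_\omega))^{V[G]}$, so applying $j$ to ``$L_\alpha\prec L_\beta$'' only yields information about the levels $j(\alpha),j(\beta)$ of the \emph{extension}, which is neither the hypothesis you are given (elementarity at $\alpha,\beta$ in the extension) nor the conclusion you want (elementarity at $\alpha,\beta$ in the ground model). Your second, ``alternative'' route is the paper's actual argument, and it is simpler than your description suggests: no forcing relation and no ground-model-definability theorem are needed. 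One just sets $X={\cal P}^V(\aleph_\omega)$ and observes that, by the hypothesis on $\gamma$ together with Lemma \ref{level}, $X$ is an \emph{element} of both $L_\alpha({\cal P}(\aleph_\omega))[G]$ and $L_\beta({\cal P}(\aleph_\omega))[G]$, and by absoluteness of relative constructibility $L_\alpha(X)=(L(X))^{L_\alpha({\cal P}(\aleph_\omega))[G]}$ (likewise at $\beta$). Hence ``$L(X)\vDash\varphi(a_1,\dots,a_n)$'' is a single first-order statement with parameters $X,a_1,\dots,a_n$ that the assumed elementarity $L_\alpha({\cal P}(\aleph_\omega))[G]\prec L_\beta({\cal P}(\aleph_\omega))[G]$ transfers directly, giving $L_\alpha(X)\prec L_\beta(X)$. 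You should commit to this route and make explicit that the availability of ${\cal P}^V(\aleph_\omega)$ as a parameter below level $\gamma$ is exactly what it rests on; as written, the step you call ``the crux'' is left unverified.
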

\begin{proof}
 From left to right: By the usual lifting lemma\footnote{It says that if $j:M\prec N$, $\mathbb{P}\in M$, $G$ $\mathbb{P}$-generic, $H$ $j(\mathbb{P})$-generic and $j''G\subseteq H$, then $j$ extends to $M[G]\prec N[H]$}, as $\mathbb{P}\in L_\alpha({\cal P}(\aleph_\omega))$, $L_\alpha({\cal P}(\aleph_\omega))[G]\prec L_\beta({\cal P}(\aleph_\omega))[G]$. But by Lemma \ref{level} $L_\alpha({\cal P}(\aleph_\omega))[G]=L_\alpha({\cal P}(\aleph_\omega))^{V[G]}$ and $L_\beta({\cal P}(\aleph_\omega))[G]=L_\beta({\cal P}(\aleph_\omega))^{V[G]}$, so 
\begin{equation*}
		   V[G]\vDash L_\alpha({\cal P}(\aleph_\omega))\prec L_\beta({\cal P}(\aleph_\omega)).
\end{equation*}
but then by absoluteness one direction is proved.

On the other hand, suppose that
     \begin{equation*}
		   L({\cal P}(\aleph_\omega))[G]\vDash L_\alpha({\cal P}(\aleph_\omega))\prec L_\beta({\cal P}(\aleph_\omega)).
		 \end{equation*}
As before, this is equivalent to $L_\alpha({\cal P}(\aleph_\omega))^{V[G]}\prec L_\beta({\cal P}(\aleph_\omega))^{V[G]}$. Let $X={\cal P}(\aleph_\omega)$. As $X\in L_\alpha({\cal P}(\aleph_\omega))^{V[G]}$, by absoluteness of constructibility and Lemma \ref{level}, 
\begin{equation*}
		 L_\alpha(X)=(L(X))^{L_\alpha({\cal P}(\aleph_\omega)[G]},
		\end{equation*}
	and the same holds for $\beta$. So for any formula $\varphi$ and any $a_1,\dots,a_n\in L_\alpha(X)$,
	\begin{multline*}
	 L_\alpha(X)\vDash\varphi(a_1,\dots,a_n)\text{ iff}\\
	 \text{iff } L_\alpha({\cal P}(\aleph_\omega)[G]\vDash(L(X)\vDash\varphi(a_1,\dots,a_n))\\
	 \text{iff } L_\beta({\cal P}(\aleph_\omega)[G]\vDash(L(X)\vDash\varphi(a_1,\dots,a_n))\\
	 \text{iff } L_\beta(X)\vDash\varphi(a_1,\dots,a_n).
	\end{multline*}
\end{proof}
	
	All the other requirements have the effect of making the generic I0 more similar to the real one: for example, if $j:L(V_{\lambda+1})\prec L(V_{\lambda+1})$ then it must be that $j(\lambda)=\lambda$, and this is a fundamental point for I0, and because of point (2) $j(\aleph_\omega)=\aleph_\omega$ (this will also be crucial to prove the J\'{o}nsson-ness of key cardinals). Point (1) gives even more:
	
	\begin{lem}
\label{theta}
 Suppose generic I0 at $\aleph_\omega$. Then $\Theta^{V[G]}=\Theta$ and $j(\Theta)=\Theta$.
\end{lem}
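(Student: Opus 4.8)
The plan is to notice first that the two conclusions are really one. The ordinal $\Theta$ is defined inside $L({\cal P}(\aleph_\omega))$ by a formula (it is the least ordinal onto which ${\cal P}(\aleph_\omega)$ admits no surjection), so by elementarity of $j:L({\cal P}(\aleph_\omega))\prec L({\cal P}(\aleph_\omega))[G]$ the value $j(\Theta)$ is exactly that same ordinal as computed in the target model $L({\cal P}(\aleph_\omega))[G]$, relative to ${\cal P}(j(\aleph_\omega))={\cal P}(\aleph_\omega)$ (using Definition \ref{genericI0}(2)). By Lemma \ref{equal}, $L({\cal P}(\aleph_\omega))[G]=(L({\cal P}(\aleph_\omega)))^{V[G]}$, and in that model the power set of $\aleph_\omega$ is ${\cal P}^{V[G]}(\aleph_\omega)$ by Definition \ref{genericI0}(2) and (3). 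Hence $j(\Theta)=\Theta^{V[G]}$, and it suffices to prove $\Theta^{V[G]}=\Theta$.

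For the inequality $\Theta\le\Theta^{V[G]}$ I would use that $L({\cal P}(\aleph_\omega))\subseteq (L({\cal P}(\aleph_\omega)))^{V[G]}$. This holds because ${\cal P}^V(\aleph_\omega)\subseteq{\cal P}^{V[G]}(\aleph_\omega)$ and ${\cal P}^V(\aleph_\omega)\in L({\cal P}(\aleph_\omega))\subseteq L({\cal P}(\aleph_\omega))[G]=(L({\cal P}(\aleph_\omega)))^{V[G]}$. Given any $\alpha<\Theta$ with a witnessing surjection $\pi:{\cal P}^V(\aleph_\omega)\twoheadrightarrow\alpha$ in $L({\cal P}(\aleph_\omega))$, I extend $\pi$ to a surjection $\bar\pi:{\cal P}^{V[G]}(\aleph_\omega)\twoheadrightarrow\alpha$ by sending every new subset to $0$; since $\bar\pi$ is definable from $\pi$ and ${\cal P}^V(\aleph_\omega)$, both lying in $(L({\cal P}(\aleph_\omega)))^{V[G]}$, we get $\alpha<\Theta^{V[G]}$.

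The substance is the reverse inequality $\Theta^{V[G]}\le\Theta$, and this is where I expect the only real work. Fix $\alpha<\Theta^{V[G]}$ with a surjection $\rho:{\cal P}^{V[G]}(\aleph_\omega)\twoheadrightarrow\alpha$ in $(L({\cal P}(\aleph_\omega)))^{V[G]}=L({\cal P}(\aleph_\omega))[G]$. By Definition \ref{genericI0}(3) every element of ${\cal P}^{V[G]}(\aleph_\omega)$ is $\sigma_G$ for some $\mathbb{P}$-name $\sigma$ coded by an element of ${\cal P}^V(\aleph_\omega)$, so the decode-and-evaluate map $D:{\cal P}^V(\aleph_\omega)\twoheadrightarrow{\cal P}^{V[G]}(\aleph_\omega)$ is a surjection lying in $L({\cal P}(\aleph_\omega))[G]$. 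Thus $h:=\rho\circ D:{\cal P}^V(\aleph_\omega)\twoheadrightarrow\alpha$ is a surjection in $L({\cal P}(\aleph_\omega))[G]$, so it has a name $\dot h\in L({\cal P}(\aleph_\omega))$. The key step is to read off a ground-model surjection by forcing: in $L({\cal P}(\aleph_\omega))$ define a partial map $g:{\cal P}(\aleph_\omega)\times\mathbb{P}\to\alpha$ by setting $g(a,p)=\xi$ whenever $p\Vdash\dot h(\check a)=\check\xi$. Because $h$ is a genuine surjection in $V[G]$, for each $\xi<\alpha$ there are $a$ and some $p\in G$ forcing $\dot h(\check a)=\check\xi$, so $g$ is onto $\alpha$; and $g\in L({\cal P}(\aleph_\omega))$ since the forcing relation is definable there and $\dot h,\mathbb{P}\in L({\cal P}(\aleph_\omega))$. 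Finally I absorb the extra factor $\mathbb{P}$ using Definition \ref{genericI0}(1): compose $g$ with a ground-model surjection ${\cal P}(\aleph_\omega)\twoheadrightarrow{\cal P}(\aleph_\omega)\times{\cal P}(\aleph_\omega)$ (a bijection coming from a pairing function on $\aleph_\omega$) followed by $\mathrm{id}\times\pi$, where $\pi:{\cal P}(\aleph_\omega)\twoheadrightarrow\mathbb{P}$. This yields a surjection ${\cal P}(\aleph_\omega)\twoheadrightarrow\alpha$ inside $L({\cal P}(\aleph_\omega))$, so $\alpha<\Theta$. The main obstacle is precisely this pullback: making sure that $\dot h$ can be taken inside $L({\cal P}(\aleph_\omega))$ and that the resulting $g$ is genuinely surjective there, which is where conditions (1) and (3) of Definition \ref{genericI0} are doing the essential work of keeping $\mathbb{P}$ and the name-space ``small'' relative to ${\cal P}(\aleph_\omega)$.
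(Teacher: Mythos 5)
Your proposal is correct and follows essentially the same route as the paper: the paper also pulls the generic surjection back to the ground model by defining $e(q,a)=\alpha$ iff $q\Vdash\tau(a)=\alpha$ on $\mathbb{P}\times{\cal P}(\aleph_\omega)$ and then absorbs the $\mathbb{P}$ factor via Definition \ref{genericI0}(1). You simply make explicit some steps the paper leaves implicit (the reduction of $j(\Theta)=\Theta$ to $\Theta^{V[G]}=\Theta$, the easy inequality, and the decode map coming from Definition \ref{genericI0}(3)).
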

\begin{proof}
  Let $X={\cal P}(\aleph_\omega)$. Suppose not, so there exists a $p\in\mathbb{P}$ and a name $\tau$ such that $p\vDash\tau:\hat{X}\to\hat{\Theta}$. Then define (in $V$) $e:\mathbb{P}\times{\cal P}(\aleph_\omega)\to\Theta$ as $e(q,a)=\alpha$ iff $q\vDash\tau(a)=\alpha$. But then $e$ is a surjection: for every $\beta<\Theta$ there exists $a\in{\cal P}(\aleph_\omega)$ and $q\leq p$ such that $q\vDash\tau(a)=\beta$. By Definition \ref{genericI0}(1), this induces a surjection $e^+:{\cal P}(\aleph_\omega)\twoheadrightarrow\Theta$, contradiction.
\end{proof}
  
 I0 is a strengthening of I1, that is $j:V_{\lambda+1}\prec V_{\lambda+1}$: as already noticed, point (3) of the definition reflects this, as it implies that $j({\cal P}(\aleph_\omega))={\cal P}(\aleph_\omega)^{L({\cal P}(\aleph_\omega))[G]}={\cal P}(\aleph_\omega)[G]$, and therefore $j\upharpoonright {\cal P}(\aleph_\omega):{\cal P}(\aleph_\omega)\prec {\cal P}(\aleph_\omega)[G]$, that is a nice definition for generic I1. Finally, point (5) is there to give the necessary amenability for $j$, that in I0 is for free:
  
  \begin{lem}
  \label{amenability}
  Suppose that generic I0 holds at $\aleph_\omega$ and let $G$ and $j$ witness that. Then for unbounded (for all) $\alpha<\Theta$, $j\upharpoonright L_\alpha({\cal P}(\aleph_\omega))\in L({\cal P}(\aleph_\omega))[G]$.
 \end{lem}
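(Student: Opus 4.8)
The plan is to build $j\upharpoonright L_\alpha(\mathcal{P}(\aleph_\omega))$ in three stages of increasing generality, seeded by the amenability condition Definition \ref{genericI0}(5). First I would show $j\upharpoonright H(\aleph_\omega)\in L(\mathcal{P}(\aleph_\omega))[G]$; then lift this to $j\upharpoonright\mathcal{P}(\aleph_\omega)$; and finally, using that every element of a good level is definable from a single element of $\mathcal{P}(\aleph_\omega)$, lift it to $j\upharpoonright L_\alpha(\mathcal{P}(\aleph_\omega))$. It suffices to treat an unbounded class of $\alpha$: if $\alpha<\alpha'$ and $j\upharpoonright L_{\alpha'}(\mathcal{P}(\aleph_\omega))\in L(\mathcal{P}(\aleph_\omega))[G]$, then $j\upharpoonright L_\alpha(\mathcal{P}(\aleph_\omega))$ is its restriction to $L_\alpha(\mathcal{P}(\aleph_\omega))\in L(\mathcal{P}(\aleph_\omega))[G]$, hence also in the model by Separation, which explains the ``(for all)''.

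For the seed, fix in $V$ a code for $H(\aleph_\omega)$ as a subset of $\aleph_\omega$ (available since \GCH{} holds below $\aleph_\omega$), so $H(\aleph_\omega)\in L(\mathcal{P}(\aleph_\omega))$. By Definition \ref{genericI0}(5) the set $H^{V[G]}(\aleph_\omega)$ has a name in $L(\mathcal{P}(\aleph_\omega))$, hence lies in $L(\mathcal{P}(\aleph_\omega))[G]$, and $j\upharpoonright H(\aleph_\omega)$ is an elementary map into it. Coding $j\upharpoonright H(\aleph_\omega)$ via fixed enumerations of its domain and codomain turns it into a subset of $\aleph_\omega$ lying in $\mathcal{P}(\aleph_\omega)^{V[G]}$; by Definition \ref{genericI0}(3) this subset has a name in $\mathcal{P}(\aleph_\omega)$, so it belongs to $\mathcal{P}(\aleph_\omega)^{V[G]}\subseteq L(\mathcal{P}(\aleph_\omega))[G]$, and decoding inside the model yields $j\upharpoonright H(\aleph_\omega)\in L(\mathcal{P}(\aleph_\omega))[G]$. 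Next I would fix any increasing $\omega$-sequence $\langle\mu_n:n\in\omega\rangle\in V$ cofinal in $\aleph_\omega$; by elementarity $\langle j(\mu_n):n\in\omega\rangle$ is cofinal in $j(\aleph_\omega)=\aleph_\omega$. For $a\in\mathcal{P}(\aleph_\omega)$ each $a\cap\mu_n\in H(\aleph_\omega)$ and $j(a\cap\mu_n)=j(a)\cap j(\mu_n)$, so $j(a)=\bigcup_{n\in\omega}j(a\cap\mu_n)$. Since the right-hand side is computed from $j\upharpoonright H(\aleph_\omega)$ by a countable union, $a\mapsto j(a)$ is definable in $L(\mathcal{P}(\aleph_\omega))[G]$, giving $j\upharpoonright\mathcal{P}(\aleph_\omega)\in L(\mathcal{P}(\aleph_\omega))[G]$.

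For the last stage, fix $\alpha<\Theta$ and, using Lemma \ref{ubdgood}, a good ordinal $\beta$ with $\alpha<\beta<\Theta$ chosen large enough for Lemma \ref{level} to apply; then $j(\beta)<j(\Theta)=\Theta$ by Lemma \ref{theta} and $j(L_\beta(\mathcal{P}(\aleph_\omega)))=L_{j(\beta)}(\mathcal{P}(\aleph_\omega))[G]$ by Lemmas \ref{equal} and \ref{level}. Since $\beta$ is good, each $x\in L_\beta(\mathcal{P}(\aleph_\omega))$ is the unique solution in $L_\beta(\mathcal{P}(\aleph_\omega))$ of $\varphi(\,\cdot\,,a)$ for some formula $\varphi$ and some $a\in\mathcal{P}(\aleph_\omega)$, and by elementarity $j(x)$ is then the unique solution in $L_{j(\beta)}(\mathcal{P}(\aleph_\omega))[G]$ of $\varphi(\,\cdot\,,j(a))$. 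Hence, inside $L(\mathcal{P}(\aleph_\omega))[G]$, one can define $j\upharpoonright L_\beta(\mathcal{P}(\aleph_\omega))$ as the set of pairs $(x,x')$ such that for every formula $\varphi$ and every $a\in\mathcal{P}(\aleph_\omega)$, if $x$ is the unique solution of $\varphi(\,\cdot\,,a)$ in $L_\beta(\mathcal{P}(\aleph_\omega))$ then $x'$ is the unique solution of $\varphi(\,\cdot\,,j(a))$ in $L_{j(\beta)}(\mathcal{P}(\aleph_\omega))[G]$; the parameters are $\beta$, $j(\beta)$ and $j\upharpoonright\mathcal{P}(\aleph_\omega)$, all elements of $L(\mathcal{P}(\aleph_\omega))[G]$. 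Goodness guarantees at least one defining pair $(\varphi,a)$ exists for each $x$, and elementarity guarantees every such pair returns the same $x'=j(x)$, so this correctly and definably captures $j\upharpoonright L_\beta$. By Separation $j\upharpoonright L_\beta(\mathcal{P}(\aleph_\omega))\in L(\mathcal{P}(\aleph_\omega))[G]$, and restricting to $L_\alpha(\mathcal{P}(\aleph_\omega))$ finishes the proof.

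The step I expect to be most delicate is the definability argument in the last stage: because $\mathcal{P}(\aleph_\omega)$ need not be well-orderable in $L(\mathcal{P}(\aleph_\omega))[G]$, one cannot \emph{choose} a defining real $a_x$ for each $x$ and so cannot define $j$ simply by ``applying $j$ to the chosen parameter.'' The quantifier-over-all-defining-pairs formulation is what circumvents this, trading the missing choice function for elementarity, which forces coherence across all valid defining pairs. A secondary point requiring care is the bookkeeping in the seed step, namely verifying that the code of $j\upharpoonright H(\aleph_\omega)$ genuinely lands in $\mathcal{P}(\aleph_\omega)^{V[G]}$ so that Definition \ref{genericI0}(3) applies; this is precisely the role for which condition (5) was built into the definition of generic I0.
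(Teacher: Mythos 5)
Your proposal is correct and follows essentially the same route as the paper's (much terser) proof: restrict to good ordinals via Lemma \ref{ubdgood}, recover $j$ on ${\cal P}(\aleph_\omega)$ from $j\upharpoonright H(\aleph_\omega)$ by the union formula $j(a)=\bigcup_{n}j(a\cap\aleph_n)$, and then use goodness to recover $j$ on the whole level. Your expansion of the last step --- quantifying over all defining pairs rather than choosing one, with elementarity forcing coherence --- is exactly the detail the paper leaves implicit, and it is handled correctly.
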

 \begin{proof}
  By Lemma \ref{ubdgood}, we can assume that $\alpha$ is good. Then $j$ is defined only by its behavior on ${\cal P}(\aleph_\omega)$, that in turn is defined by its behavior on $H(\aleph_\omega)$, because 
	\begin{equation*}
	  j(a)=j(\bigcup_{n\in\omega}(a\cap\aleph_n))=\bigcup_{n\in\omega}j(a\cap\aleph_n).
	\end{equation*}
	 By Definition \ref{genericI0}(5) $j\upharpoonright H(\aleph_\omega)\in L({\cal P}(\aleph_\omega))[G]$, and therefore this is true also for $j\upharpoonright L_\alpha({\cal P}(\aleph_\omega))$.
 \end{proof}
  
\section{Consequences of generic I0}

 Let us recall the theorem that summarizes the consequences of generic I0 at $\aleph_\omega$:

 \begin{teo}
 \label{thm:main}
  Suppose that generic I0 holds at $\aleph_\omega$ as witnessed by a forcing notion $\mathbb{P}$. Then $\aleph_\omega$ is J\'{o}nsson. Moreover, in $L({\cal P}(\aleph_\omega))$ (see below for all the definitions):
	\begin{itemize}
	 \item if $\aleph_{\omega+1}^{V[G]}=\aleph_{\omega+1}$, then $\aleph_{\omega+1}$ is $\omega$-strongly measurable and J\'{o}nsson;
	 \item $\Theta$ is ``inaccessible'', in the sense that it is regular and if $\alpha<\Theta$ then there is a surjection from ${\cal P}(\aleph_\omega)$ to ${\cal P}(\alpha)$;
	 \item if $\mathbb{P}$ is $\omega$-closed, then $\Theta$ is limit of $\omega$-strongly measurable cardinals that are measurable.
	\end{itemize}
 \end{teo}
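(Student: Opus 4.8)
\emph{Setup and the two Jónsson conclusions.} Throughout write $\kappa_0=\crt(j)$, a genuine (measurable-type) cardinal, so $\omega<\kappa_0<\aleph_\omega$, and recall $j\colon L({\cal P}(\aleph_\omega))\prec L({\cal P}(\aleph_\omega))[G]$. For $\aleph_\omega$ I would apply Remark~\ref{jonsson} directly: $j(\aleph_\omega)=\aleph_\omega$ by Definition~\ref{genericI0}(2), and $j''\aleph_\omega\in L({\cal P}(\aleph_\omega))[G]$ because $\aleph_\omega\in L_\alpha({\cal P}(\aleph_\omega))$ for small $\alpha$ and $j\upharpoonright L_\alpha({\cal P}(\aleph_\omega))\in L({\cal P}(\aleph_\omega))[G]$ by Lemma~\ref{amenability}. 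Any structure on $\aleph_\omega$ in a countable language is coded, via pairing, by a single element of ${\cal P}(\aleph_\omega)\subseteq L({\cal P}(\aleph_\omega))$; then $j''{\cal U}=j({\cal U})\cap j''\aleph_\omega$ is a proper elementary substructure of $j({\cal U})$ of size $\aleph_\omega$ in the target, so elementarity reflects its existence down, and since the witnessing submodel lies in $L({\cal P}(\aleph_\omega))\subseteq V$ this shows $\aleph_\omega$ is Jónsson in $V$. For $\aleph_{\omega+1}$ the only extra step is $j(\aleph_{\omega+1})=\aleph_{\omega+1}$: first $\aleph_{\omega+1}^{L({\cal P}(\aleph_\omega))}=\aleph_{\omega+1}$, since a collapsing surjection $\aleph_\omega\twoheadrightarrow\aleph_{\omega+1}$ in $L({\cal P}(\aleph_\omega))$ would be coded by a subset of $\aleph_\omega$ (using closure under $\aleph_\omega$-sequences), hence lie in $V$; by elementarity $j(\aleph_{\omega+1})=(\aleph_\omega^+)^{M}$, and the hypothesis $\aleph_{\omega+1}^{V[G]}=\aleph_{\omega+1}$ leaves no cardinal of $M\subseteq V[G]$ strictly between $\aleph_\omega$ and $\aleph_{\omega+1}$, so $(\aleph_\omega^+)^M=\aleph_{\omega+1}$. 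With $j''\aleph_{\omega+1}\in M$ (Lemma~\ref{amenability}) and $j''\aleph_{\omega+1}\subsetneq\aleph_{\omega+1}$ (as $\kappa_0<\aleph_{\omega+1}$), Remark~\ref{jonsson} gives that $\aleph_{\omega+1}$ is Jónsson in $L({\cal P}(\aleph_\omega))$.

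\emph{$\omega$-strong measurability of $\aleph_{\omega+1}$.} This is the technical core. Because $\aleph_\omega,\aleph_{\omega+1}$ are consecutive, any witness $\gamma$ must have $\gamma<\aleph_\omega$, whence $2^\gamma=\gamma^+<\aleph_\omega<\aleph_{\omega+1}$ by \GCH{} below $\aleph_\omega$; the task reduces to the strong non-choice statement that $E^{\aleph_{\omega+1}}_\omega$ admits no partition into $\gamma$ pairwise disjoint stationary pieces. I would first note the obstruction to the naive method: since $\omega<\kappa_0$, $j$ is continuous at every $\eta$ of cofinality $\omega$, so $\sup j''\eta=j(\eta)$, every $\omega$-cofinal limit point of $j''\aleph_{\omega+1}$ lies in $\ran(j)$, and the ultrafilter $\{A:\delta\in j(A)\}$ attached to such a seed $\delta=j(\eta)$ is merely the principal ultrafilter at $\eta$. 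Hence the club measure cannot be read off a single seed, and I would instead argue directly: given a supposed partition $\langle S_\alpha:\alpha<\gamma\rangle\in L({\cal P}(\aleph_\omega))$, code the colouring $c\colon E^{\aleph_{\omega+1}}_\omega\to\gamma$ as a subset of $\aleph_\omega$ (ordinals below $\aleph_\omega^+$ and their cofinal $\omega$-sequences being coded in ${\cal P}(\aleph_\omega)$), apply $j\upharpoonright{\cal P}(\aleph_\omega)$, and run a reflection/homogeneity argument — the analogue, in the presence of $j$, of the determinacy proof that the club filter on $\omega_1$ is an ultrafilter — forcing $c$ to be constant on a club and contradicting the stationarity of the $S_\alpha$. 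This is modelled on Woodin's proof that $\lambda^+$ is measurable under I0 (Lemma~22 in \cite{Woodin}, Theorem~6.7 in \cite{Dimonte0}), transported through the reflection structure of Lemma~\ref{lifting}. Once no such partition exists, $\aleph_{\omega+1}$ is $\omega$-strongly measurable; and since $L({\cal P}(\aleph_\omega))\vDash\DC_{\aleph_\omega}$ by Remark~\ref{DC}, the club filter on $\aleph_{\omega+1}$ is $\aleph_{\omega+1}$-complete, so Remark~\ref{stronglyornot} upgrades this to measurability.

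\emph{$\Theta$ and the final clause.} Regularity of $\Theta$ is standard (Exercise 28.19 in \cite{Kanamori}). For the ``strong limit'' clause, given $\alpha<\Theta$ I would fix a surjection ${\cal P}(\aleph_\omega)\twoheadrightarrow\alpha$ and promote it to a surjection ${\cal P}(\aleph_\omega)\twoheadrightarrow{\cal P}(\alpha)$ by a Coding-Lemma argument; the Coding Lemma holds in $L({\cal P}(\aleph_\omega))$ for the same reasons it holds in $L(V_{\lambda+1})$ under I0, the embedding $j$ (with $j(\Theta)=\Theta$ by Lemma~\ref{theta}) supplying the scale-like structure that replaces determinacy. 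Finally, if $\mathbb{P}$ is $\omega$-closed, Lemma~\ref{omegaclosed} identifies the two candidate targets and restores full strength; I would then localise the $\omega$-strong measurability argument to good levels $L_\beta({\cal P}(\aleph_\omega))$ (Lemma~\ref{ubdgood}) and use the reflection of Lemma~\ref{lifting} to produce, cofinally in $\Theta$, cardinals $\delta$ carrying the club ultrafilter on $E^\delta_\omega$, each of which is measurable again by Remark~\ref{stronglyornot} together with $\DC_{\aleph_\omega}$.

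\emph{Main obstacle.} The genuinely hard step is the non-splitting statement for $E^{\aleph_{\omega+1}}_\omega$: the continuity of $j$ at $\omega$-cofinal ordinals defeats any seed-based construction of the measure, so the entire weight falls on the reflection/homogeneity argument. This is precisely the delicate part of Woodin's I0 analysis, and here it must be carried out through the generic reflection apparatus (Lemmas~\ref{level} and~\ref{lifting}) rather than through determinacy; its faithful adaptation, and the localisation needed for the cofinally-many measurables below $\Theta$, are where I expect the real work to lie.
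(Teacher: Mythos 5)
Your treatment of the two J\'{o}nsson claims matches the paper's (Remark~\ref{jonsson} plus Lemma~\ref{amenability}), but for the three substantive clauses the proposal has concrete gaps, and in one place a step that would fail. For the $\omega$-strong measurability of $\aleph_{\omega+1}$ you stop at ``run a reflection/homogeneity argument'' modelled on the determinacy proof; the paper's actual argument (Proposition~\ref{measure}) is short and quite different in flavour, and the ``obstruction'' you flag is in fact the engine of it: precisely because $j$ is continuous at $\omega$-cofinal ordinals, the set $D=\{\eta<\aleph_{\omega+1}:j(\eta)=\eta\}$ is an $\omega$-club (it lies in $L({\cal P}(\aleph_\omega))[G]$ by amenability, and is unbounded since $\aleph_{\omega+1}^{V[G]}$ is regular). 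One then takes a putative partition $\langle S_\alpha:\alpha<\crt(j)\rangle$ of $E^{\aleph_{\omega+1}}_\omega$ into stationary sets, applies $j$ to get $\langle T_\alpha:\alpha<j(\crt(j))\rangle$, and observes that $T_{\crt(j)}$ must meet $D$, while every $\eta\in D$ lies in $T_\alpha$ for the $\alpha<\crt(j)$ with $\eta\in S_\alpha$ --- a contradiction. No homogeneity or constancy-on-a-club argument is needed, and no single-seed ultrafilter is ever constructed. Similarly, asserting that the Coding Lemma ``holds for the same reasons'' as in $L(V_{\lambda+1})$ skips the real work: the prerequisite is Proposition~\ref{cofinalfixed} (fixed points of $j$ cofinal in $\Theta$), whose proof requires establishing \L{}os' Theorem for the ultrafilter $U$ derived from the seed $(H(\aleph_\omega),j\upharpoonright H(\aleph_\omega))$, and the paper is explicit that Woodin's reflection trick does not transfer and a new argument (Claim~\ref{allinimage}, via the sets ${\cal P}(\aleph_\omega)_N$) is needed.

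The step that would actually fail is your final clause. You propose to obtain the measurability of the $\omega$-strongly measurable cardinals $\kappa$ below $\Theta$ from Remark~\ref{stronglyornot} together with $\DC_{\aleph_\omega}$; but Remark~\ref{stronglyornot} needs the club filter on $\kappa$ to be $\kappa$-complete, which requires something like $\DC_{<\kappa}$, and these $\kappa$ are far above $\aleph_{\omega+1}$, so $\DC_{\aleph_\omega}$ is nowhere near enough --- the paper says exactly this and then builds an auxiliary filter $F_0'$ (generated by fixed-point sets $C_{(N,k)}$ for pairs $(N,k)\in{\cal E}$ with $k:N\prec H(\aleph_\omega)$) whose $\kappa$-completeness is proved from the Coding Lemma localised to $L_\kappa({\cal P}(\aleph_\omega))$ rather than from dependent choice. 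You also locate the use of $\omega$-closure in Lemma~\ref{omegaclosed}, whereas its essential role in this clause is to guarantee $({\cal P}(\aleph_\omega)_{H(\aleph_\omega)})^{V[G]}={\cal P}(\aleph_\omega)$, hence that $(H(\aleph_\omega),j\upharpoonright H(\aleph_\omega))\in j({\cal E})$ and ${\cal E}\neq\emptyset$. So while your overall decomposition of the theorem is the right one, the proposal does not yet contain proofs of its three hard components, and the completeness argument as stated would not go through.
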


 By Remark \ref{jonsson} and Lemma \ref{amenability} it is immediate to see that $\aleph_\omega$, and $\aleph_{\omega+1}$ when it is a fixed point, are J\'{o}nsson in $L({\cal P}(\aleph_\omega))$ . But the definition of $\aleph_\omega$ J\'{o}nsson involves only elements in ${\cal P}(\aleph_\omega)$, therefore $\aleph_\omega$ is J\'{o}nsson also in $V$.

  \begin{prop}
  \label{measure}
   Suppose that I0 holds at $\aleph_\omega$, witnessed by $G,\ j$. If $\aleph_{\omega+1}^{V[G]}=\aleph_{\omega+1}$, then $\aleph_{\omega+1}$ is $\omega$-strongly measurable in $L({\cal P}(\aleph_\omega))$.
  \end{prop}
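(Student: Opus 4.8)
The plan is to prove the Proposition by showing, inside $N=L({\cal P}(\aleph_\omega))$, that the club filter restricted to $E^\kappa_\omega$ (where $\kappa=\aleph_{\omega+1}$) is an \emph{ultrafilter}. This gives $\omega$-strong measurability with room to spare: it forbids even two disjoint sets stationary in $E^\kappa_\omega$, hence a fortiori any $\gamma$-sequence of them, for every admissible $\gamma$.

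First I would fix the setting. Write $\kappa_0=\crt(j)<\aleph_\omega$ and $\kappa_{n+1}=j(\kappa_n)$, so that the critical sequence is cofinal in $\aleph_\omega$ (as in the I0 case, using $j(\aleph_\omega)=\aleph_\omega$ from Definition \ref{genericI0}(2)). The key preliminary is that $\kappa$ is a fixed point. Since $N\subseteq V$ and every well-order of a proper initial segment of $\kappa$ is coded by a subset of $\aleph_\omega$, the successor of $\aleph_\omega$ in $N$ is exactly $\aleph_{\omega+1}^V=\kappa$; by Lemma \ref{equal} the codomain is $M=L({\cal P}(\aleph_\omega))[G]=L({\cal P}(\aleph_\omega))^{V[G]}$, where the hypothesis $\aleph_{\omega+1}^{V[G]}=\aleph_{\omega+1}$ makes the successor of $\aleph_\omega$ again $\kappa$; elementarity then yields $j(\kappa)=\kappa$. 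By Lemma \ref{amenability}, applied through a good ordinal $\alpha<\Theta$ with $\kappa\in L_\alpha({\cal P}(\aleph_\omega))$, we get $j\upharpoonright\kappa\in M$ and hence $j''\kappa\in M$; as $\kappa$ is regular and $j\upharpoonright\kappa$ is injective, $\sup(j''\kappa)=\kappa$, and since $\omega<\kappa_0$ the map $j$ is continuous at points of cofinality $\omega$, i.e. $j(\eta)=\sup(j''\eta)$ for $\eta\in E^\kappa_\omega$. Finally, by \GCH{} below $\aleph_\omega$ every $\gamma<\aleph_\omega$ satisfies $2^\gamma<\aleph_\omega<\kappa$ in $N$, so the cardinal-arithmetic clause in the definition of $\omega$-strongly measurable is automatic and the whole content is the non-splitting clause.

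The core step, and the main obstacle, is the dichotomy: every $S\subseteq E^\kappa_\omega$ in $N$ has either $S$ or $E^\kappa_\omega\setminus S$ containing a club relative to $E^\kappa_\omega$. I would attack this by manufacturing from $j$ a measure and showing it is \emph{exactly} the club filter. Concretely, I would isolate a seed $s\in E^\kappa_\omega$ of $M$ lying in $j(C)$ for every club $C\subseteq\kappa$ of $N$, put $U=\{S\subseteq E^\kappa_\omega:\ s\in j(S)\}$, and check that $U$ is a $\kappa$-complete nonprincipal ultrafilter extending the club filter. The delicate point is that this much does \emph{not} rule out disjoint stationary sets: one must show $U$ does not \emph{properly} extend the club filter, equivalently that every $U$-positive set is club-large, so that the nonstationary ideal on $E^\kappa_\omega$ is prime. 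This is exactly where the global reflection of the generic embedding enters: using Lemma \ref{lifting} to transport the relation ``$S$ contains a club'' among $N$, $M$ and $L({\cal P}(\aleph_\omega))[G]$, and analysing the ordinals of $M$ \emph{not} in $j''\kappa$ (the discontinuity points of $j$, all of cofinality $\ge\kappa_0$, as opposed to the $\omega$-cofinal continuity points), one should be able to decide membership in the club filter by a single application of $j$. This is the analogue, for $\aleph_{\omega+1}$ inside $L({\cal P}(\aleph_\omega))$, of the proof that $\lambda^+$ is measurable in $L(V_{\lambda+1})$ under I0 (Lemma 22 in \cite{Woodin}, Theorem 6.7 in \cite{Dimonte0}); I expect the construction of the seed together with the verification that its derived measure \emph{is} the club filter to be essentially the entire difficulty.

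Once the club filter on $E^\kappa_\omega$ is known to be an ultrafilter in $N$, I would conclude as in the splitting argument of Remark \ref{stronglyornot} (see \cite{Kafkoulis}): two disjoint sets each stationary in $E^\kappa_\omega$ would both be positive for this ultrafilter, which is absurd. Hence there is no $\gamma$-sequence of pairwise disjoint sets stationary in $E^\kappa_\omega$ for any $\gamma\ge 2$, in particular for a $\gamma<\aleph_\omega$ with $2^\gamma<\kappa$, and so $\kappa=\aleph_{\omega+1}$ is $\omega$-strongly measurable in $L({\cal P}(\aleph_\omega))$. Since the non-splitting statement quantifies only over subsets of $\kappa$ lying in $N$, the witness is read off inside $N$ directly (and, if verified in the range of $j$, transfers back to $N$ by elementarity).
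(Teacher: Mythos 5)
There is a genuine gap, and it sits exactly where you yourself flag it. Your plan is to show that the club filter restricted to $E^\kappa_\omega$ is an \emph{ultrafilter} in $L({\cal P}(\aleph_\omega))$, i.e.\ that there are not even two disjoint stationary subsets of $E^\kappa_\omega$. That is far stronger than what the Proposition asserts, and the step that would deliver it --- constructing a seed $s$, forming $U=\{S: s\in j(S)\}$, and then verifying that $U$ does not properly extend the club filter --- is precisely the part you leave as ``I expect \dots to be essentially the entire difficulty.'' Nothing in the generic I0 setup forces the derived measure of a single seed to coincide with the club filter on all of $E^\kappa_\omega$; even in the template you cite (Lemma 22 of \cite{Woodin}, and Remark \ref{stronglyornot} in this paper), what one actually obtains is that the club filter is an ultrafilter on \emph{some} stationary set, after first bounding the number of pairwise disjoint stationary sets. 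So the proposal, as written, is a reduction of the Proposition to an unproved and probably unprovable dichotomy, not a proof.

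The missing idea is that $\omega$-strong measurability only requires \emph{one} witness $\gamma$ with $2^\gamma<\kappa$, and the right choice is $\gamma=\crt(j)$, for which a short direct argument works. Suppose $\langle S_\alpha:\alpha<\crt(j)\rangle$ partitions $E^\kappa_\omega$ into stationary sets. Applying $j$ gives a partition $\langle T_\alpha:\alpha<j(\crt(j))\rangle$ of $(E^\kappa_\omega)^{V[G]}$ into stationary sets, and the point is that this partition has genuinely \emph{new} pieces, e.g.\ $T_{\crt(j)}$. By Lemma \ref{amenability} the set $D=\{\eta<\kappa:j(\eta)=\eta\}$ lies in $L({\cal P}(\aleph_\omega))[G]$, and since $\kappa=\aleph_{\omega+1}^{V[G]}$ is regular there, $D$ is an $\omega$-club; hence $D$ meets $T_{\crt(j)}$ in some $\eta$. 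But $\eta\in S_\alpha$ for some $\alpha<\crt(j)$, so $\eta=j(\eta)\in T_{j(\alpha)}=T_\alpha$, contradicting disjointness of $T_\alpha$ and $T_{\crt(j)}$. Your preliminary observations ($j(\kappa)=\kappa$, amenability of $j$ below $\kappa$, $2^\gamma<\kappa$ for $\gamma<\aleph_\omega$) are correct and are exactly the ingredients this argument needs, but the splitting bound has to be extracted from the \emph{image} of the partition meeting the club of fixed points, not from an identification of a derived ultrafilter with the club filter.
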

  
  It will be witnessed by $\gamma=\crt(j)$. As $L({\cal P}(\aleph_\omega))\vDash \DC_{\aleph_\omega}$ by Remark \ref{DC}, this will imply that $\aleph_{\omega+1}$ is measurable in $L({\cal P}(\aleph_\omega))$ by Remark \ref{stronglyornot}. 
  
  \begin{proof}
   We prove that $E^{\aleph_{\omega+1}}_\omega$ cannot be partitioned in $\crt(j)$ stationary sets. As $\aleph_\omega$ is strong limit, this suffices to prove the Proposition. Suppose not, and fix $\langle S_\alpha:\alpha<\crt(j)\rangle$ a partition of $E^{\aleph_{\omega+1}}_\omega$ in $\crt(j)$ stationary sets. Then 
	\begin{equation*}
	 j(\langle S_\alpha:\alpha<\crt(j)\rangle)=\langle T_\alpha:\alpha<j(\crt(j))\rangle
	\end{equation*}
	 is a partition of $(E^{\aleph_{\omega+1}}_\omega)^{V[G]}$ into $j(\crt(j))$ stationary sets. Consider $D=\{\alpha<\aleph_{\omega+1}:j(\alpha)=\alpha\}$. By Lemma \ref{amenability}, as $j\upharpoonright L_{\aleph_{\omega+1}}({\cal P}(\aleph_\omega))\in L({\cal P}(\aleph_\omega))[G]$, $D\in L({\cal P}(\aleph_\omega))[G]$. As ${\cal P}^{V[G]}(\aleph_\omega)$ can be considered closed under $\aleph_\omega$-sequences, $\aleph_{\omega+1}^{V[G]}$ is regular, and therefore $D$ is an $\omega$-club of $\aleph_{\omega+1}=\aleph_{\omega+1}^{V[G]}$, so there exists $\eta\in D\cap T_{\crt(j)}$. But there must be some $\alpha$ such that $\eta\in S_\alpha$, and so $\eta=j(\eta)\in T_{j(\alpha)}$, contradiction.
  \end{proof}
	
	The rest of Theorem \ref{thm:main} deals with ordinals under $\Theta$. The following lemma will be therefore very useful:
	
	\begin{prop}
 \label{cofinalfixed}
 $\{\alpha<\Theta:j(\alpha)=\alpha\}$ is cofinal in $\Theta$.
\end{prop}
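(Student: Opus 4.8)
The plan is to produce, for an arbitrary $\beta<\Theta$, a fixed point of $j$ in the interval $[\beta,\Theta)$, by iterating $j$ on the \emph{ordinal} $\beta$ and taking an $\omega$-limit. Since $\Ord\subseteq L({\cal P}(\aleph_\omega))$, the map $j$ is defined on every ordinal, so I would set $\beta_0=\beta$ and $\beta_{n+1}=j(\beta_n)$; note that this genuine iteration on ordinals is unproblematic, even though $j$ is not defined on all of its codomain, because each $\beta_n$ is simply an ordinal in $\dom(j)$. As $j(\Theta)=\Theta$ by Lemma \ref{theta} and $j$ is order preserving, $\beta_n<\Theta$ forces $\beta_{n+1}=j(\beta_n)<j(\Theta)=\Theta$, so the whole sequence stays below $\Theta$. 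If $\beta_{n+1}=\beta_n$ for some $n$ we are done, since then $\beta_n$ is a fixed point above $\beta$; otherwise, as $j(\xi)\geq\xi$ for every ordinal $\xi$, the sequence is strictly increasing.

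Let $\alpha=\sup_n\beta_n$. Since $\Theta$ is regular and uncountable, this countable supremum satisfies $\alpha<\Theta$, and clearly $\alpha\geq\beta$. It remains to verify $j(\alpha)=\alpha$. The inequality $j(\alpha)\geq\sup_n j(\beta_n)=\sup_n\beta_{n+1}=\alpha$ is automatic; the real content is to rule out a jump, i.e.\ to show that $j$ is continuous at $\alpha$.

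For this I would first argue that the cofinal sequence $s=\langle\beta_n:n<\omega\rangle$ lies in $L({\cal P}(\aleph_\omega))$. Since $\alpha<\Theta$ there is, in $L({\cal P}(\aleph_\omega))$, a surjection $f:{\cal P}(\aleph_\omega)\twoheadrightarrow\alpha$; choosing in $V$ reals $a_n$ with $f(a_n)=\beta_n$ and coding $\langle a_n:n<\omega\rangle$ as a single element of ${\cal P}(\aleph_\omega)$ (using that ${\cal P}(\aleph_\omega)$ can be considered closed under $\omega$-sequences, as in the footnote to Lemma \ref{ubdgood}), the sequence $s$ becomes definable in $L({\cal P}(\aleph_\omega))$ from that code together with $f$, so $s\in L({\cal P}(\aleph_\omega))$. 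Now apply $j$: as $j$ fixes $\omega$ and each finite $n$, we get $j(s)=\langle j(\beta_n):n<\omega\rangle=\langle\beta_{n+1}:n<\omega\rangle$. Since $L({\cal P}(\aleph_\omega))\vDash$ ``$s$ is cofinal in the limit ordinal $\alpha$'', elementarity yields $L({\cal P}(\aleph_\omega))[G]\vDash$ ``$j(s)$ is cofinal in $j(\alpha)$''; hence $j(\alpha)=\sup_n\beta_{n+1}=\sup_n\beta_n=\alpha$, as desired.

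The only genuinely delicate step is placing the cofinal sequence $s$ inside $L({\cal P}(\aleph_\omega))$, since this is exactly what licenses the appeal to elementarity to transfer cofinality across $j$ and thereby force continuity at $\alpha$. Everything else is bookkeeping resting on three ingredients already available: the closure of ${\cal P}(\aleph_\omega)$ under $\omega$-sequences, the fact that $j(\Theta)=\Theta$, and the regularity of $\Theta$.
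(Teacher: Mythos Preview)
Your argument has a genuine gap at exactly the point you flag as ``delicate'': you have not justified that the sequence $s=\langle\beta_n:n<\omega\rangle$ lies in $L({\cal P}(\aleph_\omega))$. The iterates $\beta_{n+1}=j(\beta_n)$ are computed using $j$, which exists only in $V[G]$; so the function $n\mapsto\beta_n$ is a priori an object of $V[G]$, not of $V$. Your coding manoeuvre does not repair this: while each individual $a_n$ with $f(a_n)=\beta_n$ can be taken from ${\cal P}(\aleph_\omega)^V$, the \emph{sequence} $\langle a_n:n<\omega\rangle$ is assembled in $V[G]$ (its $n$-th term depends on $\beta_n$, which depends on $j$), and the closure of ${\cal P}(\aleph_\omega)$ under $\omega$-sequences referenced in the footnote to Lemma~\ref{ubdgood} is a closure in $V$, not a statement that $\omega$-sequences formed in $V[G]$ from ground-model reals must already be in $V$. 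That last statement is essentially $\omega$-closure of $\mathbb{P}$, which Definition~\ref{genericI0} does not assume. Without $s\in L({\cal P}(\aleph_\omega))$ you cannot apply $j$ to $s$ via elementarity, and so you cannot conclude $j(\alpha)=\sup_n j(\beta_n)$; nothing else in your outline forces continuity of $j$ at $\alpha$.

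This is precisely why the paper's proof takes a completely different and much longer route: it builds the $L({\cal P}(\aleph_\omega))$-ultrafilter $U$ derived from $j$ at the seed $(H(\aleph_\omega),j\upharpoonright H(\aleph_\omega))$, proves {\L}o\'s' Theorem for the generic ultrapower (a nontrivial step in the absence of choice, handled by Claim~\ref{allinimage}), and shows the ultrapower map $k$ agrees with $j$ below $\Theta$. Fixed points of $k$ are then produced cofinally by a Skolem-hull argument inside $L({\cal P}(\aleph_\omega))$ using parameters from a class $I$ of large fixed ordinals together with ${\cal P}(\aleph_\omega)$; the point is that $k(Z)\subseteq Z[G]$ for such hulls $Z$, so $k(\sup Z)=\sup Z$. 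This approach never needs to place a $j$-defined $\omega$-sequence into the ground model, which is exactly the obstacle your argument runs into. If you additionally assume $\mathbb{P}$ is $\omega$-closed your iteration argument does go through, but the proposition is stated and proved in the paper without that hypothesis.
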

\begin{proof}
 Let 
 \begin{equation*}
  U=\{X\subseteq {\cal P}(\aleph_\omega): X\in L({\cal P}(\aleph_\omega)),\ (H(\aleph_\omega),j\upharpoonright H(\aleph_\omega))\in j(X)\}.
 \end{equation*}
 It is a $L({\cal P}(\aleph_\omega))$-ultrafilter. By Definition \ref{genericI0}(5) $U$ is definable in $L({\cal P}(\aleph_\omega))[G]$, as a class, in the sense that given an $X\subseteq{\cal P}(\aleph_\omega)$, $L({\cal P}(\aleph_\omega))[G]$ ``knows'' whether $X\in U$ or not, but $U\notin L({\cal P}(\aleph_\omega))[G]$. The proof works as Lemma 5 in \cite{Woodin}, in that we prove that the embedding $j$ is close enough to a (generic) ultrapower embedding, that is easier to work with. The main innovation in \cite{Woodin} is the proof of \L{}os' Theorem: in $L(V_{\lambda+1})$ there is no Axiom of Choice, so it must be proven from scratch. Here the situation is the same, but the proof of \L{}os' Theorem is slightly different, as here the same ``reflection trick'' is not possible, and we must create one ad hoc.

The obstacle to prove \L{}os' Theorem without \AC{} is in proving that  if 
\begin{equation*}
 \{z\in{\cal P}(\aleph_\omega):L({\cal P}(\aleph_\omega))\vDash\exists y\ \varphi(f_1(x),\dots,f_n(x))\}\in U, 
\end{equation*}
then there exists a function $g:{\cal P}(\aleph_\omega)\to L({\cal P}(\aleph_\omega))$ such that 
\begin{equation*}
 \{x\in V_{\lambda+1}:L({\cal P}(\aleph_\omega))\vDash \varphi(g(x),f_1(x),\dots,f_n(x))\}\in U. 
\end{equation*}
This is equivalent to prove that for each $f:{\cal P}(\aleph_\omega)\to L({\cal P}(\aleph_\omega))\setminus\emptyset$ there exists a $g:{\cal P}(\aleph_\omega)\to L({\cal P}(\aleph_\omega))\setminus\emptyset$ such that $\{x\in{\cal P}(\aleph_\omega):g(x)\in f(x)\}\in U$. Since in $L({\cal P}(\aleph_\omega))$ everything is definable from an ordinal and elements in ${\cal P}(\aleph_\omega)$, it suffices to prove it for $f$ such that for any $a\in{\cal P}(\aleph_\omega)$, $f(a)\subseteq{\cal P}(\aleph_\omega)$. For more details about this procedure, see Lemma 5.11 in \cite{Dimonte0}.

\begin{claim}
 \label{allinimage}
 For any $b\in{\cal P}(\aleph_\omega)$, there exists a $g:{\cal P}(\aleph_\omega)\to L({\cal P}(\aleph_\omega))\setminus\emptyset$ such that $j(g)((H(\aleph_\omega),j\upharpoonright H(\aleph_\omega)))=b$.
\end{claim}

Note that $(H(\aleph_\omega),j\upharpoonright H(\aleph_\omega))$ can be coded as an element of ${\cal P}(\aleph_\omega)^{V[G]}$, therefore the claim is well-formulated.

\begin{proof}[Proof of Claim]
 Let $N\subseteq H(\aleph_\omega)$. Then we define ${\cal P}(\aleph_\omega)_N$ in the same way we define ${\cal P}(\aleph_\omega)$ from $H(\aleph_\omega)$: 
\begin{equation*}
 {\cal P}(\aleph_\omega)_N=\{\bigcup_{n\in\omega}a_n:(\forall n\ a_n\subseteq\aleph_n\wedge a_n\in N)\wedge\forall n\leq m\ a_n=a_m\cap\aleph_n\}. 
\end{equation*}

As in Lemma \ref{amenability}, if we have a $k:N\prec H(\aleph_\omega)$, then it naturally extends to $k_N:{\cal P}(\aleph_\omega)_N\to {\cal P}(\aleph_\omega)$, with $k_N(\bigcup_{n\in\omega}a_n)=\bigcup_{n\in\omega}k(a_n)$. If we do not known anything else about $k$, it could be the case that $k_N$ is not elementary.

 Let 
\begin{equation*}
 g(x)=\begin{cases}
	c & \text{if }x=(N,k),\ N\subseteq H(\aleph_\omega),\ \exists Z\subseteq {\cal P}(\aleph_\omega)_N\ k_N\upharpoonright Z:Z\prec {\cal P}(\aleph_\omega),\ k(c)=b\\
	0 & \text{otherwise}.
	\end{cases}
\end{equation*}

Then
\begin{equation*}
 j(g)(x)=\begin{cases}
	c & \text{if }x=(N,k),\ N\subseteq H^{V[G]}(\aleph_\omega),\ \exists Z\subseteq ({\cal P}(\aleph_\omega)_N)^{V[G]}\\
	  & k_N\upharpoonright Z:Z\prec {\cal P}^{V[G]}(\aleph_\omega),\ k(c)=j(b)\\
	0 & \text{otherwise}.
	\end{cases}
\end{equation*}

If $\mathbb{P}$ is $\omega$-closed, then $({\cal P}(\aleph_\omega)_{H(\aleph_\omega)})^{V[G]}={\cal P}(\aleph_\omega)$, and therefore it is immediate to see that $(H(\aleph_\omega),j\upharpoonright H(\aleph_\omega))$ satisfies the conditions in the definition of $j(g)$, letting $Z=({\cal P}(\aleph_\omega)_N)^{V[G]}$, and therefore $j(g)((H(\aleph_\omega),j\upharpoonright H(\aleph_\omega)))=b$, because $j$ is injective. If $\mathbb{P}$ is not $\omega$-closed, then possibly in ${\cal P}(\aleph_\omega)_{H(\aleph_\omega)}$ calculated in $V[G]$ more elements can appear than the ones in ${\cal P}(\aleph_\omega)$, as there can be, in $V[G]$, $\omega$-sequences of elements in $H(\aleph_\omega)$ that are not in $V$. But anyway ${\cal P}(\aleph_\omega)\in L({\cal P}(\aleph_\omega))[G]$, so we can just let $Z={\cal P}(\aleph_\omega)\subseteq ({\cal P}(\aleph_\omega)_{H(\aleph_\omega)})^{V[G]}$, and then again $j(g)((H(\aleph_\omega),j\upharpoonright H(\aleph_\omega)))=b$.
\end{proof}

In particular, $b$ can be an element of $j(f)((H(\aleph_\omega),j\upharpoonright H(\aleph_\omega)))$, therefore \L{}os' Theorem is proved.

The ultrapower construction therefore does yield an elementary embedding, we call it $k$, and the following diagram commutes:
\newline
\begin{center}
\begin{tikzpicture}
 \node (a) at (0,3) {$L({\cal P}(\aleph_\omega))$};
 \node (b) at (3,3) {$L({\cal P}(\aleph_\omega))[G]$};
 \node (c) at (3,0) {$\Ult(L({\cal P}(\aleph_\omega)),U)$};
 \draw [->] (a) -- (b) node[anchor=south, midway] {$j$};
 \draw [->] (a) -- (c) node[anchor=west, midway] {$k$};
 \draw [->] (c) -- (b) node[anchor=west, midway] {$h$};
\end{tikzpicture}
\end{center}

where $h([f])=j(f)((H(\aleph_\omega),j\upharpoonright H(\aleph_\omega)))$. By Claim \ref{allinimage}, $h"\Ult(L({\cal P}(\aleph_\omega)),U)\supseteq{\cal P}(\aleph_\omega)$, and therefore the critical point of $h$ is bigger than $\Theta$, because the critical point of $h$ is not in the image of $h$. Therefore $j(\Theta)=k(\Theta)$ and $j\upharpoonright L_\Theta({\cal P}(\aleph_\omega))=k\upharpoonright L_\Theta({\cal P}(\aleph_\omega))$. Also, by elementarity, $\Ult(L({\cal P}(\aleph_\omega)),U)\vDash V=L(k({\cal P}(\aleph_\omega)))$. As $k({\cal P}(\aleph_\omega))=j({\cal P}(\aleph_\omega))={\cal P}^{V[G]}(\aleph_\omega)$, this means that $\Ult(L({\cal P}(\aleph_\omega)),U)=L({\cal P}^{V[G]}(\aleph_\omega))=L({\cal P}(\aleph_\omega))[G]$.

Now it is the almost the same as \cite{Woodin}. It is standard to prove that every strong limit cardinal with large cofinality is a fixed point of $k$, so the class of fixed points $I$ is cofinal in $\Ord$, and therefore every element of $L({\cal P}(\aleph_\omega))$ is defined from elements of $I\cup {\cal P}(\aleph_\omega)$: if not, the inverse of the collapse of the Skolem closure of $I\cup {\cal P}(\aleph_\omega)$ would be a non-trivial elementary embedding, but the critical point would be so large that it would be in $I$, and that is a contradiction. This argument would hold also in $L({\cal P}^{V[G]}(\aleph_\omega))$, so any element in $L({\cal P}^{V[G]}(\aleph_\omega))$ is defined from elements of $I\cup {\cal P}^{V[G]}(\aleph_\omega)$.

So any ordinal $\beta<\Theta$ is definable from some elements of $I\cup {\cal P}(\aleph_\omega)$. Pick $\beta$ so that $\mathbb{P},\ {\cal P}(\aleph_\omega),\ G\in L_\beta({\cal P}(\aleph_\omega))^{V[G]}$. Let $i_0,\dots,i_n\in I$ such that $\beta$ and $\mathbb{P}$ are definable from ${\cal P}(\aleph_\omega)\cup\{i_0,\dots,i_n\}$. Let $Z$ be the Skolem closure of $\{i_0,\dots,i_n\}\cup{\cal P}(\aleph_\omega)$. In particular $\beta,\ \mathbb{P}\in Z$. Let $\alpha=\sup Z$, so that $\beta<\alpha$.

By the lifting lemma and the fact that $\mathbb{P}\in Z$ we have that $Z[G]\prec L({\cal P}(\aleph_\omega))[G]=L({\cal P}^{V[G]}(\aleph_\omega))$, and also $Z[G]$ contains ${\cal P}(\aleph_\omega)[G]={\cal P}^{V[G]}(\aleph_\omega)$ and $\{i_0,\dots,i_n\}$. But, by elementarity, $k(Z)$ is the Skolem closure of $\{i_0,\dots,i_n\}\cup{\cal P}^{V[G]}(\aleph_\omega)$ in $L({\cal P}^{V[G]}(\aleph_\omega))$, therefore $k(Z)\subseteq Z[G]$. But then $k(\alpha)=\sup(k(Z))\leq \sup Z[G]\leq\sup Z=\alpha$, so $k(\alpha)=\alpha$.

Therefore the fixed points of $k$ are cofinal in $\Theta$. As 
\begin{equation*}
 j\upharpoonright L_\Theta({\cal P}(\aleph_\omega))=k\upharpoonright L_\Theta({\cal P}(\aleph_\omega)), 
\end{equation*}
also the fixed points of $j$ are cofinal in $\Theta$.
\end{proof}
  
  \begin{teo}
	  \label{teo2}
    $\Theta$ is ``inaccessible'' in $L({\cal P}(\aleph_\omega))$, in the sense that it is regular and if $\alpha<\Theta$ then there is a surjection from ${\cal P}(\aleph_\omega)$ to ${\cal P}(\alpha)$.
  \end{teo}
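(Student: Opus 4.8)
The plan is to treat the two assertions separately, since regularity of $\Theta$ is the routine fact already invoked in the Preliminaries (Exercise 28.19 in \cite{Kanamori}). The only point I would recall is that in $L({\cal P}(\aleph_\omega))$ every set is the image of some $\Phi:\Ord\times{\cal P}(\aleph_\omega)\twoheadrightarrow L({\cal P}(\aleph_\omega))$, so given a putative cofinal $f:\gamma\to\Theta$ with $\gamma<\Theta$ one can pick, uniformly and without choice, the $\Phi$-least surjection ${\cal P}(\aleph_\omega)\twoheadrightarrow f(\xi)$ for each $\xi<\gamma$ and paste these along a surjection ${\cal P}(\aleph_\omega)\twoheadrightarrow\gamma$, contradicting the definition of $\Theta$. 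I would therefore take regularity as given and concentrate on the surjection clause.

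For the surjection clause I would first reduce it to a statement about levels of the hierarchy. Fix $\alpha<\Theta$ and a surjection $\pi:{\cal P}(\aleph_\omega)\twoheadrightarrow\alpha$. I claim that a surjection ${\cal P}(\aleph_\omega)\twoheadrightarrow{\cal P}(\alpha)$ exists in $L({\cal P}(\aleph_\omega))$ iff ${\cal P}(\alpha)\subseteq L_\gamma({\cal P}(\aleph_\omega))$ for some $\gamma<\Theta$. One direction is Lemma \ref{folklore}: a surjection $s:{\cal P}(\aleph_\omega)\twoheadrightarrow{\cal P}(\alpha)$ can be coded, using $\pi$ and a pairing on $\aleph_\omega$, as a subset of ${\cal P}(\aleph_\omega)$, hence lies in some $L_\gamma({\cal P}(\aleph_\omega))$ with $\gamma<\Theta$, and then $\ran(s)={\cal P}(\alpha)$ sits below $\Theta$. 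For the converse I would use Lemma \ref{ubdgood}: choose a good ordinal $\delta\in(\gamma,\Theta)$; since every element of $L_\delta({\cal P}(\aleph_\omega))$ is definable there from a parameter in ${\cal P}(\aleph_\omega)$, the map sending $(n,a)$ to the unique $x$ satisfying the $n$-th formula with parameter $a$ in $L_\delta({\cal P}(\aleph_\omega))$ (and to $\emptyset$ otherwise) is a surjection ${\cal P}(\aleph_\omega)\twoheadrightarrow L_\delta({\cal P}(\aleph_\omega))$, which restricts to the desired surjection onto ${\cal P}(\alpha)$.

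It remains to prove ${\cal P}(\alpha)\subseteq L_\gamma({\cal P}(\aleph_\omega))$ for some $\gamma<\Theta$, and here I would argue by minimal counterexample. Suppose $\alpha^*$ is least such that no $\gamma<\Theta$ bounds ${\cal P}(\alpha^*)$. For every $\beta<\alpha^*$ the ordinal $o(\beta)=$ least $\gamma$ with ${\cal P}(\beta)\subseteq L_\gamma({\cal P}(\aleph_\omega))$ is defined and below $\Theta$; the map $o:\alpha^*\to\Theta$ lies in $L({\cal P}(\aleph_\omega))$, so by regularity of $\Theta$ its range is bounded, say by $\gamma^\dagger<\Theta$. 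Fix a cofinal $\langle\alpha_\xi:\xi<\mu\rangle$ in $\alpha^*$ with $\mu=\cof(\alpha^*)$. Any $A\subseteq\alpha^*$ is recovered as $A=\bigcup_{\xi<\mu}(A\cap\alpha_\xi)$ from its trace sequence $\langle A\cap\alpha_\xi:\xi<\mu\rangle$, each entry of which is a subset of some $\alpha_\xi<\alpha^*$ and hence lies in $L_{\gamma^\dagger}({\cal P}(\aleph_\omega))$. Thus the trace sequence is an element of $L_{\gamma^\dagger+\mu+\omega}({\cal P}(\aleph_\omega))$, and since $\Theta$ is a regular cardinal $\gamma^\dagger+\mu+\omega<\Theta$; therefore $A$ itself lies below $\Theta$, uniformly in $A$, contradicting the choice of $\alpha^*$.

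The step I expect to be the crux is the conversion of mere membership ${\cal P}(\alpha)\in L_\Theta({\cal P}(\aleph_\omega))$ into an actual surjection: the boundedness ${\cal P}(\alpha)\subseteq L_\gamma({\cal P}(\aleph_\omega))$ is soft and follows from regularity alone, but producing a single surjection ${\cal P}(\aleph_\omega)\twoheadrightarrow{\cal P}(\alpha)$ relies essentially on the coding supplied by good ordinals (Lemma \ref{ubdgood}), i.e. on the fact that the well-ordered approximations $L_\delta({\cal P}(\aleph_\omega))$ below $\Theta$ are themselves surjective images of ${\cal P}(\aleph_\omega)$. If one preferred to keep the embedding in play, the minimal-counterexample argument could instead be run at a fixed point $\bar\alpha>\alpha$ of $j$ furnished by Proposition \ref{cofinalfixed} and then restricted via $B\mapsto B\cap\alpha$, but the regularity argument above already makes $j$ unnecessary for this particular clause.
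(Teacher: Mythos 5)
Your framing of the problem is right: regularity is routine, the reduction of the surjection clause to the boundedness statement ``${\cal P}(\alpha)\subseteq L_\gamma({\cal P}(\aleph_\omega))$ for some $\gamma<\Theta$'' is correct (and is exactly how the paper's proof concludes, via a surjection onto a level $L_\gamma({\cal P}(\aleph_\omega))$). But the minimal-counterexample argument for the boundedness statement has a fatal gap at the limit stage, precisely at the sentence ``Thus the trace sequence is an element of $L_{\gamma^\dagger+\mu+\omega}({\cal P}(\aleph_\omega))$.'' The trace sequence $\langle A\cap\alpha_\xi:\xi<\mu\rangle$ is a particular subset of $\mu\times L_{\gamma^\dagger}({\cal P}(\aleph_\omega))$, and being a subset of a set that appears at some level of the constructibility hierarchy does not place it at any bounded later level: the hierarchy only adds \emph{definable} subsets at each step. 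Indeed, if ``subset of an element of $L_\delta$'' implied ``element of $L_{\delta+\mu+\omega}$'', then since $\alpha\subseteq L_{\alpha+\omega}({\cal P}(\aleph_\omega))$ one would get ${\cal P}(\alpha)\subseteq L_{\alpha+\alpha+\omega}({\cal P}(\aleph_\omega))$ outright, with no induction and no hypothesis beyond $\ZF$ -- which would make $\Theta$ ``inaccessible'' in $L(\mathbb{R})$ under $\ZFC$ alone, where it is false (there $\Theta=(2^{\aleph_0})^+$ can have ${\cal P}(\omega_1)$ of size $2^{\aleph_0}$ not surjected onto by $\mathbb{R}$ when CH fails badly, and in any case the statement is not a theorem of $\ZF$). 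Knowing that each individual entry $A\cap\alpha_\xi$ lies in $L_{\gamma^\dagger}({\cal P}(\aleph_\omega))$ gives no bound on where the sequence -- equivalently, on where $A$ itself -- first becomes definable, which is exactly the original problem restated.

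This is the step for which the paper needs the Coding Lemma, and hence the generic embedding; your closing remark that ``the regularity argument already makes $j$ unnecessary for this particular clause'' is the opposite of the truth. The paper's proof fixes $\pi:{\cal P}(\aleph_\omega)\twoheadrightarrow\alpha$ and a $\gamma$ witnessing the Coding Lemma for $\pi$; given $A\subseteq\alpha$, the set $X=\pi^{-1}[A]$ is a subset of ${\cal P}(\aleph_\omega)$ lying somewhere below $\Theta$ by Lemma \ref{folklore}, but with no uniform bound -- the Coding Lemma supplies $Y\subseteq X$ with $Y\in L_\gamma({\cal P}(\aleph_\omega))$ and $\pi''Y=\pi''X=A$, so that $A$ is definable from $Y$ and $\pi$ and hence lies in $L_{\gamma+k}({\cal P}(\aleph_\omega))$ for a $\gamma$ independent of $A$. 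The Coding Lemma in turn rests on the Weak Coding Lemma \ref{WCL}, whose proof uses $j$, its critical point, and the amenability from Definition \ref{genericI0}(5) in an essential way. If you want to keep your overall architecture, replace the limit step by an appeal to the Coding Lemma; there is no known way around it.
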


  \begin{lem}[Weak Coding Lemma]
  \label{WCL}
   Suppose generic I0 for $\aleph_\omega$, witnessed by $j$, $\mathbb{P}$. Then in $L({\cal P}(\aleph_\omega))$, for all $\kappa<\Theta$ and for all $\pi:{\cal P}(\aleph_\omega)\twoheadrightarrow\kappa$ $\exists\gamma<\Theta$ such that $\forall X\subseteq{\cal P}(\aleph_\omega)$ if $\pi''X$ is cofinal in $\kappa$, then $\exists Y\subseteq X$, $Y\in L_{\gamma}({\cal P}(\aleph_\omega))$ such that $\pi''Y$ is cofinal in $\kappa$.
  \end{lem}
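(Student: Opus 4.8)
The statement is the exact analogue, in the $L({\cal P}(\aleph_\omega))$ setting, of Moschovakis' Coding Lemma for $L(\mathbb{R})$ under $\AD$, and the plan is to prove it by the same strategy, with the elementary embedding $j$ (equivalently, the ultrapower of Proposition \ref{cofinalfixed}) playing the role that the Recursion Theorem and a universal set play in the classical argument. So first I would set up that machinery: recall from the proof of Proposition \ref{cofinalfixed} the $L({\cal P}(\aleph_\omega))$-ultrafilter $U$ on ${\cal P}(\aleph_\omega)$ with generator $s=(H(\aleph_\omega),j\upharpoonright H(\aleph_\omega))$, so that for $Z\subseteq{\cal P}(\aleph_\omega)$ one has $Z\in U$ iff $s\in j(Z)$, the factor embedding $k$ with $k\upharpoonright L_\Theta({\cal P}(\aleph_\omega))=j\upharpoonright L_\Theta({\cal P}(\aleph_\omega))$ and $\Ult(L({\cal P}(\aleph_\omega)),U)=L({\cal P}(\aleph_\omega))[G]$, and \L{}os' Theorem in the form of Claim \ref{allinimage}: every $b\in{\cal P}(\aleph_\omega)$ is $j(g)(s)$ for some $g:{\cal P}(\aleph_\omega)\to L({\cal P}(\aleph_\omega))$. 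The point of this setup is that ``membership in $U$'' gives a definable, choice-free way to talk about $U$-large selections of witnesses, which will substitute for the pointwise choices that are unavailable in $L({\cal P}(\aleph_\omega))$.

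Second, I would reduce the problem to coding by reals. By Lemma \ref{ubdgood} good ordinals are cofinal in $\Theta$, so I may fix a good $\gamma_0<\Theta$ above the level of $\pi$ and $\kappa$; over $L_{\gamma_0}({\cal P}(\aleph_\omega))$ every relevant object (in particular every candidate subset of ${\cal P}(\aleph_\omega)$, by Lemma \ref{folklore}) is definable from a parameter in ${\cal P}(\aleph_\omega)$, so a ``code'' for an $L_{\gamma}({\cal P}(\aleph_\omega))$-subset of a given $X$ can be taken to be an element of ${\cal P}(\aleph_\omega)$ together with a bounding level. The heart of the argument is then the coding construction: arguing by contradiction, suppose that for every $\gamma<\Theta$ there is an $X$ with $\pi''X$ cofinal but no $Y\subseteq X$ in $L_\gamma({\cal P}(\aleph_\omega))$ with $\pi''Y$ cofinal, and let $\xi^*<\kappa$ be least such that, at a fixed bounding level, the ``captured'' part of the image cannot be pushed up to $\xi^*$ uniformly. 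Following Moschovakis, one produces a single coding set that does capture cofinally up to $\xi^*$ by referring, through the self-map supplied by $j$, to the codes already available for the degrees below $\xi^*$; applying $j$ and then evaluating at the generator $s$ (using Claim \ref{allinimage} to realize the required witness as $j(g)(s)$) yields an actual $Y\subseteq X$ of bounded level whose image reaches $\xi^*$, contradicting minimality. The uniform bound $\gamma$ produced this way is controlled because the only non-ground-model ingredient of the construction is $j\upharpoonright L_\alpha({\cal P}(\aleph_\omega))$ for an $\alpha$ independent of $X$, which by Lemma \ref{amenability} and Lemma \ref{level} lives at a level of $L({\cal P}(\aleph_\omega))[G]$ bounded below $\Theta$; regularity of $\Theta$ together with the cofinality of the fixed points of $j$ (Proposition \ref{cofinalfixed}) then guarantees that the recursion closes off strictly below $\Theta$.

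The main obstacle is exactly the absence of choice. The naive proof would, for each of the cofinally many $\xi<\kappa$ with $\xi\in\pi''X$, pick a witness $a_\xi\in X\cap\pi^{-1}(\xi)$ and set $Y=\{a_\xi:\xi<\kappa\}$; but such a simultaneous choice is unavailable in $L({\cal P}(\aleph_\omega))$, and moreover any selection made ``by hand'' would have complexity tracking that of $X$, so it could not be confined to a level $\gamma$ independent of $X$. The embedding circumvents both difficulties at once: it furnishes a canonical, definable selection on a $U$-large set of coordinates, and because this selection is read off from $j$ and $s$ rather than from $X$, its complexity is governed by $j\upharpoonright L_\alpha({\cal P}(\aleph_\omega))$ and hence by a level that does not depend on $X$. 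Verifying that this level is genuinely uniform across all $X$ with cofinal image, i.e. that the recursion can be carried out with a single bounding ordinal, is the delicate point on which the whole lemma turns, and is where Proposition \ref{cofinalfixed} and the amenability lemmas do the essential work.
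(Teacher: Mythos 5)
You have the right general flavor---replace the Recursion Theorem of Moschovakis' argument by the elementary embedding, and use amenability of $j\upharpoonright L_\alpha({\cal P}(\aleph_\omega))$ to keep the complexity of the witness under control---but the step that actually does the work is left as a black box, and the decomposition you choose does not match what can be carried out. Your plan hinges on a minimization over ordinals $\xi^*<\kappa$ (``least $\xi^*$ such that the captured part of the image cannot be pushed up to $\xi^*$'') and on ``producing a single coding set \dots by referring, through the self-map supplied by $j$, to the codes already available for the degrees below $\xi^*$.'' That self-referential construction is exactly what the Recursion Theorem provides under \AD{} and what is \emph{not} available here; saying that $j$ plays that role does not explain how the coding set is built, and evaluating $j(g)$ at the generator $(H(\aleph_\omega),j\upharpoonright H(\aleph_\omega))$ (Claim \ref{allinimage}) produces elements of ${\cal P}^{V[G]}(\aleph_\omega)$, not a uniformly bounded-level subset $Y$ of a given $X$. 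The paper's proof uses a different and more concrete reflection scheme: one takes $\kappa_0$ \emph{least} such that the lemma fails (so that $j(\kappa_0)=\kappa_0$ by minimality), fixes a tail $\Gamma'=\langle\gamma_\alpha:\alpha<\Theta\rangle$ of levels with $L_{\gamma_\alpha}({\cal P}(\aleph_\omega))\prec L_\Theta({\cal P}(\aleph_\omega))$ that is (by Lemma \ref{lifting}) the same computed in $L({\cal P}(\aleph_\omega))$ and in $L({\cal P}(\aleph_\omega))[G]$, so that $j(\gamma_\alpha)=\gamma_{j(\alpha)}$, and then takes a counterexample $X\in L_{\gamma_{\crt(j)+1}}({\cal P}(\aleph_\omega))$ at level $\gamma_{\crt(j)}$. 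The contradiction is that $j''X$ is a $j(\pi)$-unbounded subset of $j(X)$ lying in $L_{\gamma_{\crt(j)+1}+1}({\cal P}(\aleph_\omega))^{V[G]}\subseteq L_{\gamma_{j(\crt(j))}}({\cal P}(\aleph_\omega))^{V[G]}$, i.e.\ strictly below the level where, by elementarity, no such subset of $j(X)$ may exist; the strict inequality $\crt(j)+1<j(\crt(j))$ is what creates the room. Nothing in your proposal plays the role of this index shift, and your minimization over $\xi^*<\kappa$ never interacts with $\crt(j)$.

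A second, independent gap: since $j$ maps $L({\cal P}(\aleph_\omega))$ into $L({\cal P}(\aleph_\omega))[G]$ rather than into itself, every witness obtained on the $j$-side lives in the generic extension, and one must be able to convert a $\pi$-unbounded $Y\subseteq X$ with $Y\in L_\gamma({\cal P}(\aleph_\omega))[G]$ into one in $L_\gamma({\cal P}(\aleph_\omega))$. The paper does this with Claim \ref{newY}, replacing $Y$ by the set $Y^*$ of all $a$ forced into a name for $Y$ by some condition below a fixed $p$, which still sits inside $X$ and still has cofinal image. Your proposal never addresses this transfer, and without it the reflection (however it is set up) only yields a conclusion about $L({\cal P}(\aleph_\omega))[G]$, not about $L({\cal P}(\aleph_\omega))$ where the lemma is stated.
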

	
	\begin{proof}
	 We introduce some notation, to make the proof more readable.
	
	 If $X\subseteq{\cal P}(\aleph_\omega)$, $X\in L({\cal P}(\aleph_\omega))$, we say that $X$ is $\pi$-unbounded (in $\kappa$) if $\pi''X$ is cofinal in $\kappa$. So the Weak Coding Lemma says that for any $\pi$ surjection of ${\cal P}(\aleph_\omega)$ to some ordinal there exists a $\gamma$ such that any $\pi$-unbounded set has a subset that is in $L_{\gamma}({\cal P}(\aleph_\omega))$ (therefore less complex) that it is still $\pi$-unbounded. 
	
	Suppose not, i.e., there exist $\kappa$, $\pi:{\cal P}(\aleph_\omega)\twoheadrightarrow\kappa$, such that $\forall\gamma<\Theta\ \exists X\subseteq{\cal P}(\aleph_\omega)$ $\pi$-unbounded such that $\forall Y\subseteq X$, $Y\in L_\gamma({\cal P}(\aleph_\omega))$ $Y$ is $\pi$-bounded. We just write NWCL$^{\kappa}$ to indicate this and underline the role of $\kappa$.
	
	Now we want to fix $\kappa$.
	
	\begin{claim}
\label{newY}
 Let $\pi:{\cal P}(\aleph_\omega)\twoheadrightarrow\kappa$, let $X\subseteq{\cal P}(\aleph_\omega)$ $\pi$-unbounded. Then if there exists a $Y\in L_\gamma({\cal P}(\aleph_\omega))[G]$, with $\gamma$ limit and $\mathbb{P}\in L_\gamma({\cal P}(\aleph_\omega))[G]$, such that $Y$ is $\pi$-unbounded and $Y\subseteq X$, then there exists $Y^*\in L_\gamma({\cal P}(\aleph_\omega))$ that satisfies the same.
\end{claim}
\begin{proof}
 Let $p\in G$ and $\tau\in L_\gamma({\cal P}(\aleph_\omega))$ such that $p\vDash \tau=Y\wedge Y\subseteq X$. Define 
 \begin{equation*}
  Y^*=\{a\in{\cal P}(\aleph_\omega):\exists q\leq p\ q\vDash a\in\tau\}. 
 \end{equation*}
Then $Y^*\in L_\gamma({\cal P}(\aleph_\omega))$ and $Y\subseteq Y^*\subseteq X$, therefore $\pi''Y^*\supseteq\pi''Y$ is cofinal in $\kappa$. 
\end{proof}

An immediate consequence of Claim \ref{newY} is that if NWCL$^\kappa$ does not hold in $L({\cal P}(\aleph_\omega))[G]$, then it does not hold directly in $L({\cal P}(\aleph_\omega))$: Suppose NWCL$^\kappa$ does not hold in $L({\cal P}(\aleph_\omega))[G]$ and it holds in $L({\cal P}(\aleph_\omega))$. Then there exists $\pi:{\cal P}(\aleph_\omega)\twoheadrightarrow\kappa$, such that $\forall\gamma<\Theta\ \exists X\subseteq{\cal P}(\aleph_\omega)$ $\pi$-unbounded such that $\forall Y\subseteq X$, $Y\in L_\gamma({\cal P}(\aleph_\omega))$, $Y$ is $\pi$-bounded. Fix one of these $\pi$. We can trivially extend $\pi$ to $\pi^*:{\cal P}^{V[G]}(\aleph_\omega)\twoheadrightarrow\kappa$. Then, in $L({\cal P}(\aleph_\omega))[G]$, there exists a $\gamma$ such that all the $\pi^*$-unbounded sets have a $\pi^*$-unbounded subset in $L_\gamma({\cal P}(\aleph_\omega))^{V[G]}$. We can choose $\gamma$ so that it satisfies Lemma \ref{level}. Let $X\subseteq {\cal P}(\aleph_\omega)$ in $L({\cal P}(\aleph_\omega))$ that is $\pi$-unbounded but such that all its subsets in $L_\gamma({\cal P}(\aleph_\omega))$ are $\pi$-bounded. Then, in $L_\gamma({\cal P}(\aleph_\omega))[G]$, $X$ is also $\pi^*$-unbounded, so there is a $Y\subseteq X$, $Y\in L_\gamma({\cal P}(\aleph_\omega))^{V[G]}=L_\gamma({\cal P}(\aleph_\omega))[G]$ $\pi^*$-unbounded and therefore $\pi$-unbounded. Contradiction by Claim \ref{newY}.

  Now let $\kappa_0$ be the least such that NWCL$^{\kappa_0}$ holds.
	
	\begin{claim}
 $j(\kappa_0)=\kappa_0$.
\end{claim}
\begin{proof}
 Suppose not, then $\kappa_0<j(\kappa_0)$. As $j(\kappa_0)$ is minimal in $L({\cal P}(\aleph_\omega))[G]$, this means that NWCL$^{\kappa_0}$ does not hold in $L({\cal P}(\aleph_\omega))[G]$. But then by Claim \ref{newY} it does not hold in $L({\cal P}(\aleph_\omega))$. Contradiction.
\end{proof}

Now let us drop the superscript and write NWCL instead of NWCL$^{\kappa_0}$. The free parameters in NWCL are $\omega$ and $\kappa_0$, all fixed points of $j$. By definition of $\kappa_0$, NWCL is true. As all the sets involved are subsets of ${\cal P}(\aleph_\omega)$, or codeable as such, NWCL is true in $L_\Theta({\cal P}(\aleph_\omega))$.

Let 
\begin{equation*}
 \Gamma=\{\alpha<\Theta:L_\alpha({\cal P}(\aleph_\omega))\prec L_\Theta({\cal P}(\aleph_\omega))\}. 
\end{equation*}
Then by Lemma \ref{lifting} $\Gamma$ and $\Gamma^{L({\cal P}(\aleph_\omega))[G]}$ have the same tail. Using Lemma \ref{cofinalfixed}, fix a $\delta<\Theta$, $\delta>\kappa_0$  that is a fixed point of $j$, such that $j\upharpoonright H(\aleph_\omega)\in L_\delta({\cal P}(\aleph_\omega))^{V[G]}$, and so that 
\begin{equation*}
 \Gamma\cap[\delta,\Theta]=\Gamma^{L({\cal P}(\aleph_\omega))[G]}\cap[\delta,\Theta]=\Gamma'. 
\end{equation*}
Fix an enumeration of $\Gamma'=\langle\gamma_\alpha:\alpha<\Theta\rangle$. Note that $\Gamma'=j(\Gamma')$ and $j(\gamma_\alpha)=\gamma_{j(\alpha)}$. 

As NWLC is true in $L_\Theta({\cal P}(\aleph_\omega))$, by elementarity it is also true in all $L_{\gamma}({\cal P}(\aleph_\omega))$, for $\gamma\in\Gamma'$. Therefore NWLC is true in $L_{\gamma_{\crt(j)+1}}({\cal P}(\aleph_\omega))$. Fix $\pi:{\cal P}(\aleph_\omega)\twoheadrightarrow\kappa_0$, and let $X\subseteq{\cal P}(\aleph_\omega)$ be $\pi$-unbounded and such that all $Y\subseteq X$, $Y\in L_{\gamma_{\crt(j)}}({\cal P}(\aleph_\omega))$ are $\pi$-bounded, with $\pi, X\in L_{\gamma_{\crt(j)+1}}({\cal P}(\aleph_\omega))$. Then $j(X)$ is $j(\pi)$-unbounded, and all the $Y\subseteq X$, $Y\in L_{\gamma_{j(\crt(j))}}({\cal P}(\aleph_\omega))^{V[G]}$ are $j(\pi)$-bounded. But consider $j``X$: it is a subset of $j(X)$, it is defined from $X$ and $j\upharpoonright H(\aleph_\omega)$, and therefore is in $L_{\gamma_{\crt(j)+1}+1}({\cal P}(\aleph_\omega))^{V[G]}$. As $\crt(j)+1<j(\crt(j))$, $\gamma_{\crt(j)+1}<\gamma_{j(\crt(j))}$ and therefore $\gamma_{\crt(j)+1}+1\leq\gamma_{j(\crt(j))}$, so $j``X\in L_{\gamma_{j(\crt(j))}}({\cal P}(\aleph_\omega))^{V[G]}$. But $j''\pi''X= j(\pi)''(j''X)$ and $j''\kappa_0$ is cofinal in $\kappa_0$, therefore $j``X$ is $j(\pi)$-unbounded, contradiction.
\end{proof}

\begin{lem}[Coding Lemma]
  Suppose generic I0 for $\aleph_\omega$, witnessed by $\mathbb{P}$. Then in $L({\cal P}(\aleph_\omega))$, for all $\kappa<\Theta$ and for all $\pi:{\cal P}(\aleph_\omega)\twoheadrightarrow\kappa$ $\exists\gamma_0$ such that $\forall X\subseteq{\cal P}(\aleph_\omega)$ $\exists Y\subseteq X$, $Y\in L_{\gamma_0}({\cal P}(\aleph_\omega))$ such that $\pi''X=\pi''Y$.
\end{lem}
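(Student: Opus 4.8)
The plan is to bootstrap the full Coding Lemma from the Weak Coding Lemma (Lemma \ref{WCL}), following the pattern by which one upgrades the weak to the strong Coding Lemma in $L(\mathbb{R})$ under $\AD$. First I would record the routine reduction to the ``relation form'': given $\pi:{\cal P}(\aleph_\omega)\twoheadrightarrow\kappa$ and a relation $R\subseteq\kappa\times{\cal P}(\aleph_\omega)$, one seeks $\gamma_0<\Theta$ and $Y\in L_{\gamma_0}({\cal P}(\aleph_\omega))$ with $a\in Y\Rightarrow(\pi(a),a)\in R$ and with $\pi''Y$ meeting every nonempty section of $R$; the stated lemma is the instance $R=\{(\pi(a),a):a\in X\}$, which returns $Y\subseteq X$ with $\pi''Y=\pi''X$. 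I would emphasise at the outset that for a \emph{fixed} $X$ a code always exists at \emph{some} level, since $Y=X\in L_\Theta({\cal P}(\aleph_\omega))$ by Lemma \ref{folklore}; the whole content is the \emph{uniformity} of $\gamma_0$ over all $X$. I would also flag immediately that the Weak Coding Lemma only yields a $\pi$-cofinal subset $Y$, never $\pi''Y=\pi''X$, and that the naive mirror of the proof of Lemma \ref{WCL} breaks: there $j''X\subseteq j(X)$ was $j(\pi)$-cofinal because $j''\kappa_0$ is cofinal in $\kappa_0$, but for \emph{exact} images $j''(\pi''X)$ is a gappy proper subset of $j(\pi)''j(X)=j(\pi''X)$ (as $j$ moves ordinals below $\kappa_0$), so no contradiction is produced. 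Exact coding therefore needs a genuinely new device.

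Next I would set up a least-counterexample argument as in Lemma \ref{WCL}: let $\kappa_0<\Theta$ be least such that some $\pi:{\cal P}(\aleph_\omega)\twoheadrightarrow\kappa_0$ admits no uniform bound, fix such a $\pi$, and arrange the free parameters to be fixed points of $j$, using minimality of $\kappa_0$ together with Lemma \ref{cofinalfixed} to obtain $j(\kappa_0)=\kappa_0$. I would then work inside $L_\Theta({\cal P}(\aleph_\omega))$ along the reflection sequence $\Gamma'=\langle\gamma_\alpha:\alpha<\Theta\rangle$ of ordinals with $L_{\gamma_\alpha}({\cal P}(\aleph_\omega))\prec L_\Theta({\cal P}(\aleph_\omega))$, using Lemma \ref{lifting} and Claim \ref{newY} to move between $L({\cal P}(\aleph_\omega))$ and $L({\cal P}(\aleph_\omega))[G]$. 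By minimality of $\kappa_0$, for each $\xi<\kappa_0$ the Coding Lemma already holds for the cut-down surjection onto $\xi<\kappa_0$, so there is a least level $f(\xi)<\Theta$ that uniformly codes every $X$ with $\pi''X\subseteq\xi$; this defines $f:\kappa_0\to\Theta$, and the whole difficulty is now concentrated in passing from these bounded-image codes to an exact code for arbitrary $X$.

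The heart of the proof is a transfinite construction of a single code at a level $\gamma_0$ that does \emph{not} grow with the recursion: by recursion on $\alpha\le\kappa_0$ one assembles coding sets $C_\alpha$ for the part of $R$ below $\alpha$, the successor step selecting a witness for the new ordinal and the limit step gluing. The device that keeps $\gamma_0$ fixed is self-reference: I would carry out the recursion at a level $\gamma_0$ with $L_{\gamma_0}({\cal P}(\aleph_\omega))\prec L_\Theta({\cal P}(\aleph_\omega))$, so that the partial construction is itself an object the construction may quantify over without raising complexity, while the Weak Coding Lemma supplies, at each stage, a low-complexity $\pi$-cofinal subset through which the gluing is read off. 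If the construction ever fails to stay bounded, then the ordinals it produces are cofinal in $\Theta$; reading these off through $\pi$ and Lemma \ref{folklore} and using the regularity of $\Theta$ yields a surjection ${\cal P}(\aleph_\omega)\twoheadrightarrow\Theta$, contradicting the definition of $\Theta$ and $j(\Theta)=\Theta$ (Lemma \ref{theta}).

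I expect the genuine obstacle to be precisely this: keeping the coding level from growing across the recursion, equivalently uniformizing the choice of witnesses along a $\kappa_0$-cofinal sequence. This is itself a $\kappa_0$-coding task, and it cannot be discharged by a canonical ``least'' choice, since $L({\cal P}(\aleph_\omega))\nvDash\AC$ and there is no definable well-order of ${\cal P}(\aleph_\omega)$ (only the surjection $\Phi:\Ord\times{\cal P}(\aleph_\omega)\twoheadrightarrow L({\cal P}(\aleph_\omega))$). Overcoming it requires the self-referential use of the reflection structure $L_{\gamma_0}({\cal P}(\aleph_\omega))\prec L_\Theta({\cal P}(\aleph_\omega))$ playing the role of a recursion theorem, rather than a naive induction on $\alpha$; the Weak Coding Lemma provides the base reflection and the fixed-point structure of $j$ provides the uniformity, but the upgrade from cofinal coding to exact coding is where essentially all the work lies.
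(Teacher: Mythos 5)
You have correctly identified where the difficulty lies --- upgrading cofinal coding to exact coding requires gluing witnesses for the cut-down surjections $\pi_\alpha$ along a sequence cofinal in $\kappa$, and without \AC{} one cannot simply select one witness per $\alpha$ --- but your proposed resolution does not actually supply the device that overcomes this. Saying that the reflection structure $L_{\gamma_0}({\cal P}(\aleph_\omega))\prec L_\Theta({\cal P}(\aleph_\omega))$ plays ``the role of a recursion theorem'' and that the construction is ``self-referential'' is not an argument: as you yourself observe, uniformizing the witnesses is itself a $\kappa$-coding task, so invoking an unspecified self-reference to discharge it is circular. The concrete trick, absent from your sketch, is that no selection is needed at all. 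By induction on $\kappa$ one has, for each $\alpha<\kappa$, a witness level $\gamma_\alpha$ for the cut-down surjection $\pi_\alpha$; by regularity of $\Theta$ fix $\beta_0<\Theta$ above all of them, a surjection $\rho:{\cal P}(\aleph_\omega)\twoheadrightarrow L_{\beta_0}({\cal P}(\aleph_\omega))$ and a pairing $\tau$, and form the single set
\begin{equation*}
A=\{x\in{\cal P}(\aleph_\omega):\rho(\tau_2(x))\text{ witnesses the Coding Lemma for }\pi_{\pi^*(x)}\text{ and }X\},
\end{equation*}
where $\pi^*=\pi\circ\tau_1$. This $A$ is $\pi^*$-unbounded because witnesses exist for every $\alpha<\kappa$, so the Weak Coding Lemma --- applied \emph{once}, to $A$, rather than at each stage of a recursion --- yields a $\pi^*$-unbounded $B\subseteq A$ in a fixed $L_{\gamma_\kappa}({\cal P}(\aleph_\omega))$, and $Y=\bigcup\{\rho(\tau_2(x)):x\in B\}$ is the exact code: every $\gamma\in\pi''X$ is caught by some $z\in B$ with $\pi^*(z)>\gamma$, whose decoded witness is exact below $\pi^*(z)$. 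Taking the union over the whole set $B$ is what replaces the illegitimate choice of one witness per $\alpha$, and the resulting level $\beta+1$ (where $\rho,\tau\in L_\beta({\cal P}(\aleph_\omega))$) is uniform in $X$ because $\rho$ and $\tau$ are fixed in advance.

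A secondary point: your least-counterexample framework ($\kappa_0$ minimal, $j(\kappa_0)=\kappa_0$, the sequence $\Gamma'$, Lemma \ref{lifting}, Claim \ref{newY}) imports the machinery of the proof of Lemma \ref{WCL} where it is not needed. The passage from the Weak Coding Lemma to the Coding Lemma is a direct induction on $\kappa$ using only the regularity of $\Theta$ and the Weak Coding Lemma itself; it makes no further use of generic I0, of the embedding $j$, or of the generic extension.
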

\begin{proof}
 Proving the Coding Lemma from the Weak Coding Lemma is standard (see for example the proof of Lemma 22 in \cite{Woodin}, pages 149--150), and does not need generic I0.

 Let $\pi:{\cal P}(\aleph_\omega)\twoheadrightarrow\kappa$, and suppose by induction that the Coding Lemma holds for all $\pi':{\cal P}(\aleph_\omega)\twoheadrightarrow\alpha$ with $\alpha<\kappa$. Let $\tau$ be a bijection between ${\cal P}(\aleph_\omega)$ and ${\cal P}(\aleph_\omega)\times{\cal P}(\aleph_\omega)$, and let $\pi^*:{\cal P}(\aleph_\omega)\twoheadrightarrow\kappa$ be $\pi\circ\tau_1$ (therefore if $\tau(x)=(a,b)$ then $\pi^*(x)=\pi(a)$). For any $\alpha<\kappa$, let $\pi_{\alpha}:{\cal P}(\aleph_\omega)\twoheadrightarrow\alpha$ be
\begin{equation*}
	\pi_\alpha(x)=\begin{cases}
		\pi(x) & \text{if }\pi(x)<\alpha\\
		0      & \text{otherwise}
	\end{cases}
\end{equation*}
Let $\gamma_\alpha$ witness the Coding Lemma for $\pi_\alpha$ and fix a $\gamma_\kappa$ that witnesses the Weak Coding Lemma for $\pi^*$. Let $\beta_0=\sup_{\alpha\leq\kappa}\gamma_\alpha$: As $\Theta$ is regular, $\beta_0<\Theta$, therefore there exists $\rho:{\cal P}(\aleph_\omega)\twoheadrightarrow L_{\beta_0}({\cal P}(\aleph_\omega))$. Let $\rho,\tau\in L_\beta({\cal P}(\aleph_\omega))$. Then we claim that $\beta+1$ witnesses the Coding Lemma for $\pi$.

Let $X\subseteq{\cal P}(\aleph_\omega)$. Consider 
\begin{equation*}
 A=\{x\in{\cal P}(\aleph_\omega):\rho(\tau_2(x))\text{ witnesses the Coding Lemma for }\pi_{\pi^*(x)},\ X\}.
\end{equation*}
 In other words, $A$ is the set of all $(\alpha,Y)$ such that $Y$ satisfies the Coding Lemma for $\pi_\alpha$ and $X$, but coded as a subset of ${\cal P}(\aleph_\omega)$. Since by induction for every $\alpha<\kappa$ there is a $Y\in L_{\beta_0}({\cal P}(\aleph_\omega))$ that satisfies the Coding Lemma for $\pi_\alpha$ and $X$, $\pi^*[A]=\kappa$, and by the Weak Coding Lemma there exists a $B\in L_{\gamma_\kappa}({\cal P}(\aleph_\omega))$, $B\subseteq A$ such that $B$ is $\pi^*$-unbounded. Then let $Y=\bigcup\{\rho(\tau_2(x)):x\in B\}$. Clearly $Y\in L_{\beta+1}({\cal P}(\aleph_\omega))$. 

Suppose there exists an $x\in X$ such that $\pi(x)=\gamma$. We must find a $y\in Y$ such that $\pi(y)=\gamma$. As $B$ is $\pi^*$-unbounded, there exists an $\alpha>\gamma$ and a $z\in B$ such that $\pi^*(z)=\alpha$, and therefore $\pi(x)=\pi_\alpha(x)$. As $B\subseteq A$, this means that $\rho(\tau_2(z))$ witnesses the Coding Lemma for $\pi_{\pi^*(z)}=\pi_\alpha$ and $X$. Therefore $\pi_\alpha''X=\pi_\alpha''\rho(\tau_2(z))$ and there exists a $y\in\rho(\tau_2(z))$ such that $\pi(y)=\pi_\alpha(y)=\gamma$.  
\end{proof}

\begin{proof}[Proof of Theorem \ref{teo2}]
 Let $\alpha<\Theta$ and $A\subseteq\alpha$. Fix a $\pi:{\cal P}(\aleph_\omega)\twoheadrightarrow\alpha$, $\pi\in L_\beta({\cal P}(\aleph_\omega))$ and let $\gamma\geq\beta$ witness the Coding Lemma for $\pi$. Let $X=\pi^{-1}[A]$. Then, by the Coding Lemma, there exists $Y\in L_{\gamma}({\cal P}(\aleph_\omega))$, $Y\subseteq X$ such that $\pi''Y=\pi''X=A$. But then $A\in L_\gamma({\cal P}(\aleph_\omega))$. 

Therefore ${\cal P}(\alpha)\subseteq L_\gamma({\cal P}(\aleph_\omega))$, and as there is a surjection from ${\cal P}(\aleph_\omega)$ to $L_\gamma({\cal P}(\aleph_\omega))$, the theorem is proved.
\end{proof}

\begin{teo}
\label{Thetalimit}
 If $\mathbb{P}$ is $\omega$-closed, then $\Theta$ is limit of $\omega$-strongly measurable cardinals that are measurable.
\end{teo}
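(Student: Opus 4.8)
The plan is to produce cofinally many regular cardinals $\kappa<\Theta$ of $L({\cal P}(\aleph_\omega))$ at which the argument of Proposition \ref{measure} can be rerun verbatim, so that each such $\kappa$ is $\omega$-strongly measurable, and then to upgrade $\omega$-strong measurability to measurability via Remark \ref{stronglyornot}. The cardinals $\aleph_\omega$ and $\Theta$ are special in this paper only because they are fixed by $j$; the real engine of Proposition \ref{measure} is that $j(\kappa)=\kappa$ together with the existence of an $\omega$-club of fixed points below $\kappa$. So the first reduction is: it suffices to produce cofinally many $\kappa<\Theta$ that are regular in $L({\cal P}(\aleph_\omega))$, remain regular in $V[G]$, and satisfy $j(\kappa)=\kappa$.

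Next I would isolate the generalization of Proposition \ref{measure}. Fix such a $\kappa$ and set $\gamma=\crt(j)$; since $\gamma<\aleph_\omega<\kappa$ and \GCH{} holds below $\aleph_\omega$, we have $2^{\gamma}<\kappa$, so $\gamma$ is an admissible witness for $\omega$-strong measurability. Suppose toward a contradiction that $\langle S_\alpha:\alpha<\gamma\rangle$ partitions $E^\kappa_\omega$ into $\gamma$ stationary sets. Applying $j$ and using $j(\kappa)=\kappa$ gives a partition $\langle T_\alpha:\alpha<j(\gamma)\rangle$ of $(E^\kappa_\omega)^{V[G]}$. The crucial object is $D=\{\alpha<\kappa:j(\alpha)=\alpha\}$, which lies in $L({\cal P}(\aleph_\omega))[G]$ by Lemma \ref{amenability}; it is closed under suprema of $\omega$-sequences because $j$ is continuous at cofinality $\omega$ (as $\crt(j)>\omega$, $j$ fixes $\omega$ and commutes with $\omega$-indexed sups), and it is unbounded in $\kappa$ because for $\beta<\kappa$ the ordinal $\sup_n j^n(\beta)$ stays below $\kappa$ (here $\kappa$ is regular and $j(\kappa)=\kappa$) and is a fixed point. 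Hence $D$ is an $\omega$-club of the $V[G]$-regular cardinal $\kappa$, so it meets the stationary set $T_{\gamma}$ in some $\eta$; then $\eta\in S_\alpha$ for some $\alpha<\gamma$ forces $\eta=j(\eta)\in T_{j(\alpha)}=T_{\alpha}$, contradicting the disjointness of the partition.

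It remains to produce cofinally many such $\kappa$. Proposition \ref{cofinalfixed} gives cofinally many fixed points of $j$ below $\Theta$, and Theorem \ref{teo2} gives that $\Theta$ is regular; the delicate point is that the naive way to manufacture a fixed point -- closing an ordinal off under $j$ -- produces an ordinal of cofinality $\omega$, which is exactly not regular. So instead I would work with the honest ultrapower: under the $\omega$-closure hypothesis, Claim \ref{allinimage} and the computation in Proposition \ref{cofinalfixed} identify $\Ult(L({\cal P}(\aleph_\omega)),U)$ with $L({\cal P}(\aleph_\omega))[G]$ and give $j\upharpoonright L_\Theta({\cal P}(\aleph_\omega))=k\upharpoonright L_\Theta({\cal P}(\aleph_\omega))$, so the fixed points of $j$ below $\Theta$ are exactly the closure points of the ultrapower map $k$. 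A regular cardinal $\kappa<\Theta$ of $L({\cal P}(\aleph_\omega))$ whose ultrapower image is itself (equivalently, every $f:{\cal P}(\aleph_\omega)\to\kappa$ has $[f]_U<\kappa$) is then a fixed point of $k$, and such $\kappa$ are cofinal because $\Theta$ is regular and $k(\Theta)=\Theta$. That these $\kappa$ also remain regular in $V[G]$ is handled as in Proposition \ref{measure}, treating ${\cal P}(\aleph_\omega)$ as closed under $\aleph_\omega$-sequences.

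Finally, measurability. For each $\kappa$ produced above, $\omega$-strong measurability yields, by the splitting induction of Remark \ref{stronglyornot} (which uses only $\DC_\gamma$, available from Remark \ref{DC} as $\gamma=\crt(j)<\aleph_\omega$), a stationary set on which the club filter is an ultrafilter; this ultrafilter is $\kappa$-complete and nonprincipal provided the club filter on $\kappa$ is $\kappa$-complete in $L({\cal P}(\aleph_\omega))$. For $\kappa=\aleph_{\omega+1}$ this is immediate, since $\DC_{\aleph_\omega}$ gives $\DC_{<\aleph_{\omega+1}}$. The main obstacle is exactly this completeness for $\kappa>\aleph_{\omega+1}$, where $\DC_{<\kappa}$ exceeds the $\DC_{\aleph_\omega}$ guaranteed by Remark \ref{DC}: one cannot simply choose, for each of $\mu<\kappa$ given nonstationary sets, a witnessing club. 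I would attack this with the Coding Lemma, which replaces such a choice by a single set $Y\in L_{\gamma_0}({\cal P}(\aleph_\omega))$ of codes meeting every index, and with the $\omega$-closure of $\mathbb{P}$, which makes $k$ an exact ultrapower and lets one transport the completeness statement through the embedding; verifying that this really manufactures a genuine witnessing club (rather than an intersection of unboundedly many clubs) is the crux of the whole theorem.
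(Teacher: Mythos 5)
Your outline of the $\omega$-strong measurability step is faithful to the paper (Proposition \ref{measure} really does relativize to any $V[G]$-regular fixed point $\kappa$ via the $\omega$-club $D$ of fixed points), but the proposal has two genuine gaps, and the second is exactly where the theorem's content lies. First, the supply of suitable $\kappa$'s: you assert that regular cardinals $\kappa<\Theta$ of $L({\cal P}(\aleph_\omega))$ that are closure points of the ultrapower map are cofinal ``because $\Theta$ is regular and $k(\Theta)=\Theta$''. In a choiceless $L({\cal P}(\aleph_\omega))$ there is no a priori reason that regular cardinals are cofinal below $\Theta$ at all, let alone regular fixed points; as you yourself note, closing off under $j$ yields cofinality $\omega$. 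The paper does not look for regular cardinals that happen to be fixed: it takes $\kappa$ to be the \emph{least} ordinal above a fixed point $\alpha$ with $L_\kappa({\cal P}(\aleph_\omega))\prec_1 L_\Theta({\cal P}(\aleph_\omega))$, shows $j(\kappa)=\kappa$ via Lemma \ref{lifting}, and then \emph{proves} $\cof(\kappa)>\aleph_\omega$ and regularity using $\Sigma_1$ Skolem functions, the identification of $\kappa$ with the supremum of the $\mathbf{\Delta}^2_1(\alpha)$ prewellorderings, the boundedness Lemma \ref{boundedness}, and a re-proof of the Weak Coding Lemma inside $L_\kappa({\cal P}(\aleph_\omega))$. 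None of this machinery appears in your proposal, and without it the very first reduction is unsupported.

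Second, you correctly identify the $\kappa$-completeness of the club filter as the crux and then leave it as a hope (``I would attack this with the Coding Lemma\dots verifying that this really manufactures a genuine witnessing club\dots is the crux''). The paper's resolution is a specific construction you would need: an auxiliary filter $F_0'$ in $V$, generated by the sets $C_{(N,k)}$ of common fixed points of the induced maps $\bar{k}:L_\kappa({\cal P}(\aleph_\omega)_N)\to L_\kappa({\cal P}(\aleph_\omega))$ for pairs $(N,k)$ in a class ${\cal E}$ of ``virtual'' embeddings coded in $H(\aleph_\omega)$. The $\omega$-closure of $\mathbb{P}$ is used at precisely one point, to guarantee $({\cal P}(\aleph_\omega)_{H(\aleph_\omega)})^{V[G]}={\cal P}(\aleph_\omega)$ and hence $(H(\aleph_\omega),j\upharpoonright H(\aleph_\omega))\in j({\cal E})$, so that ${\cal E}\neq\emptyset$ and $F_0'$-positivity can be refuted by the usual reflection argument. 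One then shows $F_0'\subseteq F$ on $E^\kappa_\omega$ (each $C_\sigma$ is an $\omega$-club, via boundedness of the maps $G_\beta(N,k)=\bar{k}(\beta)$), that $F_0'$ is an ultrafilter on a stationary $T$, and that it is $\kappa$-complete by running the Coding Lemma in $L_\kappa({\cal P}(\aleph_\omega))$ on a set of codes for the generators $C_b$ contained in the given $A_\xi$'s, again invoking boundedness of $\rho$ to find a single common fixed point $\delta_\omega$. Your proposal contains neither the filter $F_0'$ nor the sets ${\cal E}$ and $C_{(N,k)}$, so the measurability claim is not established.
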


\begin{proof}
Fix an $\alpha$ such that $j(\alpha)=\alpha$ and such that Lemma \ref{level} holds, and let $\kappa$ be the least ordinal bigger than $\alpha$ such that $L_\kappa({\cal P}(\aleph_\omega))\prec_1 L_\Theta({\cal P}(\aleph_\omega))$. Then, using Lemma\ref{lifting}, $j(\kappa)=\kappa$. We will prove that $\kappa$ is an $\omega$-strongly measurable cardinal and that is measurable, and by Lemma \ref{cofinalfixed} the arbitrarity of $\alpha$ will prove the theorem,

Define $\Theta^{L_\kappa({\cal P}(\aleph_\omega))}$ as the supremum of the ordinals $\alpha$ such that there is a surjection $\rho:{\cal P}(\aleph_\omega)\twoheadrightarrow\alpha$ such that $\{(a,b):\rho(a)<\rho(b)\}\in L_\kappa({\cal P}(\aleph_\omega))$. By elementarity $\Theta^{L_\kappa({\cal P}(\aleph_\omega))}=\kappa$. It is now tempting to dump all the information we have about $\Theta$ on $\kappa$, for example that $\Theta$ is regular or the Coding Lemma itself. But one must be cautious: $\Theta$ is not an element of $L_\Theta({\cal P}(\aleph_\omega))$, therefore not all properties can be reflected (not regularity, for example), and the Coding Lemma has too high complexity. But it is possible to prove both, thanks to the analysis of stable ordinals that has been brought forward for $L(\mathbb{R})$.

Like in $L(\mathbb{R})$, there exists a $\Sigma_1(\alpha)$ partial map from ${\cal P}(\aleph_\omega)$ to $L_\kappa({\cal P}(\aleph_\omega))$: As $L_\kappa({\cal P}(\aleph_\omega))\vDash V=\HOD_{{\cal P}(\aleph_\omega)}$, it is possible to build $\Sigma_1$ partial Skolem functions inside $L_\kappa({\cal P}(\aleph_\omega))$, defining for any $\Sigma_1$ formula and any $a\in{\cal P}(\aleph_\omega)$ $h_{\phi,a}(x)$ as the smallest element in $\OD_a$ that satisfies $\varphi(x)$. The closure of ${\cal P}(\aleph_\omega)\cup\alpha$ under such Skolem functions is a $\Sigma_1$ elementary substructure of $L_\Theta({\cal P}(\aleph_\omega))$, and by condensation its collapse is some $L_\gamma({\cal P}(\aleph_\omega))$, with $\gamma>\alpha$. But $\kappa$ was the least one, so $\gamma=\kappa$. Therefore every element of $L_\kappa({\cal P}(\aleph_\omega))$ is image of the collapse of some Skolem function, and coding $\varphi,\ a$ and $x$ in a unique element of ${\cal P}(\aleph_\omega)$ we have the $\Sigma_1(\alpha)$ partial surjection. 

In the same way, more carefully, it is possible for any $\beta<\kappa$ to find in $L_\kappa({\cal P}(\aleph_\omega))$ a total surjection from ${\cal P}(\aleph_\omega)$ to $L_\beta({\cal P}(\aleph_\omega))$. Therefore, $\kappa$ is actually the supremum of the $\mathbf{\Delta}^2_1(\alpha)$ prewellorderings of ${\cal P}(\aleph_\omega)$ (the proof is the same as in Lemma 1.11 and Lemma 1.12 in \cite{Steel}). 

\begin {lem}
 \label{boundedness}
 Let $\rho:{\cal P}(\aleph_\omega)\to L_\kappa({\cal P}(\aleph_\omega))$ be a $\Sigma_1(\alpha)$ partial map. Then for every $Z\subseteq\dom(\rho)$ such that $Z\in L_\kappa({\cal P}(\aleph_\omega))$, $\rho\upharpoonright Z$ is bounded.
\end {lem}
\begin{proof}
Let $Z\subseteq\dom(\rho)$, $Z\in L_\kappa({\cal P}(\aleph_\omega))$. Then $Z$ is actually $\mathbf{\Delta}^2_1(\alpha)$ from ${\cal P}(\aleph_\omega)$. We can build a $\mathbf{\Delta}^2_1(\alpha)$ prewellordering on $Z$, connecting to any $z\in Z$ the least ordinal in the constructive hierarchy where $\rho(z)$ appears. Then $\rho\upharpoonright Z$ cannot be unbounded, otherwise there would be a $\mathbf{\Delta}^2_1(\alpha)$ prewellordering of length $\kappa$. 
\end{proof}

We proved the following:

\begin{lem}
 \label{boundednessofpi}
 There exists a partial $\rho:{\cal P}(\aleph_\omega)\twoheadrightarrow L_\kappa({\cal P}(\aleph_\omega))$, $\Sigma_1(\alpha)$-definable from ${\cal P}(\aleph_\omega)$, such that on every $Z\subseteq\dom(\rho)$, $Z\in L_\kappa({\cal P}(\aleph_\omega))$ we have that $\rho$ is bounded.
\end{lem}

In a similar fashion, we can prove that $\cof(\kappa)>\aleph_\omega$: Suppose that there exists a cofinal sequence of length $<\aleph_\omega$ in $\kappa$. Then using $\DC_{\aleph_\omega}$ for every element of the sequence one can choose a $\mathbf{\Delta}^2_1(\alpha)$ prewellordering with such length, and glue them all together to form a $\mathbf{\Delta}^2_1(\alpha)$ prewellordering of length $\kappa$, that is contradictory. With the same reasoning the cofinality of $\kappa$ is larger then $\aleph_\omega$ also in $L({\cal P}(\aleph_\omega))^{V[G]}$.

Now the proof that $\kappa$ is $\omega$-strongly measurable is as in Proposition \ref{measure}. Consider $E^\kappa_\omega$ and let $\langle S_\alpha:\alpha<\crt(j)\rangle$ a partition of $E^\kappa_\omega$ in stationary subsets. Let 
\begin{equation*}
	\langle T_\alpha:\alpha<j(\crt(j))\rangle=j(\langle S_\alpha:\alpha<\crt(j)\rangle). 
\end{equation*}
Let $D=\{\alpha<\kappa:\cof(\alpha)=\omega,\ j(\alpha)=\alpha\}$. Then by Definition \ref{genericI0}(5) $D\in L({\cal P}(\aleph_\omega))[G]$ and $D$ is naturally closed on $E^\kappa_\omega$. As $(\cof(\kappa)>\omega)^{L({\cal P}(\aleph_\omega)[G]}$, $D$ is unbounded. Then $D$ is in the club filter of $\kappa$, and the proof is as before.

We should prove now that $\kappa$ is measurable. Remark \ref{stronglyornot} does not help, as we do not have enough Dependent Choice to prove that the club filter $F$ on $\kappa$ is $\kappa$-complete. In the $L(V_{\lambda+1})$ case (see Lemma 22 in \cite{Woodin}) this was proved defining $F_0$ as the filter of the fixed points of the elementary embeddings from $L_\kappa(V_{\lambda+1})$ to itself: then $F_0\subseteq F$ on $E^\kappa_\omega$, with the usual proof $F_0$ is an ultrafilter on a stationary set, and it is easier to prove that $F_0$ is $\kappa$-complete.

Unfortunately, this does not work in this setting: the problem is that in the I0 case $F_0$ is both in the domain and the codomain of the $j$ that witnesses I0, and it is a fixed point, but in the generic I0 case, instead, the filter 
\begin{equation*}
 F_0=\{k:L_\kappa({\cal P}(\aleph_\omega))\prec L_\kappa({\cal P}(\aleph_\omega))[G]\}
\end{equation*}
 exists only in $V[G]$, and not in $V$. The objective, therefore, is to find a filter $F_0'\subseteq F$ in $V$ such that $F_0\subseteq j(F_0')\subseteq F$, and proving that there exists a stationary set such that $F_0'$ is an ultrafilter on it. Then the Coding Lemma will be used instead of $\DC_\kappa$ to prove that $F_0'$ is $\kappa$-complete, and this will prove that $F$ is $\kappa$-complete. The steps of the proof are therefore the following:
\begin{itemize}
	\item Proving the Coding Lemma in $L_\kappa({\cal P}(\aleph_\omega))$;
	\item Proving that there is no partition of $\kappa$ in $\crt(j)$ $F_0'$-positive sets;
	\item Proving that $F_0'\subseteq F$ on $E^\kappa_\omega$;
	\item Proving that $F_0'$ is $\kappa$-complete.
\end{itemize}

The Weak Coding Lemma in $L_\kappa({\cal P}(\aleph_\omega))$ does not follow from elementarity, but it can be re-proved in the exact same way as in Lemma \ref{WCL}: It is essential that $\Theta^{L_\kappa({\cal P}(\aleph_\omega))}=\kappa$, so that $\gamma_\alpha$ is defined, and since $\cof(\kappa)>\aleph_\omega$ the sequence of $\gamma_\alpha$ is longer than $\aleph_\omega$, and therefore the contradiction with the existence of $\gamma_{\crt(j)}$ holds.

\begin{lem}
 $\kappa$ is regular in $L({\cal P}(\aleph_\omega))$.
\end{lem}
\begin{proof}
 It is similar to the proof of $\cof(\kappa)>\aleph_\omega$, using the Weak Coding Lemma instead of $\DC_{\aleph_\omega}$. Let $\beta=\cof(\kappa)$ and pick a $f:\beta\to\kappa$ cofinal in $\kappa$. Suppose that $\beta<\kappa$. As $\Theta^{L_\kappa({\cal P}(\aleph_\omega))}=\kappa$, there exists a surjection $\pi:{\cal P}(\aleph_\omega)\twoheadrightarrow\beta$, with $\pi\in L_\kappa({\cal P}(\aleph_\omega))$. Define $\pi':{\cal P}(\aleph_\omega)\twoheadrightarrow\beta$ as $\pi'(\langle a,b\rangle)=\pi(a)$ and let $\gamma$ witness the Weak Coding Lemma in $L_\kappa({\cal P}(\aleph_\omega))$ for $\pi'$. Then let 
\begin{equation*}
 A=\{\langle a,b\rangle:b\in\dom(\rho)\wedge\rho(b)=f(\pi(a))\}.
\end{equation*}
 By the Weak Coding Lemma, there exists $B\in L_\gamma({\cal P}(\aleph_\omega))$, $B\subseteq A$ such that $B$ is $\pi'$-unbounded. By definition, this means that $\pi''(B)_0$ (the image under $\pi$ of the projection of $B$ in the first coordinate), is unbounded in $\beta$. Let 
\begin{equation*}
 Z=(B)_1=\{b:\exists a\in{\cal P}(\aleph_\omega),\ \langle a,b\rangle\in B\}. 
\end{equation*}
Then also $Z$ is in $L_\gamma({\cal P}(\aleph_\omega))$. But $f\circ\pi``(B)_0\subseteq \rho''Z$, and as $\pi''(B)_0$ is cofinal in $\beta$, $\rho``Z$ is unbounded in $\kappa$, and this contradict its boundedness as in Lemma \ref{boundednessofpi}.
\end{proof}

Again, the proof of the Coding Lemma works now in $L_\kappa({\cal P}(\aleph_\omega))$, the key points being that the Weak Coding Lemma holds in $L_\kappa({\cal P}(\aleph_\omega))$ and that $\kappa=\Theta^{L_\kappa({\cal P}(\aleph_\omega))}$ is regular (so that induction works). We have proved:

\begin{lem}[Coding Lemma in $L_\kappa({\cal P}(\aleph_\omega))$]
 \label{codingink}
 In $L_\kappa({\cal P}(\aleph_\omega))$, for all $\eta<\kappa$ and for all $\pi:{\cal P}(\aleph_\omega)\twoheadrightarrow\eta$ $\exists\gamma_0$ such that $\forall X\subseteq{\cal P}(\aleph_\omega)$, $\exists Y\subseteq X$, $Y\in L_{\gamma_0}({\cal P}(\aleph_\omega))$ such that $\pi''X=\pi''Y$.
\end{lem}

We define $F_0'$. Let $N\subseteq H(\aleph_\omega)$. Then we define ${\cal P}(\aleph_\omega)_N$ as in Proposition \ref{cofinalfixed}: 
\begin{equation*}
 {\cal P}(\aleph_\omega)_N=\{\bigcup_{n\in\omega}a_n:(\forall n\ a_n\subseteq\aleph_n\wedge a_n\in N)\wedge\forall n\leq m\ a_n\subseteq a_m\}. 
\end{equation*}
If $\kappa$ is the least such that $L_\kappa({\cal P}(\aleph_\omega)_N)\prec_1 L_{\Theta^{L({\cal P}(\aleph_\omega)_N)}}({\cal P}(\aleph_\omega)_N)$, then the situation is as before: $\kappa$ is actually the supremum of the $\mathbf{\Delta}^2_1(\alpha)$ prewellorderings of ${\cal P}(\aleph_\omega)_N$, and therefore for each $(N,k)\in {\cal E}$ we can also fix a $\rho_N$ as in Lemma \ref{boundednessofpi}, i.e., a partial surjection from ${\cal P}(\aleph_\omega)_N$ to $L_\kappa({\cal P}(\aleph_\omega)_N)$ that is $\Sigma_1(\alpha)$ definable from ${\cal P}(\aleph_\omega)_N$ and bounded on all sets in $L_\kappa({\cal P}(\aleph_\omega)_N)$. If $k:N\prec H(\aleph_\omega)$, as in Proposition \ref{cofinalfixed} we can extend it to $\hat{k}:{\cal P}(\aleph_\omega)_N\to {\cal P}(\aleph_\omega)$, and via $\rho_N$ we can even try to extend it to a $\bar{k}:L_\kappa({\cal P}(\aleph_\omega)_N)\to L_\kappa({\cal P}(\aleph_\omega))$: since for any $x\in L_\kappa({\cal P}(\aleph_\omega)_N)$ there is a $a\in{\cal P}(\aleph_\omega)_N$ such that $\rho_N(a)=x$, we can define $\bar{k}(x)=\rho(\hat{k}(x))$. If $k$ actually extends to an elementary embedding on $L_\kappa({\cal P}(\aleph_\omega)_N)$, as $\rho_n$ and $\rho$ are defined with the same formula using $\alpha$ as a parameter, $\bar{k}$ is the only possible extension such that $\bar{k}(\alpha)=\alpha$. Contrary to the case in Proposition \ref{cofinalfixed}, however, if $k$ it is not extendible it is not clear even if $\bar{k}$ is a function, as it can be that $\rho_N(a)=\rho_N(b)$ but $\rho(\hat{k}(a)\neq\rho(\hat{k}(b))$. 

Let ${\cal E}$ be the set of pairs $(N,k)\in L({\cal P}(\aleph_\omega))$ such that
\begin{itemize}
 \item $N\subseteq H(\aleph_\omega)$, $N=(H(\aleph_\omega))^{L(N)}$;
 \item $\kappa$ is the least such that $L_\kappa({\cal P}(\aleph_\omega)_N)\prec_1 L_{\Theta^{L({\cal P}(\aleph_\omega)_N)}}({\cal P}(\aleph_\omega)_N)$.
 \item $k:N\prec H(\aleph_\omega)$;
 \item $k$ induces $\bar{k}:L_\kappa({\cal P}(\aleph_\omega)_N)\to L_\kappa({\cal P}(\aleph_\omega))$;
 \item there exists $e:{\cal P}(\aleph_\omega)_N\twoheadrightarrow\alpha$, $e\in L_\kappa({\cal P}(\aleph_\omega)_N)$, such that $\bar{k}(\alpha)=\alpha$.
\end{itemize}

This is the point where we need the $\omega$-closure of $\mathbb{P}$: since $\mathbb{P}$ is $\omega$-closed, then 
\begin{multline*}
({\cal P}(\aleph_\omega)_{H(\aleph_\omega)})^{V[G]}=(\{\bigcup_{n\in\omega}a_n:(\forall n\ a_n\subseteq\aleph_n\wedge a_n\in H(\aleph_\omega))\wedge\forall n\leq m\ a_n\subseteq a_m\})^{V[G]}=\\
 =\{\bigcup_{n\in\omega}a_n:(\forall n\ a_n\subseteq\aleph_n\wedge a_n\in H(\aleph_\omega))\wedge\forall n\leq m\ a_n\subseteq a_m\}={\cal P}(\aleph_\omega). 
\end{multline*}
Therefore $(H(\aleph_\omega),j\upharpoonright H(\aleph_\omega))\in j({\cal E})$, thanks to Definition \ref{genericI0}(5), Lemma \ref{level} and by definition of $\kappa$, so ${\cal E}\neq\emptyset$.

If $\mathbb{P}$ were not $\omega$-closed, then it is conceivable that $({\cal P}(\aleph_\omega)_{H(\aleph_\omega)})^{V[G]}\neq {\cal P}(\aleph_\omega)$ and so $(H(\aleph_\omega),j\upharpoonright H(\aleph_\omega))\in j({\cal E})$ and ${\cal E}=\emptyset$.

For each $(N,k)\in{\cal E}$, let 
\begin{equation*}
 C_{(N,k)}=\{\eta<\kappa:\cof(\eta)=\omega,\ \bar{k}(\eta)=\eta\},
\end{equation*}
and for each $\sigma\subseteq{\cal E}$, $|\sigma|\leq\aleph_\omega$, let $C_\sigma=\bigcap_{(N,k)\in\sigma}C_{(N,k)}$. Let $F_0'$ the filter generated by the $C_\sigma$'s. 

\begin{claim}
 There exists $T\subseteq E^\kappa_\omega$ such that $F_0'\upharpoonright T$ is an ultrafilter
\end{claim}
\begin{proof}[Proof of Claim]
 As in Proposition \ref{measure}: Suppose that there exists a partition $\langle S_\xi:\xi<\crt(j)\rangle$ of $F_0$-positive sets. Then 
 \begin{equation*}
  j(\langle S_\xi:\xi<\crt(j)\rangle)=\langle T_\xi:\xi<j(crt(j))\rangle 
 \end{equation*}
is a partition of $E^\kappa_\omega$ in $j(F_0')$-positive sets. By definition, $C_{(H(\aleph_\omega),j\upharpoonright H(\aleph_\omega))}=\{\eta\in E^\kappa_\omega:j(\eta)=\eta)\}\in j(F_0')$, therefore $T_{\crt(j)}\cap C_{(H(\aleph_\omega),j\upharpoonright H(\aleph_\omega))}\neq\emptyset$. Let $\eta\in T_{\crt(j)}\cap C_{(H(\aleph_\omega),j\upharpoonright H(\aleph_\omega))}$. There must be some $\xi<\crt(j)$ such that $\eta\in S_\xi$. But then $\eta=j(\eta)\in T_\xi\cap T_{\crt(j)}$, contradiction. By the proof of the Remark \ref{stronglyornot}, using $\DC_{\aleph_\omega}$ and the $\aleph_\omega$-completeness of $F_0'$, the Claim is proved. 
\end{proof}

\begin{claim}
 \label{ultra}
 Let $F$ be the club filter on $E^\kappa_\omega$. Then $F_0'\upharpoonright T=F\upharpoonright T$.
\end{claim}
\begin{proof}
 It suffices to show that $F_0'\subseteq F$: in this case, $T$ is also stationary, so $F\upharpoonright T$ is a filter that extends $F_0'\upharpoonright T$, and since this is an ultrafilter they must be equal.
 
 Let $\sigma\subseteq{\cal E}$, $|\sigma|\leq\aleph_\omega$ then, we must prove that $C_{\sigma}$ is an $\omega$-club. For any $(N,k)\in{\cal E}$, by definition of ${\cal P}(\aleph_\omega)_N$ the function that associates $N$ to $L_\kappa({\cal P}(\aleph_\omega)_N)$ is $\Sigma_1(\alpha)$-definable from ${\cal P}(\aleph_\omega)$. Therefore the same is true for the function that associates $(N,k)$ to $\bar{k}$. Now let, for any $\beta<\kappa$, $G_\beta(N,k)=\bar{k}(\beta)$. Clearly $G_\beta$ is $\Sigma_1(\alpha)$-definable from ${\cal P}(\aleph_\omega)$, and therefore by Lemma \ref{boundedness} $G_\beta$ is bounded on $\sigma$, and therefore $\sup\{\bar{k}(\beta):(N,k)\in\sigma\}<\kappa$.
 
 \begin{subclaim}
  For every $(N,k)\in{\cal E}$, $L_\kappa({\cal P}(\aleph_\omega)_N)$ is closed under $\omega$-sequences.
 \end{subclaim}
 \begin{proof}
  It is a relativization to $L({\cal P}((\aleph_\omega)_N))$ of the fact that $\cof(\kappa)>\aleph_\omega$. If $\langle a_n:n\in\omega\rangle$ is such that $a_n\in L_\kappa({\cal P}(\aleph_\omega)_N)$ for any $n\in\omega$, then each $a_n$ is $\mathbf{\Delta}^2_1(\alpha)$ from ${\cal P}(\aleph_\omega)_N$. Each $a_n$ can be coded as a subset $A_n$ of ${\cal P}(\aleph_\omega)_N$, so that the $\omega$-sequence of the codes $A$ (that, in turn, can be coded as one subset $A$ of ${\cal P}(\aleph_\omega)_N$) codes $\langle a_n:n\in\omega\rangle$. Then $A$ is also $\mathbf{\Delta}^2_1(\alpha)$, and therefore is in $L_\kappa({\cal P}(\aleph_\omega)_N)$. By developing the code, one can find $\langle a_n:n\in\omega\rangle$ in $L_\kappa({\cal P}(\aleph_\omega)_N)$.
 \end{proof}

 Let $(N,k)\in\sigma$ and let $\beta_0<\kappa$. Define $\beta_{n+1}=\sup_{(N,k)\in\sigma}G_{\beta_n}(N,k)$ and $\beta_\omega=\sup_{n\in\omega}\beta_n$. Then $\bar{k}(\beta_\omega)=\sup_{n\in\omega}\bar{k}(\beta_n)=\beta_\omega$. Therefore $\beta_\omega\in C_\sigma$ and $C_\sigma$ is unbounded in $\kappa$.
 
 As $L_\kappa({\cal P}(\aleph_\omega)_N)$ is closed under $\omega$-sequences, it is trivial that $C_\sigma$ is $\omega$-closed.
\end{proof}

 Finally, we need to prove that $F_0'\upharpoonright T$ is $\kappa$-complete. Note that then also $F\upharpoonright T$ will be $\kappa$-complete, so the measurability of $\kappa$ will come from the club filter, in the spirit of $\omega$-strong measurability.
 
 Let $\langle A_\xi:\xi<\beta\rangle$ with $\beta<\kappa$ such that $A_\xi\subseteq T$ and $A_\xi\in F_0'$. Note that each element of ${\cal E}$ can be coded as an element of ${\cal P}(\aleph_\omega)$, and the same is true for $\aleph_\omega$-sequences of elements of ${\cal E}$. Fix therefore a surjection $G:{\cal P}(\aleph_\omega)\twoheadrightarrow{\cal E}^{\aleph_\omega}$, and for any $a\in{\cal P}(\aleph_\omega)$ write $C_{G(a)}$ simply as $C_a$.
 
 Fix a surjection $\pi:{\cal P}(\aleph_\omega)\twoheadrightarrow\beta$ (remember that $\kappa=\Theta^{L_\kappa({\cal P}(\aleph_\omega))}$), a bijection $\tau:{\cal P}(\aleph_\omega)\to{\cal P}(\aleph_\omega)\times {\cal P}(\aleph_\omega)$ and let $\pi'=\pi\circ\tau_1$. Let $\eta<\kappa$ witness the Coding Lemma in $L_\kappa({\cal P}(\aleph_\omega))$ for $\pi'$, as in Lemma \ref{codingink}. Let 
\begin{equation*}
 A=\{a\in{\cal P}(\aleph_\omega): C_{\tau_2(a)}\subseteq A_{\pi'(a))}\}. 
\end{equation*}
Since for all $\xi<\beta$ $A_\xi\in F_0'$, each $A_\xi$ contains some $C_b$, therefore $\pi'[A]=\beta$. Then there exists a $B\subseteq A$ that is in $L_\eta({\cal P}(\aleph_\omega))$ and such that $\pi`[B]=\beta$. 
 
 Let ${\cal E}_B=\bigcup\{G(\tau_2(a)):a\in B\}$, and for any $\delta<\kappa$ let 
\begin{equation*}
 W_\delta=\{\bar{k}(a):(N,k)\in{\cal E}_B,\ \rho_N(a)=\delta\}. 
\end{equation*}
Then
\begin{equation*}
 \begin{split}
  \rho''W_\delta & =\{\rho(\bar{k}(a)):(N,k)\in{\cal E}_B,\ \rho_N(a)=\delta\}\\
	               & =\{\bar{k}(\rho_N(a)):(N,k)\in {\cal E}_B,\ \rho_N(a)=\delta\}\\
								 & =\{\bar{k}(\delta):(N,k)\in{\cal E}_B\}. 
\end{split}
\end{equation*}
 As $W_\delta$ is in $L_\kappa({\cal P}(\aleph_\omega))$, by the boundedness of $\rho$ (\ref{boundednessofpi}) $\rho''W_\delta$ is bounded for any $\delta$. Now iterate the process as in \ref{ultra} to find a $\delta_\omega$ that is a fixed point for all the $k$'s such that $(N,k)\in{\cal E}_B$, and therefore $\delta_\omega\in C_{(N,k)}$. In other words, for any $a\in B$, $\delta_\omega\in C_{\tau_2(a)}$. 
 
 But for every $a\in B$, $C_{\tau_2(a)}\subseteq A_{\pi'(a)}$, and therefore $\delta_\omega\in A_{\pi'(a)}$. As $\pi'[B]=\beta$, $\delta_\omega$ is in all the $A_\xi$'s, and therefore the intersection of all the $A_\xi$'s is not empty, and therefore $F_0'\upharpoonright T$ is $\kappa$-complete.
\end{proof}

The added condition of $\omega$-closure for Theorem \ref{Thetalimit} seems to break the harmony of Theorem \ref{thm:main}. So we can ask:

\begin{open}
 Is it possible to prove Theorem \ref{Thetalimit} just from generic I0 at $\aleph_\omega$?
\end{open}

Part of the problem seems to be this: if $\mathbb{P}$ is $\omega$-closed, then ${\cal P}(\aleph_\omega)$ is a set that is easily describable fro $H(\aleph_\omega)$, in a certain sense $L({\cal P}(\aleph_\omega))^{V[G]}$ can compute it correctly. But if $\mathbb{P}$ is not $\omega$-closed, then we know that ${\cal P}(\aleph_\omega)$ is in $L({\cal P}(\aleph_\omega))^{V[G]}$ because of the hypotheses, but it is not possible to compute it there, it is just ``a'' set that extends $H(\aleph_\omega)$, so there is no way to characterize it. A similar problem arises in I0: if there exists $j:L(V_{\lambda+1})\prec L(V_{\lambda+1})$, then if $\mathbb{P}$ is a forcing notion that adds an $\omega$-sequence in $\lambda$ and $G$ is $\mathbb{P}$-generic, then $V_{\lambda+1}\notin L(V_{\lambda+1})^{V[G]}$. Therefore there is the real possibility that this Open Problem is connected to Open Problem \ref{perfect}.

\section{Open problems}

The most obvious question is of course:

\begin{open}
 Is generic I0 at $\aleph_\omega$ consistent under large cardinals?
\end{open}

But this is not the only direction that the research can follow. Consider in general $L({\cal P}(\lambda))$. One of the key points in the paper is that $L({\cal P}(\aleph_\omega))\nvDash\AC$. One can ask for which $\lambda$'s is true that $L({\cal P}(\lambda))\nvDash\AC$. Such question is non trivial only when $\lambda$ is singular, as it is easy to achieve when $\lambda$ is regular via forcing. But:

\begin{teo}[Shelah, \cite{Shelah2}]
 If $\lambda$ is singular with uncountable cofinality, then $L({\cal P}(\lambda))\vDash\AC$.
\end{teo}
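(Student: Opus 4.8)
The plan is to show that ${\cal P}(\lambda)$ is well-orderable in $L({\cal P}(\lambda))$; as noted in the Preliminaries, a well-order of the generating set propagates to the whole constructible closure, so this yields $L({\cal P}(\lambda))\vDash\AC$. Fix $\kappa=\cof(\lambda)>\omega$ and a continuous increasing sequence $\langle\lambda_i:i<\kappa\rangle$ cofinal in $\lambda$ with each $\lambda_i<\lambda$, and put $\mu_i=|{\cal P}(\lambda_i)|$ computed in $V$. Since the map $A\mapsto\langle A\cap\lambda_i:i<\kappa\rangle$ is injective, everything reduces to two tasks: (i) produce inside $L({\cal P}(\lambda))$ a coherent sequence $\langle W_i:i<\kappa\rangle$ of well-orderings of the ${\cal P}(\lambda_i)$, and (ii) well-order the thread space $\prod_{i<\kappa}\mu_i$ inside $L({\cal P}(\lambda))$. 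Composing $A\mapsto\langle\mathrm{rk}_{W_i}(A\cap\lambda_i):i<\kappa\rangle$ with the well-order from (ii) then well-orders ${\cal P}(\lambda)$.

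For task (i) I would first treat the case $\lambda$ strong limit, so $2^{\lambda_i}<\lambda$. Then a single well-ordering of ${\cal P}(\lambda_i)$ is a $\le\!\lambda$-indexed family of subsets of $\lambda$, hence codeable as one element of ${\cal P}(\lambda)$; restricting a fixed $V$-well-ordering of ${\cal P}(\lambda)$ gives a coherent sequence $\langle W_i:i<\kappa\rangle$ which, being a $\kappa\le\lambda$-indexed family of such codes, is itself codeable as a single subset of $\lambda$ and therefore lies in ${\cal P}(\lambda)\subseteq L({\cal P}(\lambda))$. Crucially, this step needs no choice inside $L({\cal P}(\lambda))$ and is insensitive to $\cof(\lambda)$: the entire weight of the theorem falls on (ii). In the general case the $\mu_i$ may exceed $\lambda$ and a single $W_i$ need not be codeable by a subset of $\lambda$; there one must well-order each ${\cal P}(\lambda_i)$ by the very method of (ii), so the two tasks merge and no genuinely new idea is needed.

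For task (ii) uncountable cofinality is indispensable, and this is where I expect the main difficulty. In $V$, PCF theory provides a scale in $\prod_{i<\kappa}\mu_i$ modulo the bounded ideal; when $\kappa$ is regular uncountable one may take it \emph{good}, and the club filter on $\kappa$ is then a nontrivial $\kappa$-complete filter. The plan is to encode each $f\in\prod_i\mu_i$ by the pair consisting of the least scale index $\alpha<\lambda^+$ with $f<^*g_\alpha$ together with the bounded ``exceptional'' data $f\upharpoonright j$ on which $f$ is not yet dominated: goodness together with $\kappa$-completeness guarantees that this exceptional part can be taken bounded in $\kappa$, hence lives in a set of size $<\lambda$, which is harmlessly well-orderable. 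This injects $\prod_i\mu_i$ into $\lambda^+$ times a small well-orderable factor and so well-orders it.

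The main obstacle is to carry out the PCF machinery of (ii) \emph{inside} $L({\cal P}(\lambda))$: the scale has length $\lambda^+$ and a priori need not belong to $L({\cal P}(\lambda))$, so one must produce a canonical, ordinal-definable-from-${\cal P}(\lambda)$ scale and verify the boundedness of the exceptional sets using only the $\kappa$-complete club filter on $\kappa$, which $L({\cal P}(\lambda))$ does possess since $\kappa$ is regular uncountable. This is precisely the point that collapses when $\cof(\lambda)=\omega$: for $\lambda=\aleph_\omega$ the club filter on $\omega$ is merely the Fr\'echet filter, the exceptional sets are only finite, no $\kappa$-completeness is available to absorb the error term, and indeed, by the body of the present paper, $\prod_n\mu_n$ — equivalently ${\cal P}(\aleph_\omega)$ — can consistently fail to be well-orderable in $L({\cal P}(\aleph_\omega))$.
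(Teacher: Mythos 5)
The paper offers no proof of this statement -- it is quoted verbatim from Shelah's \emph{PCF without choice} -- so there is nothing internal to compare against; judging your proposal on its own merits, it has a fatal gap at precisely the step you identify as carrying ``the entire weight of the theorem'', namely task (ii). The encoding $f\mapsto(\alpha, f\upharpoonright j)$, where $\alpha$ is the least scale index with $f<^*g_\alpha$ and $j$ bounds the exceptional set, is not injective: knowing that $f(i)<g_\alpha(i)$ for all $i\ge j$ constrains the values $f(i)$ on $[j,\kappa)$ only to lie below $g_\alpha(i)$, and whenever $g_\alpha(i)\ge 2$ on a cofinal set there are as many functions pointwise below $g_\alpha$ agreeing with $f$ up to $j$ as there are elements of the entire product. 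A cardinality count makes the failure unmissable: $|\prod_{i<\kappa}\mu_i|=2^\lambda$, while your target $\lambda^+\times\bigcup_{j<\kappa}\prod_{i<j}\mu_i$ has size $\lambda^+$ in the strong limit case, so the claimed injection would prove $2^\lambda\le\lambda^+$, i.e., the Singular Cardinal Hypothesis at every strong limit $\lambda$ of uncountable cofinality, outright in \ZFC{} -- and that is known to be independent. There is a secondary problem as well: scales of length $\lambda^+$ live in products of regular cardinals such as $\prod_i\lambda_i^+$ modulo a suitable ideal, not in $\prod_i 2^{\lambda_i}$, whose cofinality modulo the bounded ideal can be far larger than $\lambda^+$, so the object your argument starts from need not exist.

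Your reduction and task (i) are fine, and you are right both that the content is concentrated in well-ordering the thread space $\prod_{i<\kappa}{\cal P}(\lambda_i)$ inside $L({\cal P}(\lambda))$ and that $\cof(\lambda)>\omega$ must be what makes it possible; but the mechanism cannot be ``scale index plus a small error term'', since no well-ordering of a set of size $2^\lambda$ can be that short. Shelah's actual argument is of a different character: rather than approximating each thread by a single member of a linearly ordered cofinal family, it analyses arbitrary sets of threads by a transfinite derivative/rank process relative to $\kappa$-complete filters on $\kappa$ (this is where uncountable cofinality enters), peeling off at each stage those functions that become determined by initial segments over what remains, and assembling the resulting well-ordered decomposition into a well-ordering of ${\cal P}(\lambda)$ definable in $L({\cal P}(\lambda))$ from parameters there. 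If you want to reconstruct the proof, that is the machinery to study.
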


Therefore it makes sense only for $\lambda$'s of cofinality $\omega$. Now, under \AD{} $L({\cal P}(\omega))\nvDash\AC$. If I0 holds at $\lambda$ then $L({\cal P}(\lambda))\nvDash\AC$. And if generic I0 holds at $\aleph_\omega$ then $L({\cal P}(\aleph_\omega))\nvDash\AC$. Are there other possibilities?

\begin{open}
 For which $\lambda$'s is true that $L({\cal P}(\lambda))\nvDash\AC$?
\end{open}

Another way to push forward the research on generic I0 would be to exploit the many results we already have on I0. For example, in \cite{Cramer} the inverse limits method is developed. In the generic I0 case is more difficult, as the original inverse limits method uses the fact that domain and codomain of the elementary embedding are the same, but it is possible to reflect the codomain inside the domain, as it is done in the proof of \ref{cofinalfixed}. 

\begin{open}
 Is there an inverse limit for generic I0?
\end{open}

Or one can investigate the degree structure of $L({\cal P}(\aleph_\omega))$, as it is done in \cite{Shi} for I0. 

\begin{open}
 How is $L({\cal P}(\aleph_\omega))$ under generic I0 at $\aleph_\omega$? Does Degree Determinacy hold?
\end{open}

The fact that in \ref{cofinalfixed} is proven that $j$ can come from an ideal opens the possibility of defining proper and non-proper elementary embeddings. Those were introduced in \cite{Woodin} and extensively studied in \cite{Dimonte} and \cite{Dimonte2} for hypotheses stronger than I0. Such hypotheses are of the form $j:L(N)\prec L(N)$, where $V_{\lambda+1}\prec N\prec V_{\lambda+2}$. Woodin in \cite{Woodin} defined a hierarchy of such $N$'s, called $E^0_\alpha$, so that the existence of an embedding $j:L(E^0_\alpha)\prec L(E^0_\alpha)$ is stronger the higher $\alpha$ is. For example, $E^0_0=L(V_{\lambda+1})\cap V_{\lambda+2}$ and $E^0_1=L(V_{\lambda+1},(V_{\lambda+1})^\sharp)\cap V_{\lambda+2}$, if it exists.

\begin{open}
 How a generic $E^0_\alpha$ could be defined? Does it exist a non-proper elementary embedding in such a setting?
\end{open}

Finally:

\begin{open}
 Is it possible to have generic I0 and $2^{\aleph_\omega}>\aleph_{\omega+1}$ in $V$?
\end{open}

(This was studied in \cite{DimonteFriedman}, \cite{DimonteWu} and \cite{ShiTrang} for I0).

\emph{Acknowledgments}. The paper was developed thanks to the generous financial support of the FWF (Austrian Science Fund) through projects M 1514-N25 and P 27815-N25, and also under the Italian program ``Rita Levi Montalcini 2013''. The author wants to thank the Kurt G\"{o}del Research Center, the Institute of Discrete Mathematics and Geometry at Technische Universit\"{a}t Wien and the Department of Mathematical, Computer and Physical Sciences at University of Udine for their hospitality, and the Department of Mathematics at Harvard University for the invitation that kickstarted this work.

This paper would not exist without the invaluable help of W. Hugh Woodin, whose input was the foundation of the entire project. The author would also like to thank Rachid Atmai, Scott Cramer, Sakae Fuchino, Mohammed Golshani, Daisuke Ikegami, Toshimichi Usuba, Liuzhen Wu for the many meaningful discussions that helped shape this work. Finally, we are grateful to the referee for their constructive comments and suggestions.

\end{document}